\newcommand{\east}[1]{\ensuremath{\xrightarrow{\ #1\ }}}
\newcommand{\west}[1]{\ensuremath{\xleftarrow{\ #1\ }}}
\newcommand{\north}[1]{\ensuremath{\big \uparrow \!\text{\raisebox{.1ex}{\scriptsize $#1$}}}}
\newcommand{\south}[1]{\ensuremath{\big \downarrow \!\text{\raisebox{.1ex}{\scriptsize $#1$}}}}
\newtheorem{theorem}{Theorem}
\numberwithin{theorem}{section}
\newtheorem{corollary}[theorem]{Corollary}
\newtheorem{prop}[theorem]{Proposition}
\newtheorem{lemma}[theorem]{Lemma}
\newtheorem{conjecture}[theorem]{Conjecture}
\newtheorem*{maintheorem}{Theorem \ref{thm:main}}
\theoremstyle{definition}
\newtheorem{definition}[theorem]{Definition}
\newtheorem{question}[theorem]{Question}
\newtheorem{example}[theorem]{Example}
\newcommand{\defn}[1]{\textbf{#1}}
\newcommand{\ShSSYT}{\mathrm{ShSSYT}}
\newcommand{\SSYT}{\mathrm{SSYT}}
\title{Inequality of a class of near-ribbon skew Schur $Q$ functions}
\author{Maria Gillespie}
\thanks{Partially supported by NSF DMS award number 2054391.} 
\address{Department of Mathematics, Colorado State University, Fort Collins, CO, USA} \email{maria.gillespie@colostate.edu }
\author{Kyle Salois}
\address{Department of Mathematics, Colorado State University, Fort Collins, CO, USA
}
\email{kyle.salois@colostate.edu}
\date{\today}
\begin{document}

\begin{abstract}
	While equality of skew Schur functions is well understood, the problem of determining when two skew Schur $Q$ functions are equal is still largely open.  It has been studied in the case of ribbon shapes in 2008 by Barekat and van Willigenburg, and this paper approaches the problem for \textit{near-ribbon} shapes, formed by adding one box to a ribbon skew shape.  We particularly consider \textit{frayed ribbons}, that is, the near-ribbons whose shifted skew shape is not an ordinary skew shape.  We conjecture, with evidence, that all Schur $Q$ functions of frayed ribbon shape are distinct up to antipodal reflection.  We prove this conjecture for several infinite families of frayed ribbons, using a new approach via the ``lattice walks'' version of the shifted Littlewood-Richardson rule discovered in 2018 by Gillespie, Levinson, and Purbhoo.
\end{abstract}

\maketitle{}

\section{Introduction}

In this paper we provide new results on the open problem of determining when two \textit{skew Schur $Q$ functions} are equal.  The Schur $P$- and $Q$-functions $P_\lambda(x_1,x_2,\ldots)$ and $Q_\lambda(x_1,x_2,\ldots)$, are analogues of the classical Schur functions for \textit{shifted partitions} $\lambda$ (see Figure \ref{fig:shifted}), and are themselves symmetric functions that have many natural connections to representation theory and algebraic geometry. In representation theory, Schur $Q$-functions correspond to induced characters of projective representations of the symmetric group $S_n$ \cite{Stembridge}.  They also correspond to the highest weight representations of the quantum queer Lie superalgebra \cite{GJKK}, and their combinatorics may be described by crystal bases arising in this setting \cite{GHPS,GJKKK,GJKKKss}.   In algebraic geometry, products of Schur $Q$-functions govern the intersection theory of Schubert varieties in the odd orthogonal Grassmannian \cite{Pragacz}.

\begin{figure}
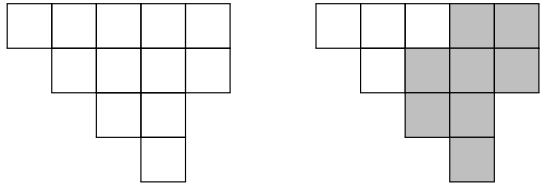

    \centering
    \begin{ytableau}
      \empty & \empty & \empty & \empty & \empty \\
      \none  & \empty & \empty & \empty & \empty \\
      \none  & \none  & \empty & \empty & \none \\
      \none  & \none  & \none  & \empty & \none 
    \end{ytableau}\hspace{1cm}
    \begin{ytableau}
      \empty & \empty & \empty & *(lightgray) \empty & *(lightgray) \empty \\
      \none  & \empty & *(lightgray) \empty & *(lightgray) \empty & *(lightgray) \empty \\
      \none  & \none  & *(lightgray) \empty & *(lightgray) \empty & \none \\
      \none  & \none  & \none  & *(lightgray) \empty & \none   
    \end{ytableau}
    \caption{At left, The shifted partition $(5,4,2,1)$, drawn in English notation.  The rows of boxes starting from the top have $5,4,2,1$ boxes in each respectively, and each successive row starts one step to the right of the previous.  At right, the gray boxes form the skew shifted shape $(5,4,2,1)/(3,1)$ formed by deleting the smaller shifted partition $(3,1)$.}
    \label{fig:shifted}
\end{figure}

Originally defined by Schur \cite{Schur}, the Schur $P$- and $Q$-functions can be defined in multiple equivalent ways, including as the $t=-1$ evaluation of the Hall-Littlewood $P$- and $Q$-polynomials \cite{Macdonald}.  Another is in terms of semistandard shifted tableaux with entries from a ``doubled" alphabet $1'<1<2'<2<3'<3<\ldots$. The latter combinatorial definition of Schur $P$- and $Q$- functions has opened opportunities for further understanding of the functions and their connections with other fields, as the theory of shifted tableaux has developed. 

Sagan \cite{sagan} and Worley \cite{Worley} developed shifted versions of combinatorial tools used in non-shifted tableaux, including the Robinson-Schensted-Knuth bijection, the Knuth equivalence relations, and jeu de taquin sliding moves. Sagan further used these to prove Stanley's conjecture that the straight-shape Schur $P$- and $Q$-functions are Schur-positive with regards to the non-shifted Schur basis $s_{\lambda}$. Building on these, Haiman \cite{Haiman} developed the process of mixed insertion on shifted tableaux, answering several more open questions in this direction. 

For Schur $Q$ functions, the above combinatorial theory gave rise to a generalized definition of Schur $Q$ functions $$Q_{\lambda/\mu}(x_1,x_2,\ldots)$$ for \textit{skew} shifted shapes $\lambda/\mu$, formed by deleting a smaller shifted partition shape $\mu$ from that of $\lambda$ (see Figure \ref{fig:shifted}).  These are known to expand positively in terms of the straight shape Schur $Q$ basis.  The resulting \textit{shifted Littlewood-Richardson coefficients} in this expansion have several known combinatorial rules \cite{Nguyen,GLP,Stembridge}, and in fact coincide with the structure coefficients that arise when multiplying two Schur $P$ functions and expressing the result back in the Schur $P$ basis.  

In the study of skew Schur $Q$ functions, the following natural problem remains largely open.

\begin{question}\label{question}
 When are two skew Schur $Q$ functions equal to each other?  More generally, when is their difference Schur $Q$ positive (when expanded in the straight shape Schur $Q$ basis)?
\end{question}

The natural analog of Question \ref{question} has been studied more thoroughly for the unshifted case of ordinary Schur functions, which similarly arise in representation theory and geometry.  In \cite{VanWilligenburg_Schur}, van Willigenburg characterized the case when a skew Schur function is equal to a straight shape Schur function, finding that $s_{\lambda/\mu}$ and $s_{\nu}$ are equal only when $\lambda/\mu$ and $\nu$ are the same shape, or $180^{\circ}$ rotations of each other. Billera, Thomas, and van Willigenburg \cite{BTvW} determined an exact condition for the equality of ribbon Schur functions.  Reiner, Shaw, and van Willigenburg \cite{ReinerEtAl} expanded on this result, giving further conditions for equality for general shapes, and soon after McNamara and van Willigenburg \cite{MvW} gave a single composition operation that maintains Schur equality.  Similar results for the problem of determining when the difference of two skew Schur functions is Schur positive were given in, for instance, \cite{Christian, KWW, MvW2, Karen}.

In the case of Schur $Q$-functions, Salmasian \cite{Salmasian} found exact criteria for when the $Q$-function $Q_{\lambda/\mu}$ of a shifted skew shape was equal to that of a shifted straight shape Schur $Q$-function $Q_{\nu}$. Barekat and van Willigenburg \cite{BarekatVanWilligenburg} investigated the problem of Schur $Q$-function equality in the case of ribbons, finding a compositional construction that gives families of shapes with equal $Q$-function, and conjecturing that it is a necessary and sufficient condition for equality. However, the remaining  results from the ordinary Schur function case have not yet been replicated for Schur $Q$-functions.

Building off of the results of Barekat and van Willigenburg \cite{BarekatVanWilligenburg}, we examine shifted skew shapes that are \textit{near-ribbons}, defined as follows.

\begin{definition}
A \defn{near-ribbon} is a connected non-ribbon shape for which it is possible to remove one square to form a ribbon.
\end{definition}

We find that this class of skew Schur $Q$-functions is self contained, in the following sense.

\begin{prop}\label{prop:near-ribbons}
If $D$ and $E$ are shifted skew shapes such that $Q_D=Q_E$, and $D$ is a near-ribbon, then $E$ is also a near-ribbon. 
\end{prop}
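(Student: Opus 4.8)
The plan is to extract from the identity $Q_D = Q_E$ enough combinatorial information about the shape $E$ to recognize it as a near-ribbon, exploiting that being a near-ribbon is essentially a local condition: connectedness, the presence of a $2\times 2$ block, and the existence of a single box whose removal destroys every $2\times 2$ block while keeping the shape connected. Since $Q_F$ is homogeneous of degree equal to the number of boxes of $F$, the hypothesis first forces $|D| = |E| =: n$. It then suffices to show that $E$ is connected, that $E$ contains a $2\times 2$ block (so that $E$ is not itself a ribbon), and that $E$ has a box whose removal leaves a connected $2\times 2$-free shape.

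To recover this data from $Q_E = Q_D$ I would use three kinds of invariants. The specializations $Q_F(x_1,\dots,x_k)$ for small $k$ count semistandard shifted tableaux of $F$ using only the smallest few letters, and are sensitive to the short columns and $2\times 2$ blocks of $F$; in particular $\min\{k : Q_F(1^k)\neq 0\}$ controls the length of the longest column of $F$. The coefficients $\langle Q_F, Q_\lambda\rangle$ for small strict partitions $\lambda$ are shifted Littlewood--Richardson coefficients that can be computed directly from $F$ via the lattice-walk rule of Gillespie--Levinson--Purbhoo, giving access to finer local features. Finally, $Q$ is multiplicative over the connected components of a shape, so a disconnected $E$ would make $Q_D$ a nontrivial product of Schur $Q$-functions of strictly smaller shapes; comparing degrees, supports in the $Q_\lambda$-basis, and the above specializations on the two sides rules this out, since a near-ribbon is ``almost a ribbon'' and cannot exhibit the product structure. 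With $E$ connected, the specialization and coefficient data then pin down that $E$ contains a $2\times 2$ block and is exactly one box away from being $2\times 2$-free.

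The main obstacle is the last point, because a connected shifted skew shape containing a $2\times 2$ block need not be a near-ribbon: examples include the ``bowtie'' formed by two $2\times 2$ blocks meeting at a single corner box, a ribbon thickened in two separate places, and any shape containing a $2\times 3$ block. I expect the heart of the proof to be a finite case analysis of the local configurations of $E$ that are consistent with the data recovered above, showing in each non-near-ribbon case that some $Q$-invariant -- a suitable small specialization or the coefficient of a carefully chosen $Q_\lambda$ -- separates it from every near-ribbon with $n$ boxes. Once these configurations are excluded, $E$ must be a near-ribbon.
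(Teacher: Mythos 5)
There is a genuine gap: your proposal never actually produces the invariant that does the work. The final paragraph correctly identifies the crux --- a connected shape with a $2\times 2$ block need not be a near-ribbon --- but then resolves it by asserting that ``some $Q$-invariant \dots separates it from every near-ribbon with $n$ boxes,'' which is a restatement of the proposition, not an argument. The proposed ``finite case analysis of local configurations'' is also not obviously finite in any useful sense: the family of connected, non-near-ribbon shapes of size $n$ containing a $2\times 2$ block grows with $n$, and reducing it to finitely many local patterns each killed by an invariant uniform in $n$ is exactly the content that is missing. A smaller but real error: the claim that $\min\{k : Q_F(1^k)\neq 0\}$ controls the length of the longest column is false for shifted tableaux, since primed letters may repeat down a column (e.g.\ $1',1',\dots,1',1$), so arbitrarily long columns are compatible with $k=1$; that statistic instead measures how many ``ribbon layers'' are needed to exhaust $F$.

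The paper gets the whole proposition from a single, explicitly computable invariant that your sketch circles around but does not name: the lexicographically leading monomial of $Q_F$, which it shows equals the \emph{greedy monomial} $2^r x_1^{m_1}x_2^{m_2}\cdots$ obtained by peeling $F$ into maximal inner ribbon strips ($r$ being the number of connected ribbons in the decomposition). A shape of size $n$ is a near-ribbon exactly when this monomial is $4x_1^{n-1}x_2$: the exponent pattern $x_1^{n-1}x_2$ says $F$ is a ribbon plus one extra box, and the coefficient $4=2^2$ says the greedy filling has precisely two connected pieces, which rules out both the pure ribbon and the ribbon-plus-disconnected-box cases (each of which would give a monomial of the form $cx_1^n$). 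This one leading-term computation simultaneously handles connectedness, the presence of a thick spot, and the ``one box from a ribbon'' condition, with no case analysis over local configurations. If you want to salvage your outline, the missing step is to prove that the leading monomial of $Q_F$ is the greedy monomial and then read off the near-ribbon characterization from it.
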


In this direction, we have found ample computational and theoretical evidence that, remarkably, the subclass of shifted near-ribbons that are not themselves ordinary skew shapes have \textit{distinct} Schur $Q$ functions up to antipodal reflection.  We call these shapes \textit{frayed ribbons}, and they can also be defined as follows.

\begin{definition}
A \defn{frayed ribbon} is a shifted near-ribbon containing two squares on the staircase.
\end{definition}

Two examples of frayed ribbons are shown in Figure \ref{fig:antipodal}.

We state our main conjecture precisely as follows.  Define $D^a$ to be the \textit{antipodal reflection} of a shifted skew shape across the northeast-southwest diagonal, as shown in Figure \ref{fig:antipodal}.

\begin{figure}
    \centering
    \includegraphics{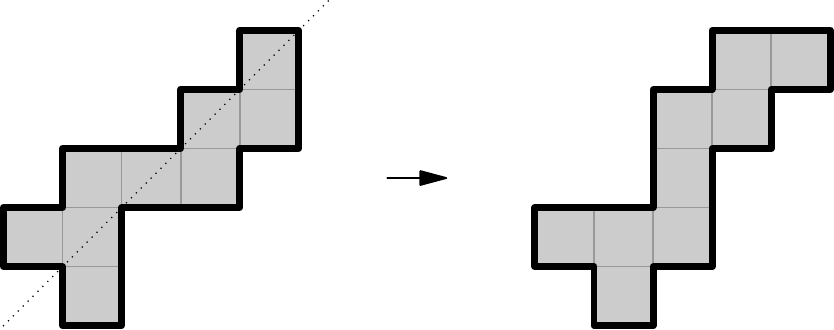}
    \caption{A frayed ribbon $D$ and its antipodal reflection $D^a$.}
    \label{fig:antipodal}
\end{figure}

\begin{conjecture}\label{conj:frayed}
If $D$ and $E$ are frayed ribbons such that $Q_D=Q_E$, then either $D=E$ or $D=E^{a}$. 
\end{conjecture}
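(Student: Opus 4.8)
\medskip
\noindent\textbf{Proof strategy for Conjecture~\ref{conj:frayed}.}
The plan is to show that the map $D\mapsto Q_D$ is injective on frayed ribbons once we identify each $D$ with its antipodal reflection $D^a$; that is, to reconstruct $D$ up to the involution $D\mapsto D^a$ from the expansion in the straight-shape basis, $Q_D=\sum_\nu c^D_\nu\,Q_\nu$. The degree recovers $|D|$ immediately. For the finer invariants I would use the lattice-walk form of the shifted Littlewood--Richardson rule of Gillespie--Levinson--Purbhoo \cite{GLP}, which computes $c^D_\nu$ as a count of certain lattice walks (equivalently, a set of shifted Littlewood--Richardson tableaux in the sense of \cite{Stembridge}). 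The point is that for a near-ribbon $D=\lambda/\mu$ these objects are extremely rigid: the inner shape $\mu$ differs from a staircase by a single box, and the outer boundary has exactly one ``branch box'', namely the fraying box, so the walks carry essentially one binary choice at each turn of the underlying ribbon skeleton together with one additional choice at the branch box. By Proposition~\ref{prop:near-ribbons}, and since antipodal reflection preserves the class of frayed ribbons, everything stays inside the class under consideration.

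Next I would exploit this rigidity to compute $c^D_\nu$ in closed form for a carefully chosen family of test shapes $\nu$ --- the staircases $\delta_k=(k,k-1,\dots,1)$ and the ``near-staircases'' obtained from a $\delta_k$ by inserting one extra part --- and show that these numbers determine both (i) the row-length composition of $D$, read off from the support of $Q_D$ together with the coefficient against the largest $\nu$ in the support, which should equal $1$ and be realized by a unique lattice walk that traces out the composition, and (ii) the position of the fraying box and of the two staircase cells, detected from how $c^D_\nu$ varies across the near-staircases. The crux is then a reconstruction lemma: this combinatorial data pins down $D$ up to antipodal reflection. Combined with the identity $Q_{D^a}=Q_D$, which holds for every shifted skew shape, this would yield the conjecture.

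The hard part --- and the reason the full conjecture remains open --- is controlling cancellation among the lattice walks at the unique branch box, which is precisely the feature separating a near-ribbon from an honest ribbon (where Barekat--van Willigenburg's analysis \cite{BarekatVanWilligenburg} applies cleanly). A priori two genuinely different, non-antipodal frayed ribbons could produce matching walk-counts against \emph{every} $\nu$ through coincidental cancellations at their branch boxes, and I do not see a single sign-reversing involution that disposes of all frayed ribbons at once. For the infinite families proved in this paper the remedy is to restrict the shape of $D$ so that the branch-box walks become enumerable by hand --- for instance when the two staircase cells are adjacent, when the fraying box sits at the end of a row or a column, or when the ribbon part degenerates to a single row, a single column, or a hook --- in which cases the coefficients $c^D_{\delta_k}$ and the near-staircase coefficients admit explicit closed forms (via a direct bijection or a short sign-reversing involution), so that $D$ can be recovered directly and injectivity up to $D\mapsto D^a$ follows on that family.
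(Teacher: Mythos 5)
First, a point of order: the statement you were asked to prove is Conjecture~\ref{conj:frayed}, which the paper itself does not prove --- it is verified by computer up to size $11$ and established only for the families in Theorem~\ref{thm:main} (zero or one turn, or two turns with at most one box between them). Your proposal is likewise not a proof and says so honestly, so there is no complete argument on either side to compare. At the level of strategy, you and the paper agree: recover $D$ (up to $D\mapsto D^a$) from the shifted Littlewood--Richardson coefficients $f^\lambda_{\mu\nu}$ computed via the lattice-walk rule of \cite{GLP}, evaluated against a small family of test contents $\nu$, and your proposed tractable special cases (degenerate ribbon skeleton, few turns) are in the same spirit as the paper's.

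However, two concrete steps in your outline would fail. First, your item (i) claims the row-length composition of $D$ can be read off from ``the coefficient against the largest $\nu$ in the support.'' For a frayed ribbon of size $n$ the lexicographically/dominance-maximal content in the support is always $(n-1,1)$ --- this is exactly the greedy-filling computation of Proposition~\ref{prop:greedy}, which gives leading monomial $4x_1^{n-1}x_2$ and hence coefficient $1$ on $Q_{(n-1,1)}$ for \emph{every} frayed ribbon --- so this datum carries no information about the row composition. (You appear to be importing the unshifted fact that a ribbon Schur function $s_{\lambda/\mu}$ has a unique maximal term $s_{\lambda(\alpha)}$ recording the sorted row lengths; that does not transfer here.) Second, your test family of staircases $\delta_k$ and near-staircases points in the wrong direction: the contents that actually discriminate frayed ribbons are the ones with \emph{few} parts near the top of dominance order. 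The paper probes with $(n-k,k)$, $(n-k-1,k,1)$, and $(n-k-2,k,2)$, and recovers the parameters (number of turns, column height $h$, widths $w_1,w_2$) from the thresholds in $k$ at which these coefficients drop (e.g.\ $4\to 2\to 0$ in Lemmas~\ref{lem:value-024} and~\ref{lem:value-012}, $8\to6/4\to2\to0$ in Lemma~\ref{lem:value-02468}). A minor further slip: for a frayed ribbon $\lambda/\mu$ the inner shape $\mu$ is generally nowhere near a staircase minus a box; the ``frayed'' condition only says $\ell(\lambda)=\ell(\mu)+2$, so two rows start on the staircase. If you redirect your reconstruction lemma toward the two- and three-row test contents and the threshold behavior of their coefficients, your plan becomes essentially the paper's.
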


This conjecture is in sharp contrast to the results for ribbons in \cite{BarekatVanWilligenburg}, in which infinitely many pairs of non-antipodal ribbons, formed by ``composing'' previously equal pairs in different ways, were found to have equal Schur $Q$ functions.  It is therefore surprising that adding the extra square on the staircase appears to distinguish all of the corresponding Schur $Q$ functions (up to antipodal reflection).

We have verified Conjecture \ref{conj:frayed} by computer for all frayed ribbons up to size $11$.  Towards a proof, we use the new combinatorial method based on \textit{lattice walks} developed in \cite{GLP} for computing the shifted Littlewood-Richardson coefficients, which give the decomposition of skew Schur $Q$ functions into straight-shape Schur $Q$ functions.  In particular, we use this new criterion to identify shifted semistandard Young tableaux of certain shapes with ballot reading words, and illustrate characteristics of fillings that contribute to this count. 

Using the method of lattice walks, in combination with analyzing the monomial expansions of shifted Schur $Q$ functions, we prove the following main results towards Conjecture \ref{conj:frayed}.  In the statement below, a \textit{turn} of a frayed ribbon is a square in both a nontrivial row and nontrivial column of the ribbon structure (not including the square adjacent to the two squares on the staircase).

\begin{theorem}\label{thm:main}
If $D$ and $E$ are frayed ribbons with $Q_D =Q_E$, then:
\begin{itemize}
    \item $D$ and $E$ have the same number of turns;
    \item If $D$ and $E$ have no turn or one turn, then $D=E$ or $D=E^a$;
    \item If $D$ has two turns and at most one square between the turns, then $D=E$ or $D=E^a$. 
\end{itemize}
\end{theorem}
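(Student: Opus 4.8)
The plan is to read off from the symmetric function $Q_D$ a short list of combinatorial statistics of the shape $D$, note that each is determined by $Q_D$ (hence shared by any frayed ribbon $E$ with $Q_E=Q_D$), and then verify that within the families named in the three bullets these statistics pin $D$ down up to the antipodal reflection $D\mapsto D^a$.

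I would start with the coarse structure. By Proposition~\ref{prop:near-ribbons}, $E$ is a near-ribbon, and since $E$ is assumed frayed as well, $D$ and $E$ are frayed ribbons of the same size $n=\deg Q_D$. From the support and coefficients of the monomial expansion $Q_D=\sum_\mu a_\mu m_\mu$ I would recover the data of the ambient ``bounding box'' of $D$ (for instance its numbers of rows and columns, which are interchanged by $D\mapsto D^a$): the lexicographically extreme $\mu$ appearing in the support arise from greedily coloring the cells of $D$ with the smallest possible letters of the doubled alphabet, and translate directly into this box data. The first bullet asserts that the number of turns is likewise determined by $Q_D$. Writing $D$ as a ribbon with one box added on the staircase, its number of turns is essentially the number of direction changes in the lattice-path description of the ribbon, and I would isolate it by comparing a carefully chosen family of coefficients --- say $[Q_{(n-2,2)}]Q_D$ and $[Q_{(n-2,1,1)}]Q_D$ against the box data, or the coefficient of a suitable near-staircase monomial. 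I expect this to be elementary but finicky: the turn count is not manifestly a symmetric-function invariant, so it needs a bespoke test coefficient together with a direct count from the combinatorial definition of $Q_{\lambda/\mu}$ (and, for the $Q_\nu$-coefficients, the lattice-walk rule of \cite{GLP}).

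With $n$, the box data, and the number of turns in hand, the shapes in the remaining two bullets form very small families, each described by only two or three non-negative integer parameters (arm and leg lengths of the ribbon part, and the offsets of the bend(s) and of the extra staircase box). For each such family I would compute, via the lattice-walk form of the shifted Littlewood--Richardson rule, a handful of coefficients $[Q_\nu]Q_D$ for well-chosen small $\nu$ (hooks, near-hooks, two-row shapes), express each explicitly as a function of the parameters, and check that the resulting map from parameters to coefficient-tuples is injective modulo exactly the parameter involution realizing $D\mapsto D^a$. Using the already-established invariants to cut down to the relevant sub-family before computing keeps the casework finite and manageable.

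The crux, and the main obstacle, is that last step: the explicit evaluation of the chosen shifted Littlewood--Richardson coefficients by lattice walks for the parametrized families, and the proof that no two non-antipodal members of a family agree on all of them. The ballot and lattice-walk conditions are delicate --- first occurrences primed versus unprimed, the behavior on the main diagonal, and the way the extra frayed box interrupts the ribbon's walk --- so the real work is organizing the analysis so that the needed injectivity is transparent rather than a brute-force parameter chase. A secondary difficulty, as noted, is extracting the turn count purely from $Q_D$ in the first bullet.
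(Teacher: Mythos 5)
Your outline matches the paper's strategy in broad strokes---indeed you correctly guess the key test statistic for the first bullet, the coefficient $[Q_{(n-2,2)}]Q_D$, which the paper proves equals $2k$ where $k$ is the number of turns (Proposition~\ref{prop:distinguish_turns}), and your plan of evaluating a handful of coefficients $[Q_\nu]Q_D$ for small $\nu$ via the lattice-walk rule and checking injectivity on the parameter space is exactly what the paper does for the one-turn and two-turn families. But the proposal is a plan, not a proof: every step that carries mathematical content is deferred. You do not compute $[Q_{(n-2,2)}]Q_D$ (the paper's computation requires a careful case split on whether the topmost turn is inner or outer, and a walk argument showing which of the four prime/unprime choices at each outer turn are ballot); you do not characterize the ballot tableaux of one-turn shapes (the paper needs six structural lemmas, Lemmas~\ref{L_Flap_Column}--\ref{lem:where-3}, to pin these down before it can write the explicit expansion in Corollary~\ref{cor:expansions}); and for the two-turn case you do not identify which $\nu$ actually work (the paper uses $(n-k,k)$ and $(n-k-1,k,1)$ for column height $0$, but needs the different family $(n-k-2,k,2)$ for height $1$, plus a separate argument distinguishing height $0$ from height $1$ via $[Q_{(n-3,2,1)}]$). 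You acknowledge all of this yourself when you say the crux is ``the explicit evaluation of the chosen shifted Littlewood--Richardson coefficients \dots and the proof that no two non-antipodal members of a family agree on all of them.'' That crux is the theorem.

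Two smaller concrete problems. First, $(n-2,1,1)$ is not a strict partition, so $Q_{(n-2,1,1)}$ is not an element of the Schur $Q$ basis and cannot serve as a test coefficient; the shifted setting forces all $\nu$ in the expansion to be strict, which constrains the available tests more than your sketch acknowledges (this is also why the paper must handle degenerate parameter values where a candidate content like $(w_1+1,w_2)$ with $w_1+1=w_2$ fails to be strict). Second, your claim that the ``bounding box'' of $D$ is recoverable from the lexicographic extremes of the monomial expansion is asserted without justification and is not something the paper establishes or uses; the paper's greedy-monomial argument (Proposition~\ref{prop:greedy}) recovers only the lex-leading term, which for a near-ribbon is always $4x_1^{n-1}x_2$ regardless of the shape's proportions, so it distinguishes near-ribbons from other shapes but carries no information about rows versus columns.
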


It is worth noting that while Theorem \ref{thm:main} and computational evidence indicates that all non-antipodal pairs of distinct frayed ribbons have distinct Schur $Q$ functions, there are examples showing that several natural generalizations of Conjecture \ref{conj:frayed} do not hold.  For instance, not all non-antipodal pairs of distinct connected shifted skew shapes having at least two boxes on the staircase have distinct Schur $Q$ functions.  We provide counterexamples to this effect, and to other potential generalizations, in Section \ref{sec:conclusion}. 

The remainder of the paper is structured as follows.  In Section \ref{sec:background}, we give the necessary background for stating the results and proofs in the paper, including the combinatorial definition of skew Schur $Q$-functions and the lattice walk method for determining the shifted Littlewood-Richardson coefficients. In Section \ref{sec:near-ribbons}, we show that skew Schur $Q$-functions are distinguished by their maximal greedy fillings, and that these distinguish the Schur $Q$ functions of near-ribbon shapes from all other classes.   In Section \ref{sec:frayed-ribbons}, we prove Theorem \ref{thm:main}, and in the process we use the lattice-walk rule to completely classify the decompositions of skew Schur $Q$-functions into the straight shape Schur $Q$ basis for certain shapes.  Finally, in Section \ref{sec:conclusion}, we give examples and further observations on the distinctness, equality, or Schur $Q$ positivity of differences for related families of skew shifted shapes.

\subsection{Acknowledgments}

We thank Peter McNamara and Stephanie van Willigenburg for helpful email exchanges and conversations.  Computations in Sage \cite{sagemath} were very helpful in coming up with conjectures and examples throughout this work, and we also thank Jake Levinson for sharing some relevant Sage code.

\section{Background and notation}\label{sec:background}

In this section we outline some necessary definitions and notation for the rest of the paper.

\subsection{Ordinary and shifted skew shapes}

A \defn{partition} is a tuple $\lambda=(\lambda_1,\ldots,\lambda_k)$ of positive integers such that $\lambda_1\ge \lambda_2\ge \cdots \ge \lambda_k$.  We say the partition $\lambda$ is \defn{strict} if it is \textit{strictly decreasing}, that is, $\lambda_1>\lambda_2>\cdots>\lambda_k$.  The \defn{size} of the partition is $|\lambda|=\sum_{i}\lambda_i$, and if $n=|\lambda|$ then we say that $\lambda$ is a partition of $n$.

The \defn{Young diagram} of a partition $\lambda$ is the left-justified array of boxes in which the $i$-th row from the top contains $\lambda_i$ boxes (we use the `English' convention for Young diagrams in this paper).   If the Young diagram of $\mu$ is contained in that of $\lambda$, we write $\lambda/\mu$ for the diagram formed by the boxes appearing in $\lambda$ but not in $\mu$, and we call such a diagram an \defn{(ordinary) skew shape}.  A shape given by a single partition diagram $\lambda$ is called a \defn{straight shape} (though straight shapes are also considered skew, with $\mu$ being the empty partition).

For strict partitions, it is common to ``shift'' their Young diagrams as follows.

\begin{definition}
  The \defn{shifted Young diagram} of a strict partition $\lambda$ is formed by taking the ordinary Young diagram of $\lambda$ and shifting the $i$-th row from the top $i-1$ steps to the right for all $i$.  A \defn{shifted skew shape} is the difference $\lambda/\mu$ of two nested shifted diagrams.
\end{definition}

We can think of shifted diagrams as fitting inside a triangular ``staircase shape''.  A square in a skew shifted diagram is said to be on the \defn{staircase} if it is in the leftmost possible position in its row (see Figure \ref{fig:diagrams}).

\begin{figure}
    \centering
    \includegraphics{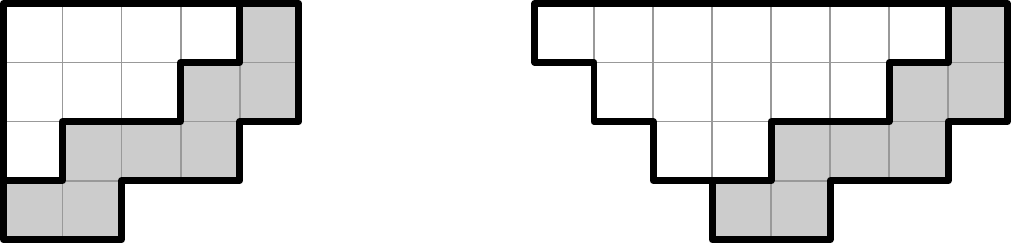}
    \caption{The ordinary skew shape $(5,5,4,2)/(4,3,1)$ at left, and the shifted skew shape $(8,7,5,2)/(7,5,2)$ at right.  Both sets of shaded squares have the same underlying shape, but their indexing is different since the latter is shifted.  In the latter, one box is on the staircase.}
    \label{fig:diagrams}
\end{figure}

A \defn{ribbon} is a connected ordinary skew shape that does not contain a $2\times 2$ box of squares.  The skew diagrams in Figure \ref{fig:diagrams} are both considered ribbons, since even though the latter is a shifted skew shape, its underlying diagram is equivalent to a ribbon.  A \defn{near-ribbon} is a connected non-ribbon shape for which it is possible to remove one square to form a ribbon.   A \defn{frayed ribbon} is a shifted near-ribbon containing two squares on the staircase.

Notice that frayed ribbons are precisely the shifted near-ribbons that are not ordinary unshifted skew shapes.  Two examples are given in Figure \ref{fig:near-ribbons}.

\begin{figure}[t]
    \centering
    \includegraphics{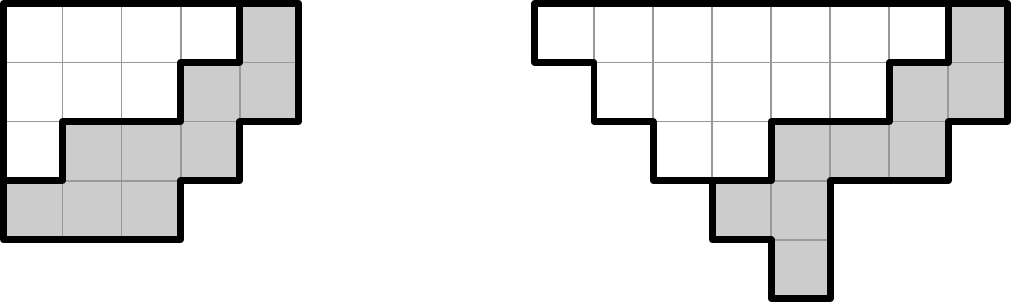}
    \caption{Two near-ribbon shapes, the latter of which is frayed.}
    \label{fig:near-ribbons}
\end{figure}

\subsection{Shifted tableaux and Schur $Q$ functions}

A \defn{(ordinary) semistandard Young tableau}, or SSYT, is a way of filling the boxes of an ordinary skew shape with positive integers such that the rows are weakly increasing from left to right and the columns are strictly increasing from top to bottom.  The analogous definition for shifted shapes makes use of the ``doubled alphabet'' of symbols: \begin{equation}\label{eq:alphabet}
    1'<1<2'<2<3'<3<\cdots.
\end{equation}

\begin{definition}
A \defn{shifted semistandard Young tableau} (ShSSYT) is a filling of a shifted skew shape with primed and unprimed letters from (\ref{eq:alphabet}) such that rows and columns are weakly increasing from left to right and top to bottom, primed letters can only be repeated in columns, and unprimed letters can only be repeated in rows.

We write $\ShSSYT(\lambda/\mu)$ for the set of all shifted semistandard Young tableaux of shape $\lambda/\mu$.  The \defn{reading word} of a shifted tableau is the word formed by concatenating the rows from bottom to top, and the \defn{reading order} of the entries in a tableau is the total order given by the reading word. An example is shown in Figure \ref{fig:ribbon_plus_box}.
\end{definition}

\begin{figure}[b]
    \centering
    
    $$ \begin{ytableau} \none & 1' & 1 & 1 & 1 \\
     \none & 1' & 2 \\ 1' & 1 \\ 1 \end{ytableau} $$
    
    \caption{A shifted semistandard Young tableau of a near-ribbon shape, with reading word $11'11'21'111$. Its monomial is $x_1^8x_2$.  Observe that the first $1^\ast$ and $2^\ast$ in reading order can be changed to either $1'$ or $1$, or $2'$ or $2$, respectively, while maintaining that the filling is a ShSSYT.}
    \label{fig:ribbon_plus_box}
\end{figure}

\begin{definition}[Starred entries] We write $1^\ast$ to denote a letter that is either $1'$ or $1$, $2^\ast$ to denote a letter that is either $2'$ or $2$, etc.
\end{definition}  
The \defn{monomial} associated to a shifted semistandard Young tableau $T$ is $$x^T:=x_1^{m_1}x_2^{m_2}\cdots$$ where $m_i$ is the total number of $i^\ast$ entries in $T$ for each $i$.  The tuple $(m_1,m_2,m_3,\ldots)$ of exponents is called the \defn{content} of $T$. We often write $X$ for the set of variables $x_1,x_2,\ldots$.

\begin{definition}
The \defn{skew Schur $Q$ function} for the skew shape $\lambda/\mu$ is the symmetric function $$Q_{\lambda/\mu}(X)=\sum_{T\in \ShSSYT(\lambda/\mu)}x^T.$$
\end{definition}

Schur $Q$ functions are the natural shifted analog of ordinary Schur functions $s_{\lambda/\mu}$, defined as $$s_{\lambda/\mu}(X)=\sum_{T\in \SSYT(\lambda/\mu)}x^T,$$ where $\SSYT(\lambda/\mu)$ is the set of all ordinary SSYT's of shape $\lambda/\mu$, and $x^T=x_1^{m_1}x_{2}^{m_2}\cdots$ where $m_i$ is the number of times $i$ appears in $T$.

\begin{definition}
The \defn{Schur $P$ function} for a straight shape $\lambda$ is the symmetric function $$P_{\lambda}(X)=2^{-\ell(\lambda)}Q_\lambda(X)$$ where $\ell(\lambda)$ is the number of parts of $\lambda$.  
\end{definition}
Since the first letter of each row of a straight shape ShSSYT may be either primed or unprimed, dividing by $2^{\ell(\lambda)}$ in the definition above may also be interpreted as summing over all ShSSYT's of shape $\lambda$ in which the first letter of each row is unprimed. 

The Schur $Q$ and $P$ functions are both known to be elements of the subring $\Omega$ of the ring of symmetric functions $\Lambda$, which are defined as follows.  (Throughout, we take $\mathbb{Q}$ as the field of coefficients over which our symmetric functions are defined.)

\begin{definition}
 We write $\Lambda(X)$ for the ring of symmetric functions over $\mathbb{Q}$ in the variables $X=\{x_1,x_2,x_3,\ldots\}$.  We also write $X_n=\{x_1,\ldots,x_n\}$ and $\Lambda(X_n)$ for the ring of symmetric polynomials in the finite set of variables $X_n$.
\end{definition}

\begin{definition}
 Let $p_i(X)=x_1^i+x_2^i+\cdots\in \Lambda(X)$ be the $i$-th power sum symmetric function. Then $\Omega(X)$ is the subring of $\Lambda(X)$ generated as an algebra by the odd degree power sums $p_1(X),p_3(X),p_5(X),\ldots$.  
\end{definition}

The Schur $P$ and $Q$ functions $P_\lambda$ and $Q_\lambda$ each separately form a basis of $\Omega(X)$ as a vector space, where $\lambda$ ranges over all straight shape strict partitions.  They also form dual bases with respect to the natural analog of the Hall inner product.  (See \cite{Macdonald}).

\subsection{Antipodal reflections}

In \cite{BarekatVanWilligenburg}, a group of four transformations on \textit{ordinary} skew shapes that preserves the Schur $Q$ function was identified.  The group, isomorphic to $\mathbb{Z}/2\mathbb{Z}\times \mathbb{Z}/2\mathbb{Z}$, is generated by two involutions: ordinary  \textit{transposition} of Young diagrams (i.e., reflecting the skew diagram about the northwest-southeast diagonal), and the \textit{antipodal} map defined below.

\begin{definition}
Given a shifted skew shape $D$, the \defn{antipodal reflection} $D^a$ is the shape formed by reflecting $D$ across the northeast-southwest diagonal.
\end{definition}

The other operation of transposition is only well-defined on shifted skew shapes that happen to also be ordinary skew shapes (as a set of squares), which was sufficient in the paper \cite{BarekatVanWilligenburg} that considered ribbon shapes, since all ribbons are both shifted and ordinary skew shapes.  Here, however, since we are considering frayed ribbons, we will be primarily be using the antipodal map.

In \cite{DeWitt}, it was shown that the antipodal map sends shifted skew shapes to shifted skew shapes and preserves the Schur $Q$ functions.  We state this result here as we will be referring to it frequently throughout the paper.

\begin{prop}[\cite{DeWitt}, Theorem IV.13]\label{prop:antipodal}
Let $D$ be a shifted skew shape. Then $Q_{D} = Q_{D^a}$.
\end{prop}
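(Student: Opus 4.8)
The plan is to adapt the standard ``rotate and complement'' bijection for ordinary skew shapes to the shifted setting, where the antipodal map is reflection across the anti-diagonal. For each $n$ I would construct an explicit content-reversing bijection from the shifted semistandard tableaux of shape $D$ whose entries are all $\le n$ (ignoring primes) to the analogous set for $D^a$, and then finish using the symmetry of $Q_{D^a}$. A preliminary point is to record the set-level fact that the anti-diagonal reflection sends shifted skew shapes to shifted skew shapes, so that $Q_{D^a}$ is even defined: writing $D=\lambda/\mu$ and placing it inside the staircase $\delta_m$ with $m=\lambda_1$, the reflection is the involution sending a box $(i,j)$ to $(m+1-j,\,m+1-i)$; it carries $\delta_m$ to itself, and tracking the first and last occupied columns of each row produces nested strict partitions $\mu^a\subseteq\lambda^a$ with $D^a=\lambda^a/\mu^a$. (This is the shape half of DeWitt's statement and may simply be cited, but it is a short bookkeeping exercise.)

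Now fix $n$ and let $c=c_n$ be the order-reversing involution of the doubled alphabet $1'<1<\cdots<n'<n$ defined by $c(k)=(n+1-k)'$ and $c(k')=n+1-k$; its crucial features are that it is order-reversing \emph{and} interchanges primed with unprimed letters. Define $\Phi_n(T)$ by reflecting the diagram of $T$ across the anti-diagonal and then replacing the entry of each box by its $c$-image. Verifying that $\Phi_n(T)\in\ShSSYT(D^a)$ is the heart of the argument: the reflection turns the rows of $T$ into the columns of $\Phi_n(T)$ and vice versa, reversing the reading direction within each, so applying the order-reversing map $c$ restores the weakly increasing conditions; and since $c$ swaps primed and unprimed letters, the rule ``no primed letter repeats within a row and no unprimed letter repeats within a column'' is also preserved, its two clauses being exchanged. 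Because $c^{2}=\mathrm{id}$ and the reflection is an involution, $\Phi_n$ is a bijection whose inverse is the analogous map from $D^a$ to $D$.

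Each $i^{\ast}$ entry of $T$ is sent to an $(n+1-i)^{\ast}$ entry, so $\Phi_n$ reverses content; writing $(m_1(T),m_2(T),\dots)$ for the content of a tableau, summing monomials and reindexing along $\Phi_n$ gives
\[
Q_D(x_1,\dots,x_n)=\sum_{T}x_1^{m_1(T)}\cdots x_n^{m_n(T)}=\sum_{U}x_1^{m_n(U)}\cdots x_n^{m_1(U)}=Q_{D^a}(x_n,\dots,x_1),
\]
where $T$ runs over tableaux of shape $D$ with entries $\le n$ and $U=\Phi_n(T)$ runs correspondingly over all tableaux of shape $D^a$ with entries $\le n$. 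Since $Q_{D^a}$ is a symmetric polynomial in $x_1,\dots,x_n$, the right-hand side equals $Q_{D^a}(x_1,\dots,x_n)$; letting $n\to\infty$ yields $Q_D=Q_{D^a}$. I expect the main obstacle to be the tableau-validity check for $\Phi_n(T)$ — getting the interaction between the reflection and the primed/unprimed repetition rules exactly right, together with the behaviour of boxes lying on the staircase — with a secondary, purely combinatorial obstacle in the shape computation confirming that the reflected diagram is honestly a shifted skew shape rather than merely a skew diagram.
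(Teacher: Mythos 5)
Your proposal is correct, but note that the paper does not prove this statement at all: it is quoted verbatim from DeWitt's thesis (Theorem IV.13), so there is no in-paper argument to compare against. What you have written is the standard ``reflect and complement'' bijection, and all of its steps check out: the map $c(k)=(n+1-k)'$, $c(k')=n+1-k$ is an order-reversing involution of the doubled alphabet that swaps primed with unprimed letters, so composing it with the anti-diagonal reflection exchanges the two repetition clauses (``primed letters repeat only in columns'' $\leftrightarrow$ ``unprimed letters repeat only in rows'') exactly as needed, and reverses content so that $Q_D(x_1,\dots,x_n)=Q_{D^a}(x_n,\dots,x_1)=Q_{D^a}(x_1,\dots,x_n)$ by symmetry. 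One of the two obstacles you flag is actually vacuous here: the paper's definition of $\ShSSYT$ imposes no special condition on staircase (diagonal) boxes, so nothing extra needs to be checked there. The only inputs you lean on beyond the bijection are the symmetry of $Q_{D^a}$ (standard, and assumed throughout the paper) and the set-level fact that the anti-diagonal reflection of a shifted skew shape is again a shifted skew shape, which you correctly identify as a short bookkeeping computation inside the staircase $\delta_m$; either proving that or citing DeWitt for it is fine. In short, your argument is a legitimate self-contained replacement for the citation.
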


\subsection{Walks, ballot tableaux, and shifted Littlewood-Richardson coefficients}

Any product of Schur $P$ functions can be expressed in terms of the Schur $P$ basis, and any skew Schur $Q$ function can be expressed in terms of the Schur $Q$ basis in $\Omega$.  The shifted Littlewood-Richardson coefficents give a formula for both of these expansions, which are related by the duality between Schur $Q$ and $P$ functions.  In particular, there exist nonnegative integers $f_{\mu\nu}^\lambda$ for each triple of strict partitions $\lambda,\mu,\nu$ with $|\mu|+|\nu|=|\lambda|$, such that
$$P_\mu P_\nu =\sum_{\lambda} f^{\lambda}_{\mu\nu} P_\lambda\hspace{1cm}\text{and}\hspace{1cm} Q_{\lambda/\mu}=\sum_{\nu} f^{\lambda}_{\mu\nu} Q_\nu.$$ 

The coefficients $f^{\lambda}_{\mu\nu}$ are known as the \defn{shifted Littlewood-Richardson coefficients}.  While several combinatorial interpretations of these coefficients are known \cite{Nguyen,Stembridge}, in this paper we will be primarily using the one established in \cite{GLP} via lattice walks.  We recall some definitions from \cite{GLP}.

\begin{definition}
Let $w$ be a word in the alphabet $\{1',1,2',2\}$.  The \defn{$1/2$-walk} of the word $w$ is a lattice walk in the first quadrant using one of the four unit steps \[
\east{~~} \ =\  (1,0)
\qquad \west{~~} \ =\  (-1,0)
\qquad \north{} \ =\  (0,1)
\qquad \south{} \ =\  (0,-1)\,.
\] for each letter of the word as we read from left to right. The walk starts at the origin $(0,0)$, and at the $i$-th step we read $w_i$ and draw the next step of the walk according to
Figure \ref{fig:lattice-walk}, with two cases based on whether or not the step starts on one of the $x$ or $y$ axes.  In particular, any $2$ is an up arrow, any $1'$ is a right arrow, a $1$ is either right if on an axis or down if not, and a $2'$ is either up if on an axis or left if not.  We will generally write the label each step of the walk by the letter $w_i$, so as to represent both the word and its walk on the same diagram.
\end{definition}

\begin{figure}
	\begin{center}
		\includegraphics{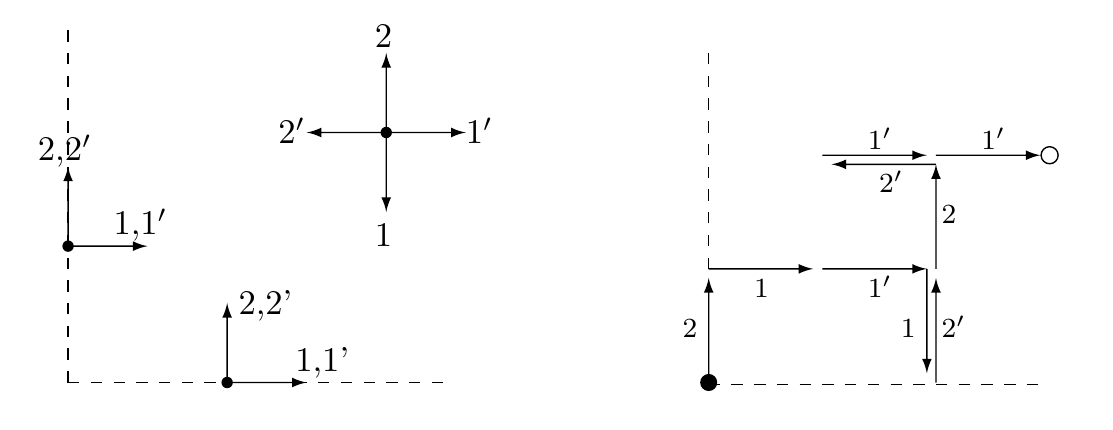}
		\end{center}
\caption{At left, the directions assigned to the letters $1',1,2',2$ in the lattice walk of a word, depending on whether or not the step starts on an axis. At right, the walk for $w=211'12'22'1'1'$.\label{fig:lattice-walk}} \end{figure}

If $w$ is a word in the alphabet $\{i',i,(i+1)',i+1\}$ for any $i$, we similarly define the \defn{$i/(i+1)$-walk} of $w$ by replacing $1^\ast$ with $i^\ast$ and $2^\ast$ with $(i+1)^\ast$ in the above definition.  We can then define the $i/(i+1)$-walk of any word in the doubled alphabet as follows.

\begin{definition}
If $w$ is a word in the alphabet $\{1',1,2',2,3',3,\ldots\}$, the \defn{$i/(i+1)$-walk} of $w$ is the $i/(i+1)$-walk of the subword of $w$ formed by its $i^\ast$ and $(i+1)^\ast$ elements (which we often refer to as the \textbf{$i/(i+1)$-subword}).
\end{definition}

\begin{definition}
A word $w$ is \defn{ballot} if, for every $i$, the $i/(i+1)$-walk of $w$ ends at a point on the $x$ axis.
\end{definition}

This notion of ballotness is a analog of the ``Yamanouchi'' condition for ordinary Littlewood-Richardson coefficients.

\begin{example}\label{ex:ballot}
Suppose $w=212'231'3'1'121'11$.  Then its $1/2$-subword is $212'21'1'121'11$, whose walk is shown below at left.  Its $2/3$-subword is $22'233'2$, whose walk is shown below at right.  Both walks end on the $x$-axis, so the word $w$ is ballot.
\begin{center}
\includegraphics{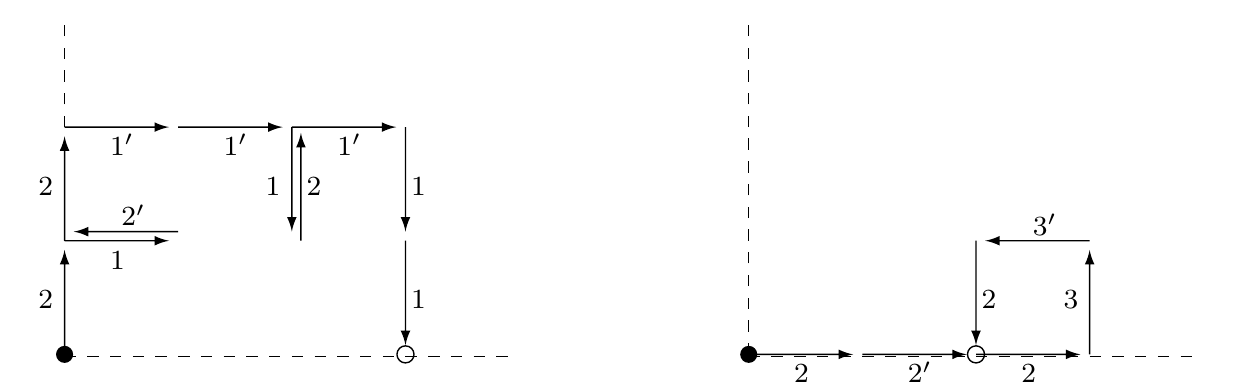}
\end{center}
\end{example}

We now need to recall the definition of a tableau in \textit{canonical form}.

\begin{definition}
A skew shifted semistandard Young tableau is in \defn{canonical form} if the first $i^\ast$ in reading order is unprimed for every $i$.
\end{definition}

Note that if $T$ is semistandard, its first $i^\ast$ in reading order may be changed to being primed or unprimed and always still yield another semistandard tableau.  Thus, the canonical form is simply enforcing this choice to be unprimed.  The shifted jeu de taquin process \cite{sagan,Worley} is only well-defined for skew tableaux in canonical form, which is why it appears in the shifted Littlewood-Richardson rule.

We finally define a \textit{ballot tableau} as follows.

\begin{definition}
A skew shifted Young tableau $T$ is a (shifted) \defn{ballot} tableau if it is semistandard, in canonical form, and its reading word is ballot.
\end{definition}

\begin{example}
The tableau 
$$\begin{ytableau}
  \none & \none & \none & 1' & 1 & 1 \\
  \none & \none & 1' & 1 & 2 \\ 
  \none & \none & 1' & 3' \\
  1 & 2' & 2 & 3 \\
  2
\end{ytableau}$$
is a ballot tableau.  Note that it is in canonical form; the first $1$, the first $2$, and the first $3$ in reading order are all unprimed.  Its reading word is $212'231'3'1'121'11$, which is the ballot word from Example \ref{ex:ballot}.
\end{example}

Shifted ballot tableaux are also sometimes referred to as \textit{shifted Littlewood-Richardson tableaux}, because they enumerate the shifted Littlewood-Richardson coefficients.  We may now state the shifted Littlewood-Richardson rule.

\begin{theorem}[\cite{GLP}, Theorem 1.5]  The shifted  Littlewood-Richardson coefficient $f^\lambda_{\mu\nu}$ is equal to the number of shifted ballot tableaux of shape $\lambda/\mu$ and content $\nu$.
\end{theorem}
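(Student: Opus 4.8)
The plan is to combine the \emph{jeu de taquin form} of the shifted Littlewood--Richardson rule with a proof that the ballot condition of the walk rule is invariant under shifted jeu de taquin. First I would invoke the classical shifted-tableau theory of Sagan and Worley: shifted jeu de taquin slides on a canonical-form tableau are confluent, preserve content, and preserve canonical form, and consequently $f^\lambda_{\mu\nu}$ equals the number of $T\in\ShSSYT(\lambda/\mu)$ of content $\nu$ whose rectification $\mathrm{rect}(T)$ equals $Y_\nu$, the unique canonical straight-shape tableau of content $\nu$ (the one with row $i$ filled by $i$'s). It then suffices to show that, for canonical $T$ of shape $\lambda/\mu$,
$$\mathrm{rect}(T)=Y_\nu \quad\Longleftrightarrow\quad T\text{ has content }\nu\text{ and is ballot.}$$

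For the base case one checks straight from the rules of Figure~\ref{fig:lattice-walk} that $Y_\nu$ is ballot: in its reading word the $i/(i+1)$-subword consists of $\nu_{i+1}$ copies of $i+1$ followed by $\nu_i$ copies of $i$, and the corresponding walk rises to height $\nu_{i+1}$ and then returns to and runs along the $x$-axis (here $\nu_i\ge\nu_{i+1}$), ending on it. A short induction on rows, pushing the canonical-form constraint downward, shows moreover that $Y_\nu$ is the \emph{only} canonical straight-shape ballot tableau of content $\nu$. Granting that ballotness is preserved by slides and reverse slides, both directions of the displayed equivalence follow at once: if $\mathrm{rect}(T)=Y_\nu$ then $T$ has content $\nu$, and, undoing the slides from the ballot tableau $Y_\nu$, $T$ is ballot; conversely, if $T$ is canonical, of content $\nu$, and ballot, then $\mathrm{rect}(T)$ is canonical, straight-shape, of content $\nu$, and ballot, hence equals $Y_\nu$.

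The crux, and what I expect to be by far the main obstacle, is proving that a single shifted jeu de taquin slide neither creates nor destroys ballotness, i.e.\ that the property ``every $i/(i+1)$-walk ends on the $x$-axis'' is a slide invariant. When the two cells exchanged by a slide carry no letter from $\{i',i,(i+1)',i+1\}$ the $i/(i+1)$-subword is literally unchanged, so the problem localizes to slides interacting with two consecutive values. The genuine difficulty, which has no counterpart in the unshifted Yamanouchi/lattice argument, is the doubled alphabet: a slide can transpose a primed and an unprimed copy of the same value and can prime or unprime the leading entry of a row or column, so one must verify that the induced rearrangement of the $i/(i+1)$-subword deforms its walk only locally, or by inserting and deleting balanced excursions, without moving the $x$-axis endpoint. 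I would attack this either by a finite case analysis on the bounded window of letters involved in a slide, phrased on reading words in the style of Worley and Sagan, or -- more structurally -- by importing the shifted tableau crystal operators $e_i,f_i$ of \cite{GLP}, showing they commute with jeu de taquin and that ballot tableaux are precisely the highest-weight elements, so that ballotness of the highest weight of each component is preserved.

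As a lower-tech alternative I would note that one can instead start from Stembridge's already-proven shifted Littlewood--Richardson rule, whose lattice condition scans the reading word from right to left comparing running counts of $i$'s and $(i+1)$'s with a side rule on primed letters. The theorem would then reduce to the purely combinatorial lemma that, on a semistandard canonical-form word, Stembridge's lattice condition holds if and only if every $i/(i+1)$-walk ends on the $x$-axis -- an induction on word length that tracks the current walk position, where the one delicate point is matching the behavior of primed letters on the axes against Stembridge's prime rule. Either route yields the asserted enumeration by shifted ballot tableaux.
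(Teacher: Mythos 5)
This statement is not proved in the paper at all: it is quoted verbatim from \cite{GLP} (their Theorem 1.5) and used as a black box, so there is no internal argument to compare yours against. Judged on its own, your outline is headed in the right direction, and your ``more structural'' variant --- introduce the shifted crystal operators, show they commute with shifted jeu de taquin, and identify ballot tableaux with highest-weight elements --- is essentially the strategy actually used in the cited source. The jeu de taquin reduction in your first paragraph is sound (Worley/Sagan give well-definedness of rectification on canonical-form tableaux and the shape-only dependence needed to express $f^\lambda_{\mu\nu}$ as a count of tableaux rectifying to $Y_\nu$), and your verification that $Y_\nu$ is ballot is correct up to a harmless detail (the first unprimed $i$ after the block of $(i+1)$'s is read on the $y$-axis, so it steps right before the walk descends).

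The gap is that you have only named, not proved, the lemma on which everything rests: that ballotness of every $i/(i+1)$-walk is invariant under a single shifted slide (equivalently, that ballot tableaux are exactly the highest-weight elements and that each skew component rectifies to the straight-shape tableau of its highest weight). Two cautions there. First, ``finite case analysis on a bounded window'' understates the problem: a single slide moves one entry past an entire segment of a row or column, so the induced permutation of the $i/(i+1)$-subword is not confined to a bounded window of letters, and the walk's dependence on whether intermediate positions lie on an axis makes a letter-local argument genuinely delicate. Second, your uniqueness claim needs the stronger statement that a ballot \emph{straight-shape} tableau has content equal to its shape (not merely that $Y_\nu$ is the unique ballot tableau \emph{of shape} $\nu$ and content $\nu$), since a priori $\mathrm{rect}(T)$ could have shape $\sigma\neq\nu$; this follows from the crystal picture but should be stated and proved. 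Your fallback via Stembridge's rule is legitimate but simply relocates the same difficulty into the claimed equivalence of the two lattice conditions. As it stands the proposal is a correct plan with the decisive step outstanding, not yet a proof.
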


\section{Preliminaries on near-ribbons and general shifted shapes}\label{sec:near-ribbons}

We first show how to combinatorially distinguish  Schur $Q$ functions from each other via the leading monomial in lexicographic order.

\begin{definition}
For a shifted skew shape $\lambda/\mu$, define its \defn{greedy filling} to be the labeling formed by:
\begin{itemize}
    \item First placing $1$s in the maximal `inner strip' of ribbons, by placing them in every square that is either in the top row or that shares a corner or edge with the inner shape $\mu$,
    \item Then placing $2$s maximally to form an inner strip of ribbons in the remaining empty squares, namely, in every square sharing a corner or edge with one of the $1$s,
    \item Then placing $3$s maximally in the same way in the remaining squares, and so on.
\end{itemize}
(See Figure \ref{fig:greedy}.) The \defn{greedy monomial} of the shape is the monomial $$2^r x_1^{m_1} x_2^{m_2}\cdots$$ where $r$ is the total number of maximal, connected, uniformly-labeled ribbons formed by the greedy filling, and $m_i$ is the number of squares labeled by $i$ for each $i$.
\end{definition}

\begin{figure}
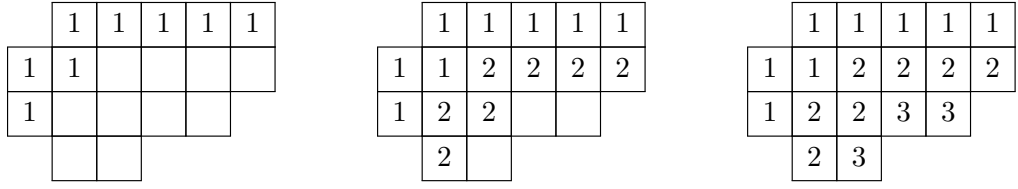

    \centering
    \begin{ytableau} \none & \none & 1 & 1 & 1 & 1 & 1 \\  \none & 1 & 1 & \empty & \empty & \empty & \empty \\  \none & 1 & \empty & \empty & \empty & \empty \\  \none & \none & \empty & \empty  \end{ytableau} \hspace{0.5cm} \begin{ytableau}  \none & \none & 1 & 1 & 1 & 1 & 1 \\  \none & 1 & 1 & 2 & 2 & 2 & 2 \\ \none & 1 & 2 & 2 & \empty & \empty \\ \none & \none & 2 & \empty  \end{ytableau} \hspace{0.5cm} \begin{ytableau} \none & \none & 1 & 1 & 1 & 1 & 1 \\ \none & 1 & 1 & 2 & 2 & 2 & 2 \\ \none & 1 & 2 & 2 & 3 & 3 \\ \none & \none & 2 & 3 \end{ytableau}
    \caption{Forming the greedy filling of a shifted skew shape.  The greedy monomial is $2^4 x_1^8x_2^7x_3^3$.}
    \label{fig:greedy}
\end{figure}

\begin{prop}\label{prop:greedy}
Suppose shifted skew shapes $D$ and $E$ have different greedy monomials.  Then $Q_D\neq Q_E$.
\end{prop}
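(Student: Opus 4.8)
The plan is to recover the greedy monomial of $D$ from the symmetric function $Q_D$ itself, as a ``leading term'' together with its coefficient; the proposition then follows by contraposition. Write $Q_{\lambda/\mu}=\sum_{\alpha}c_\alpha x^\alpha$, where $c_\alpha$ is the number of $T\in\ShSSYT(\lambda/\mu)$ of content $\alpha$, and let $\gamma=(m_1,m_2,\dots)$ be the greedy content and $r$ the number of maximal connected uniform ribbons of the greedy filling. I will establish two claims: (i) $\gamma$ is the lexicographically largest $\alpha$ with $c_\alpha\neq 0$; and (ii) $c_\gamma=2^{r}$. Granting these, suppose $D$ and $E$ have different greedy monomials. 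If their greedy contents $\gamma_D,\gamma_E$ differ, say $\gamma_D>\gamma_E$ lexicographically, then by (i) the coefficient of $x^{\gamma_D}$ in $Q_E$ is $0$ while in $Q_D$ it is $2^{r_D}\neq 0$, so $Q_D\neq Q_E$. If $\gamma_D=\gamma_E=\gamma$ but $r_D\neq r_E$, then by (ii) the coefficient of $x^\gamma$ is $2^{r_D}$ in $Q_D$ and $2^{r_E}$ in $Q_E$, again forcing $Q_D\neq Q_E$.

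The technical core of (i) is a sub-lemma: among all subsets $R\subseteq\lambda/\mu$ that occur as the set of cells carrying $1^\ast$ in some shifted semistandard tableau of shape $\lambda/\mu$, there is a unique one of maximum size, namely the greedy $1$-strip $S_1$. To see $R\subseteq S_1$ for any such $R$, take a hypothetical $(i,j)\in R\setminus S_1$; since $(i,j)\notin S_1$ it is not the topmost cell of its column and is not adjacent to $\mu$, so the cells directly north and directly west of $(i,j)$ lie in $\lambda/\mu$ (the case where the western cell would fall off the staircase is ruled out separately using that $\mu$ is a shifted diagram), and both must again carry $1^\ast$ because labels weakly decrease to the north and to the west; repeating once more produces a $2\times 2$ block of $1^\ast$'s, which is impossible since its two top cells would both be forced to be $1'$, repeating a primed letter in a row. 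One also checks that $S_1$ is itself realizable, i.e.\ that it contains no ``forbidden elbow'' consisting of two cells in a row together with a cell directly below the right one (such a configuration would force a cell to be simultaneously $1'$ and $1$), again by a short argument with the shifted geometry of $\mu$. Claim (i) now follows by induction on $|\lambda/\mu|$: for $T\in\ShSSYT(\lambda/\mu)$, if $m_1(T)<|S_1|$ the content is already smaller than $\gamma$; and if $m_1(T)=|S_1|$ the sub-lemma forces the $1^\ast$-cells to be exactly $S_1$, so deleting them leaves a tableau on $\lambda/(\mu\cup S_1)$ whose greedy filling is that of $\lambda/\mu$ with the $1$'s removed and all labels lowered by one, whence the inductive hypothesis bounds the remaining content by $(m_2,m_3,\dots)$ with equality precisely when $T$ agrees cellwise with the greedy filling.

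For (ii), the induction above shows that the tableaux of content $\gamma$ are exactly the semistandard primings of the greedy filling. Cells with different labels impose no constraint on each other's primes, and cells with the same label in different connected components share no row or column, so the total number of such primings is the product, over all maximal connected uniform ribbons of the greedy filling, of the number of semistandard primings of that ribbon. Each such ribbon is realizable (by the ``no forbidden elbow'' check above, applied at every stage of the greedy iteration), and a realizable connected ribbon has exactly two primings: its cells form a path under edge-adjacency, each horizontal step of the path forces its right endpoint to be unprimed while each vertical step forces its top endpoint to be primed, and an in-degree count shows that for a realizable ribbon exactly one cell of the path is left unconstrained, with two choices of prime. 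Hence $c_\gamma=2^{r}$, completing the argument.

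I expect the main obstacle to be the sub-lemma of claim (i). Away from the staircase, maximality and uniqueness of the greedy $1$-strip reduce to the familiar facts that a $2\times 2$ block cannot be uniformly labeled and that a maximal addable region is a horizontal strip; but near the diagonal the shifted geometry introduces genuinely new cases --- notably the behavior of strips that dip below the inner shape $\mu$ and the realizability of the resulting configurations --- that must be treated with care, and one must also confirm that the greedy filling never creates a ``forbidden elbow'' at any step of the iteration.
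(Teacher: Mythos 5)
Your proposal is correct and takes essentially the same approach as the paper's proof: both identify the greedy monomial $2^{r}x^{\gamma}$ as the lexicographically leading term of $Q_D$ (with the coefficient $2^{r}$ coming from the two admissible primings of each maximal connected uniformly-labeled ribbon) and conclude by comparing leading terms. The paper only verifies that the greedy filling contributes this monomial and asserts its leadingness, while you additionally supply the maximality and uniqueness argument for the greedy $1$-strip via the forbidden $2\times 2$ block and elbow configurations; those added details are sound.
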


\begin{proof}
  Consider the lexicographic ordering of monomials (based on their sequence of exponents in the order $x_1,x_2,x_3,\ldots$).  We claim that the leading term of $Q_D$ with respect to this ordering is the same as the greedy monomial of $D$.
  
  Note that for any label $i$ in the greedy filling and any connected component $R$ of the corresponding disjoint union of ribbons, there are exactly two ways to prime or unprime each of the letters $i$ in the ribbon $R$ so as to make the ribbon semistandard.  In particular, the lower left entry of the ribbon may be either primed or unprimed, and the rest of the entries are determined.  An example corresponding to Figure \ref{fig:greedy} is shown:
  $$ \begin{ytableau} \none & \none & 1' & 1 & 1 & 1 & 1 \\ \none & 1' & 1 & 2' & 2 & 2 & 2 \\ \none & 1^\ast & 2' & 2 & 3^\ast & 3 \\ \none & \none & 2^\ast & 3^\ast \end{ytableau} $$
    where we recall that the $\ast$'s indicate the numbers that may be either primed or unprimed.  This forms a shifted semistandard Young tableau whose contribution to the Schur $Q$ function is the monomial $x_1^{m_1}x_2^{m_2}\cdots $, and since there are two choices for every connected ribbon in the decomposition, this monomial has a coefficient of $2^r$ in $Q_D$. This completes the proof.
\end{proof}

As a corollary, we find that near-ribbon shapes have distinct Schur $Q$ functions from all other types of shapes. 

\begin{corollary}[Proposition \ref{prop:near-ribbons}]
Suppose that $D$ is a near-ribbon. Then if $Q_D=Q_E$ for some shifted skew shape $E$, $E$ must also be a near-ribbon. 
\end{corollary}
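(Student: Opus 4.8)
The plan is to derive this from Proposition~\ref{prop:greedy}. If $Q_D = Q_E$ then $D$ and $E$ have the same leading term in lexicographic order, hence the same greedy monomial; so it suffices to prove that a shifted skew shape is a near-ribbon \emph{if and only if} its greedy monomial has a certain distinguished form, since then $E$, having the greedy monomial of the near-ribbon $D$, must be a near-ribbon as well.

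The first task is to compute greedy monomials. I would begin by showing that a shifted skew shape is a ribbon exactly when its greedy filling is identically $1$, equivalently when its greedy monomial is $2x_1^n$ with $n$ the number of boxes. One implication is immediate from the definition of the greedy monomial. For the converse, given a ribbon $\lambda/\mu$ one argues that every box belongs to $S_1$, the set of boxes in the top row or sharing a corner or edge with $\mu$: if a box $(a,b)$ with $a\ge 2$ were not in $S_1$, then since the ribbon contains no $2\times 2$, at least one of $(a-1,b-1),(a-1,b),(a,b-1)$ lies outside the ribbon, and strictness of $\lambda$ forces any such box to lie in $\mu$ (the one exceptional case, $(a,b)$ on the staircase with a nonempty row above, is handled directly), contradicting $(a,b)\notin S_1$. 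In particular, since a near-ribbon is connected but is \emph{not} a ribbon, its greedy filling has at least one entry equal to $2$ -- concretely the box whose removal turns the near-ribbon back into a ribbon, which sits at king-distance exactly $1$ from $S_1$. I then expect to show this is the \emph{only} non-$1$ entry, so that the greedy monomial of a near-ribbon is exactly $2^2 x_1^{n-1} x_2$: one checks that the deletable box is the unique box outside $S_1$, that it is at distance $1$, and that the remaining boxes -- forming a ribbon -- all still receive label $1$.

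The second task is the converse: that a shape $E$ with greedy monomial $2^2 x_1^{n-1} x_2$ must be a near-ribbon. From the form of the monomial, the label-$1$ region of $E$ is a single connected ribbon $R$ on $n-1$ boxes and the label-$2$ region is a single box $b$; reconstructing $E = R\cup\{b\}$, one checks that $E$ is connected, that $E$ is not itself a ribbon (as $b\notin S_1$), and that $E\setminus\{b\}=R$ is a ribbon, so $E$ is a near-ribbon. This requires understanding precisely which boxes adjacent to a given ribbon can lie at king-distance exactly $1$ from its $S_1$, and checking that adjoining such a box produces no second defect.

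The step I anticipate as the main obstacle is pinning down that a near-ribbon's greedy monomial is \emph{exactly} $2^2 x_1^{n-1}x_2$ and conversely: the irregular geometry of shifted skew shapes with nonempty $\mu$ -- where a row may begin to the left of the row above it, and where diagonal contact with $\mu$ can promote unexpectedly many boxes to label $1$ -- makes both the forward computation and the reverse reconstruction delicate, and a case analysis on the position of the removable box relative to $\mu$ and to the staircase appears unavoidable. If in some boundary case a non-near-ribbon should turn out to share the greedy monomial $2^2 x_1^{n-1}x_2$, the same greedy-filling machinery should still separate it from a near-ribbon through the coefficient of a lower-content monomial such as $x_1^{n-2}x_2^2$, which can be computed directly by classifying the semistandard fillings of the relevant small shapes.
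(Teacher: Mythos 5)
Your proposal is correct and follows essentially the same route as the paper: both deduce from Proposition~\ref{prop:greedy} that $E$ shares the near-ribbon greedy monomial $4x_1^{n-1}x_2=2^2x_1^{n-1}x_2$, conclude that $E$ is a ribbon of $1$s plus a single $2$-labelled box, and rule out the degenerate cases (a ribbon, or a ribbon with a disconnected box) because those would have greedy monomial $cx_1^n$. The paper's version is just a more compressed form of the same argument.
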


\begin{proof}
  If $D$ has size $n$, since it is a near-ribbon its greedy monomial is $4x_1^{n-1}x_2$. Since $Q_D=Q_E$, it follows from Proposition \ref{prop:greedy} that  $4x_1^{n-1}x_2$ is the greedy monomial of $E$, so $E$ must decompose into one connected ribbon of $1$s and a single square containing a $2$ (since the coefficient of $4=2^2$ means there are two connected ribbons formed by the greedy filling).  Therefore shape $E$ can be formed by adding a box to a ribbon, which is either a ribbon itself, a ribbon plus a disconnected box, or a near-ribbon.  In the former two cases, the greedy monomial for $E$ is in fact $cx_1^n$ for some $c$, a contradiction.  Therefore $E$ is a near-ribbon.
\end{proof}

\section{Distinctness for frayed ribbon shapes}\label{sec:frayed-ribbons}

We now turn our attention to Conjecture \ref{conj:frayed} and proving the four statements of Theorem \ref{thm:main}.

\begin{definition}
A \defn{turn} in a frayed ribbon is a sub-diagram of the following form $$\begin{ytableau} \none & \empty \\ \empty & \empty  \end{ytableau} \hspace{0.5cm} \textrm{ or } \hspace{0.5cm} \begin{ytableau} \empty & \empty \\ \empty & \none \end{ytableau}$$
that does not contain either of the two boxes on the staircase.  We call these two types of turns \defn{outer turns} and \defn{inner turns} respectively.
\end{definition}

We now restate Theorem \ref{thm:main} here for the reader's convenience.

\begin{maintheorem}
If $D$ and $E$ are frayed ribbons with $Q_D =Q_E$, then:
\begin{itemize}
    \item $D$ and $E$ have the same number of turns;
    \item If $D$ and $E$ have no turn or one turn, then $D=E$ or $D=E^a$;
    \item If $D$ has two turns and at most one square between the turns, then $D=E$ or $D=E^a$.
\end{itemize}
\end{maintheorem}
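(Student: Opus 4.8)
Since the Schur $Q$-functions $Q_\nu$, over straight strict shapes $\nu$, are linearly independent, $Q_D=Q_E$ is equivalent to equality of every coefficient appearing when $Q_D$ and $Q_E$ are expanded in the $Q_\nu$ basis, equivalently to equality of all monomial coefficients $\langle Q_D,m_\alpha\rangle=\langle Q_E,m_\alpha\rangle$. The plan is to read off, from successively ``lower'' pieces of these expansions, first the number of turns of a frayed ribbon and then, under the hypotheses of the last two bullets, the shape itself. It is convenient to encode a frayed ribbon $D$ by the position of its frayed corner on the staircase together with the sequence of lengths of the maximal horizontal and vertical runs of the underlying ribbon, read from one end to the other; with this encoding the antipodal map $D\mapsto D^{a}$ (Proposition~\ref{prop:antipodal}) simply reverses the sequence, so the desired conclusions amount to saying the encodings of $D$ and $E$ are equal or are reverses of one another.

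\textbf{Step 1: the number of turns is an invariant.} By Proposition~\ref{prop:greedy}, every frayed ribbon of size $n$ has greedy (hence lex-leading) monomial $4x_1^{n-1}x_2$; since $Q_{(n-1,1)}$ has the same leading monomial, and a frayed ribbon has no all-$1^\ast$ filling (the frayed corner already forces an entry $\ge 2$, so $f^\lambda_{\mu,(n)}=0$), one gets $Q_D=Q_{(n-1,1)}+\sum_{\nu\lhd(n-1,1)}c_\nu\,Q_\nu$, and the first shape-dependent information is the coefficient of the next monomial(s) in dominance order. I would compute $\langle Q_D,m_{(n-2,2)}\rangle$, and if needed $\langle Q_D,m_{(n-2,1,1)}\rangle$, by applying the lattice-walk Littlewood--Richardson rule to enumerate the contributing shifted tableaux. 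Each such tableau has mostly $1^\ast$'s and only two ``high'' entries, so it is determined by where those two entries sit; the point to prove is that these placements are in bijection with the turns of $D$ together with a bounded, shape-independent contribution from the frayed corner, so that the coefficient is an affine function of the number of turns $t(D)$ (distinguishing inner from outer turns may require comparing both coefficients above). Hence $Q_D=Q_E$ forces $t(D)=t(E)$.

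\textbf{Step 2: few turns pin down the shape.} Now fix $t:=t(D)=t(E)\in\{0,1,2\}$, with the hypothesis ``at most one square between the turns'' when $t=2$. For each such $t$ there are only finitely many combinatorial types of frayed ribbon — a choice of where the frayed corner lies relative to the turn(s), of whether each turn is inner or outer, and of the run-lengths — and for each type I would use the lattice-walk rule to write the full decomposition $Q_D=\sum_\nu c_\nu Q_\nu$ as an explicit function of the run-lengths; this is the promised complete classification of decompositions for these shapes. Comparing the resulting formulas, I would verify that two frayed ribbons with the same $t$ have the same decomposition precisely when their encodings agree or are reverses, that is, $D=E$ or $D=E^{a}$; the reversal here is exactly the antipodal symmetry, so this reduces to checking that the decomposition formula is injective up to reversal of the run-length sequence.

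\textbf{Main obstacle.} The crux is $t=2$. There the two turns and the frayed corner are close enough that the lattice walks attached to the reading words do not decouple: ballotness couples the $1/2$-walk with the $2/3$-walk (and the $3/4$-walk, for the deeper monomials needed in Step 1), and the axis-dependent step rules make the count of ballot tableaux sensitive both to the inner/outer type of each turn and to the precise placement of the frayed corner between them. Organizing this case analysis so that it is uniform enough to read off injectivity up to reversal is the hard part, and the hypothesis of at most one square between the turns is exactly what keeps the number of interacting configurations, and hence the bookkeeping, finite. A lesser but real subtlety, present already for $t\le 1$, is to confirm that the invariants extracted genuinely separate \emph{every} non-antipodal pair rather than merely most of them, which I would handle by carrying the run-length encoding itself through the computation.
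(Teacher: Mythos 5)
Your overall strategy is the same as the paper's: expand $Q_D$ in the straight-shape $Q_\nu$ basis via the lattice-walk Littlewood--Richardson rule, read the number of turns off the coefficient of $Q_{(n-2,2)}$, and then, for shapes with at most two turns and small column height, compute enough of the decomposition explicitly to separate all non-antipodal pairs. However, as written the proposal is a roadmap rather than a proof: the two claims that carry all the content are flagged as ``the point to prove'' and ``I would verify,'' and neither is actually established. Concretely, in Step 1 the assertion that the relevant coefficient is a function of the total number of turns alone is not automatic and is the delicate part of Proposition \ref{prop:distinguish_turns}: each outer turn contributes $4$ ballot tableaux \emph{except} the topmost one when no inner turn lies above it, which contributes only $2$ (the $1^\ast$ above the $2^\ast$ is forced unprimed for the $1/2$-walk to return to the axis). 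The coefficient equals $2k$ only because $4t_0=2(t_0+t_1)$ when $t_0=t_1$ and $4(t_0-1)+2=2(t_0+t_1)$ when $t_0=t_1+1$; without this identity the inner/outer distribution would leak into the count and your invariant would not be well defined. Your hedge that one ``may need'' the coefficient of $m_{(n-2,1,1)}$ as well signals that this computation was not done.

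In Step 2 the injectivity claim is the entire theorem for $t=1,2$ and is left unverified. Two specific points you would hit on carrying it out: first, for $t=2$ (with the standard orientation) the shape has exactly one inner and one outer turn, so the ``combinatorial types'' reduce to the three run-lengths $w_1,h,w_2$ with $h\le 1$; second, and more seriously, the two-row coefficients $f^\nu_{(n-k,k),\mu}$ alone do not suffice. The paper needs the coefficients of three-row shapes such as $Q_{(n-k-1,k,1)}$ and $Q_{(n-k-2,k,2)}$ (Lemmas \ref{lem:value-012} and \ref{lem:value-024-h1}), locating the unique $k$ where these take a boundary value, to separate $w_1$ from $w_2$ within a fixed height, and a separate argument (via the coefficient of $Q_{(n-3,2,1)}$) to show that a height-$0$ shape cannot match \emph{any} shape of positive height --- which is needed because the first bullet only tells you $E$ has two turns, not that $E$ also has at most one square between them. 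Your plan of ``comparing the resulting formulas'' would in principle cover this, but the choice of which coefficients to compare is where the proof actually lives, and the proposal does not identify it.
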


In the sections below, the following lemma will come in handy.

\begin{lemma}\label{lem:top-row}
The top row of any Littlewood-Richardson tableau has only $1^\ast$ entries, and in fact has at most one $1'$.
\end{lemma}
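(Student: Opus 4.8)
The plan is to deduce the lemma from the very end of the reading word together with the ballot condition, with essentially no auxiliary machinery.

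First I would recall that the reading word $w$ of a shifted tableau is the concatenation of its rows from bottom to top, so $w$ ends with the entries of the top row read left to right; in particular the last letter of $w$ is the rightmost — and hence, since rows are weakly increasing, the largest — entry $e$ of the top row. The heart of the argument is to show $e\in\{1',1\}$. Suppose not, so $e=c^\ast$ with $c\ge 2$, and examine the $(c-1)/c$-walk of $w$. In this walk a letter $c$ always contributes an up step $\north{}$, while a letter $c'$ contributes an up step $\north{}$ if the walk is currently on one of the axes and a left step $\west{}$ otherwise. The final step of the $(c-1)/c$-walk is exactly the one produced by $e$. If $e=c$ this step is $\north{}$, landing at height $y\ge 1$; if $e=c'$ this step is either $\north{}$ (landing at $y\ge 1$) or else $\west{}$, which by the rule is only taken from a point with $y>0$ and does not change $y$, so again the walk ends at height $y>0$. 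Either way the $(c-1)/c$-walk fails to return to the $x$-axis, contradicting ballotness. Hence $e\in\{1',1\}$, and since the top row is weakly increasing, every entry of the top row is at most $e$, hence lies in $\{1',1\}$, i.e. is a $1^\ast$. (If the top row happens to be empty the claim is vacuous.)

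For the sharpened statement that the top row contains at most one $1'$, I would simply invoke semistandardness: primed letters may be repeated only within columns, so no single row — in particular not the top row — can contain two copies of $1'$.

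I do not expect a serious obstacle here; the only point needing a little care is the case analysis in the $c'$ subcase of the walk rule, namely whether the walk sits on or off an axis just before the final step, but this is routine. It is worth noting that the argument uses only the definition of the reading word, the weak monotonicity of rows, and the ballot condition — canonical form plays no role — so the same reasoning would apply verbatim to any ballot word arising from a weakly row-increasing shifted filling.
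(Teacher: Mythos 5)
Your proof is correct and rests on the same key observation as the paper's: a $c^\ast$ step in the $(c-1)/c$-walk can never land on the $x$-axis, so the last letter of a ballot reading word must be a $1^\ast$, and semistandardness then forces the whole top row to be $1^\ast$ with at most one $1'$. The paper phrases this as a chain over all $i$ (the last $i^\ast$-or-$(i+1)^\ast$ letter is an $i^\ast$) while you argue directly by contradiction on the largest entry of the top row, but this is the same argument in substance.
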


\begin{proof}
  By the ballot condition on the $1/2$-walk, the last $1^\ast$ or $2^\ast$ in the word must be a $1^\ast$, since no $2^\ast$ arrow can ever end on the $x$ axis.  Similarly the last $2^\ast$ in any ballot reading word comes after the last $3^\ast$, and so on, meaning that the last letter in reading order is $1^\ast$.  Thus, by the semistandard condition, the entire top row consists of $1^\ast$ entries, with at most one $1'$ at the start of the row.
\end{proof}

\subsection{Distinguishing based on number of turns}

We begin by proving the first statement of Theorem \ref{thm:main}.

\begin{prop}\label{prop:distinguish_turns}
Suppose $D$ is a frayed ribbon shape of size $n$ with $k$ turns. Then the coefficient of $Q_{(n-2,2)}$ in the expansion of $Q_D$ is $2k$. 
\end{prop}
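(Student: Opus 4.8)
The plan is to compute the shifted Littlewood-Richardson coefficient $f^{(n-2,2)}_{\mu,\nu}$ where $\lambda/\mu = D$ and $\nu = (n-2,2)$, by directly enumerating shifted ballot tableaux of shape $D$ and content $(n-2,2)$. Such a tableau has $n-2$ entries equal to $1^\ast$ and $2$ entries equal to $2^\ast$, with the remaining... wait, $n-2+2 = n = |D|$, so every box is either a $1^\ast$ or a $2^\ast$; there are no $3$s or higher. By Lemma \ref{lem:top-row} the top row is all $1^\ast$'s with at most one $1'$. The ballot condition on the $1/2$-walk requires the walk (which here uses the entire reading word) to return to the $x$-axis; since there are exactly two $2^\ast$ steps, and a $2$ is always an up step while a $2'$ is up on an axis and left off it, and the walk must return to the $x$-axis, the two $2^\ast$'s must be "balanced" by $1$ steps (down moves) — so the placement of the two $2$s is highly constrained: essentially they must sit in positions that form a vertical domino or are otherwise forced by semistandardness into at most two columns.

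The key steps, in order: (1) Observe that since the content is $(n-2,2)$, the two $2^\ast$ boxes must, by the shifted semistandard column/row rules, either lie in a single column (a vertical domino $2',2$) or be arranged so the reading word is ballot; analyze the $1/2$-walk to see that the ballot condition forces the two $2$-entries to be the two boxes of some "turn" configuration $\begin{ytableau} \none & \empty \\ \empty & \empty \end{ytableau}$ or $\begin{ytableau} \empty & \empty \\ \empty & \none \end{ytableau}$ in $D$ — i.e., at an inner or outer turn — together with a valid $1^\ast$-filling of the rest. (2) Show that for each of the $k$ turns of $D$ there is exactly one placement of the two $2^\ast$ boxes "at" that turn that is compatible with being a skew shifted shape filling, and that the two staircase boxes and the box adjacent to them can never host the $2$s (this is why the definition of turn excludes the square adjacent to the staircase pair). (3) For a fixed choice of turn, count the semistandard fillings: the rest of the shape gets $1^\ast$'s, and canonical form forces the first $1$ in reading order to be unprimed; the remaining freedom is in priming exactly one box in a column — I claim this gives exactly $2$ valid ballot tableaux per turn (the factor of $2$ coming from the two ways to prime-or-unprime the $2^\ast$-domino, or equivalently from a choice of $1'$ versus $1$ at a forced location), yielding $2k$ total.

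The main obstacle will be step (2)–(3): carefully verifying via the lattice-walk picture that a turn is *exactly* the local configuration that permits a ballot placement of the two $2$s, and that each turn contributes exactly $2$ — neither more nor fewer. In particular I must rule out placements where the two $2^\ast$'s are a horizontal domino (these break the "unprimed letters repeat only in rows... wait, repeat only in rows is fine for a horizontal domino — so I must instead check the ballot/semistandard interaction), and placements straddling the frayed part of the shape, and I must handle the edge effects near the two staircase squares, which is precisely where the "not adjacent to the staircase pair" clause in the definition of turn is used. I also need to confirm that when $D$ has no turns the coefficient is $0$, consistent with $2k = 0$. Once the bijection "turns $\leftrightarrow$ admissible $2$-domino placements" is pinned down and the local factor of $2$ is justified by the prime/unprime ambiguity of the lower-left box of a uniformly-labeled strip (as in the proof of Proposition \ref{prop:greedy}), the count $2k$ follows immediately.
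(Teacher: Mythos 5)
Your overall strategy (enumerate ballot tableaux of shape $D$ and content $(n-2,2)$, locate the two $2^\ast$'s, and count local choices) is the same as the paper's, but the central structural claim in your steps (1)--(2) is false, and the local multiplicity in step (3) does not match what actually happens. The two $2^\ast$'s do \emph{not} form a domino sitting at a turn. One of them is always forced into the single box of the bottom row (the ``frayed'' box on the staircase): that box is first in reading order, so by canonical form its entry is unprimed, and it cannot be a $1$ since the row above it consists of $1^\ast$'s; hence it is a $2$. That box is excluded from every turn by definition. The \emph{other} $2^\ast$ must occupy a box with no box of $D$ to its right or below it; apart from the end of the top row (ruled out by Lemma \ref{lem:top-row}), the only such boxes are the corners of \emph{outer} turns. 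Inner turns can never host a $2^\ast$, since their corner box has boxes of $D$ both to its right and below it, which would have to contain $1^\ast$'s, violating semistandardness. So there is no bijection ``turns $\leftrightarrow$ admissible placements''; the placements are indexed by outer turns only.

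Moreover, the local count is not $2$ per turn. For a fixed outer turn there are four semistandard choices (the $2^\ast$ primed or unprimed, and the $1^\ast$ directly above it primed or unprimed), and the ballot condition preserves all four exactly when some unprimed $1$ occurs after the $2^\ast$ in reading order, which happens precisely when an inner turn lies above that outer turn; for the topmost outer turn, in the case where the topmost turn of $D$ is outer, only two of the four survive. Writing $t_0,t_1$ for the numbers of outer and inner turns, the paper's count is $4t_0=2(t_0+t_1)=2k$ when $t_0=t_1$, and $4(t_0-1)+2=2(t_0+t_1)=2k$ when $t_0=t_1+1$. The appearance of the \emph{total} number of turns $k$ in the answer is an artifact of this arithmetic, not of a turn-by-turn contribution of $2$, so the verification you defer to steps (2)--(3) would fail as stated. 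To repair the argument you need (i) the forced $2$ in the frayed box, (ii) the restriction of the second $2^\ast$ to outer-turn corners, and (iii) the $4$-versus-$2$ case analysis governed by whether an inner turn follows the outer turn in question.
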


\begin{proof}
We assume without loss of generality, by Proposition \ref{prop:antipodal}, that the second-to-bottom row of $D$ has more than two squares.  First, note that in a ballot tableau of shape $D$ with content $(n-2,2)$, the entry in the bottom row cannot be $1^\ast$ by semistandardness, and cannot be $2'$ by the canonical form condition, so it must be a $2$.   Moreover, the other $2^\ast$ must be either in the corner of one of the outer turns or at the end of the top row, or else there would be a $1^\ast$ to its right or below it, contradicting semistandardness.  But the $2^\ast$ cannot be at the end of the top row by Lemma \ref{lem:top-row}.  Thus the second $2^\ast$ must be in the corner of an outer turn.

 Now, if $t_0$ is the number of outer turns and $t_1$ is the number of inner turns, we have $t_0+t_1=k$ and either $t_0=t_1$ (if the topmost turn is an inner turn) or $t_0=t_1+1$ (if the topmost turn is an outer turn).  See Figure \ref{fig:turns} for each such case.

\begin{figure}
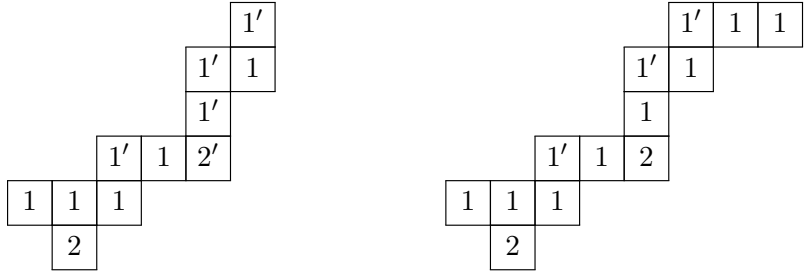

    \centering
    \begin{ytableau}
       \none & \none & \none & \none & \none & 1' \\
       \none & \none & \none & \none & 1' & 1 \\
       \none & \none & \none & \none & 1' \\
       \none & \none & 1' & 1 & 2' \\
       1 & 1 & 1 \\
       \none & 2
    \end{ytableau} \hspace{2cm}
    \begin{ytableau}
       \none & \none & \none & \none & \none & 1' & 1 & 1 \\
       \none & \none & \none & \none & 1' & 1 \\
       \none & \none & \none & \none & 1 \\
       \none & \none & 1' & 1 & 2 \\
       1 & 1 & 1 \\
       \none & 2
    \end{ytableau}
    \caption{At left, a ballot tableau of a frayed ribbon with three outer turns and two inner turns, that is, $t_0=3$ and $t_1=2$ in the notation of the proof of Proposition \ref{prop:distinguish_turns}.  At right, a ballot tableau of a frayed ribbon with $t_0=t_1=3$.}
    \label{fig:turns}
\end{figure}

For each outer turn of $D$, there are exactly four semistandard fillings containing a $2^\ast$ in the corner of that turn; in particular, the $2^\ast$ may be either $2$ or $2'$, and the square above it may be either $1$ or $1'$.  We now check which of these are ballot.  The reading word is of the form $$211(1^\ast \cdots 1^\ast)2^\ast1^\ast (1^\ast\cdots 1^\ast) $$ where the strings $(1^\ast\cdots 1^\ast)$ in parentheses have a mix of primed and unprimed $1$ entries that are uniquely determined by the shape of $D$.  Just before the second $2^\ast$, the walk is on the $x$ axis, and the $2^\ast$ lifts it to a position just above the $x$ axis.  In order for the walk to return to the $x$ axis in the end, it is necessary and sufficient that an unprimed $1$ appears after the $2^\ast$, as shown in the walk below (which corresponds to the tableau at left in Figure \ref{fig:turns}). 
\begin{center}
\includegraphics{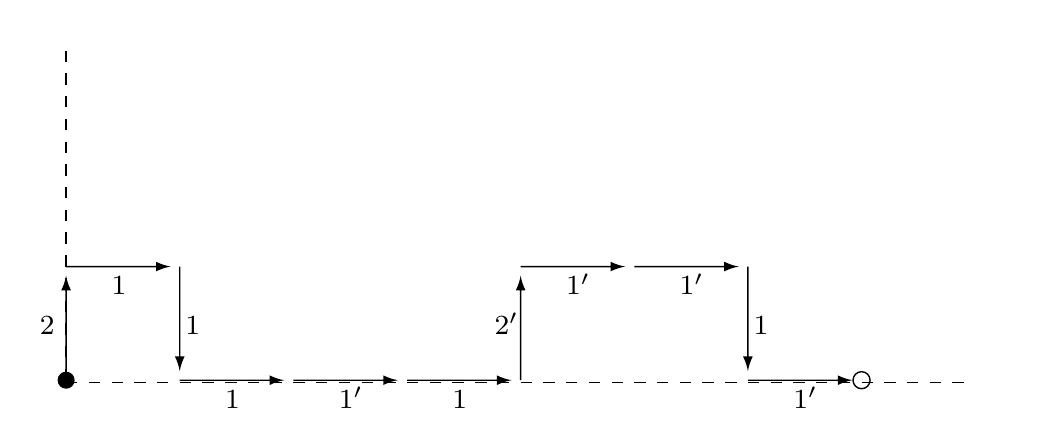}
\end{center}
This is guaranteed to happen by semistandardness if there is an inner turn after the outer turn containing $2^\ast$, but if not, then the only way it is guaranteed is if the $1^\ast$ just following the $2^\ast$ is unprimed (since all other $1^\ast$s in the final column must be primed by semistandardness).

Thus, if $t_0=t_1$ then we have an inner turn after all outer turns, and so each of the $t_0$ outer corners contributes four ballot tableaux.  So the coefficient is $$4t_0=2(t_0+t_1)=2k.$$  If instead $t_0=t_1+1$, then the $t_0-1$ lowest outer corners contribute four ballot tableaux, but the topmost outer corner contributes only two since the $1^\ast$ above it must be unprimed.  Thus we have a coefficient of $$4(t_0-1)+2=2t_0+2t_0-2=2t_0+2(t_1+1)-2=2(t_0+t_1)=2k.$$  Therefore, in all cases, the coefficient is $2k$ as desired.
\end{proof}

As a corollary we obtain the first statement of Theorem \ref{thm:main}.

\begin{corollary}\label{cor:turns}
Let $D$ and $E$ be frayed ribbon shapes for which $Q_D=Q_E$. Then $D$ and $E$ have the same number of turns.
\end{corollary}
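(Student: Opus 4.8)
The plan is to deduce this immediately from Proposition~\ref{prop:distinguish_turns}. First I would record the (easy but necessary) observation that the hypothesis $Q_D = Q_E$ already forces $|D| = |E|$: every monomial $x^T$ appearing in a skew Schur $Q$ function of shape $\lambda/\mu$ has total degree $|\lambda/\mu|$, since each box of $T$ contributes exactly one factor $x_i$, so $Q_D$ is homogeneous of degree $|D|$, and two homogeneous symmetric functions of different degrees cannot coincide. Write $n = |D| = |E|$. Since a frayed ribbon has at least five boxes (the smallest being the shifted shape $(3,2)$), the composition $(n-2,2)$ is a genuine strict partition, so $Q_{(n-2,2)}$ is a well-defined element of the Schur $Q$ basis.

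Next I would invoke the fact, recalled in the background section, that the Schur $Q$ functions $\{Q_\nu\}$ as $\nu$ ranges over strict partitions form a basis of $\Omega(X)$. Expanding both sides of $Q_D = Q_E$ in this basis and comparing coefficients, the coefficient of $Q_{(n-2,2)}$ must be the same on both sides. By Proposition~\ref{prop:distinguish_turns}, that coefficient equals $2k_D$ on the left and $2k_E$ on the right, where $k_D$ and $k_E$ denote the number of turns of $D$ and of $E$ respectively. Hence $2k_D = 2k_E$, so $k_D = k_E$, which is exactly the assertion. There is no real obstacle in this step: all of the combinatorial substance has already been carried out in Proposition~\ref{prop:distinguish_turns}, and this corollary is just the remark that the integer $2k$ extracted from the $Q_{(n-2,2)}$-coefficient is an invariant of the Schur $Q$ function, hence shared by any two frayed ribbons with equal Schur $Q$ functions.
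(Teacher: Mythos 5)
Your argument is exactly the paper's: the corollary is stated immediately after Proposition~\ref{prop:distinguish_turns} with no further proof, the intended reasoning being precisely that the coefficient of $Q_{(n-2,2)}$ in the basis expansion is $2k$, so equal Schur $Q$ functions force equal numbers of turns. Your added checks (homogeneity forcing $|D|=|E|$, and $n\ge 5$ making $(n-2,2)$ a strict partition) are correct and harmless elaborations of what the paper leaves implicit.
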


\subsection{Frayed ribbons with one turn}

We now focus on the second statement of Theorem \ref{thm:main}.  Note that there are only two frayed ribbon shapes with no turns -- the straight shifted shape $(n-1,1)$ and its antipodal reflection -- so the statement holds when there are no turns.  

We therefore consider the case of one turn. We begin by proving several lemmas about the structure of all ballot tableaux of one-turn shapes.  Throughout this entire section we let $T$ be a ballot tableau of a frayed ribbon shape with one turn.    Since we are considering antipodal pairs, we also assume without loss of generality that the shape of $T$'s frayed ribbon goes ``to the right and then up'' from the frayed part, that is, its second-to-bottom row has more than two squares.  

\begin{lemma}\label{L_Flap_Column}
In $T$, let $k^\ast$ be the largest entry of the rightmost column.  Then for all $1\le i<k$, at least one $i^\ast$ appears in the column, including exactly one unprimed $i$. 
\end{lemma}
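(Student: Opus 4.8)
The plan is to exploit the fact that, for a one-turn frayed ribbon in the standard orientation, the rightmost column occurs as a contiguous suffix of the reading word, so that ballotness of the $i/(i+1)$-walks constrains that column directly. First I would pin down the shape. Under our running assumption the frayed ribbon goes right and then up, so it consists of a single box in the bottom row (row $t+1$, say); the row $t$ above it, which contains the two staircase boxes, the horizontal arm, and the turn; and a vertical flap occupying the rightmost column $C$ in rows $1,\dots,t$. Since a ribbon contains no $2\times2$ block, the flap has width one, so each of the rows $1,\dots,t-1$ contains only its column-$C$ box. Writing $a_1,a_2,\dots,a_t$ for the entries of column $C$ read from top to bottom, the reading word of $T$ is therefore $W = U\cdot a_t a_{t-1}\cdots a_1$, where $U$ records row $t+1$ together with the part of row $t$ to the left of column $C$; in particular the last $t$ letters of $W$ are exactly the entries of column $C$ read from the bottom up. Semistandardness makes the values of $a_1,\dots,a_t$ weakly increasing; Lemma \ref{lem:top-row} forces $a_1$ to be $1^\ast$ since it lies in the top row, and $a_t = k^\ast$ by the definition of $k$. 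Hence the values of $a_1,\dots,a_t$ run weakly from $1$ up to $k$, so in particular values $1$ and $k$ both occur in column $C$.

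Next I would show that no value is skipped. Suppose some value in $\{2,\dots,k-1\}$ does not occur in column $C$, and let $j$ be the largest such value; then value $j+1$ does occur in column $C$, either because $j$ was the largest omitted value among $\{2,\dots,k-1\}$, or because $j+1=k$ and $a_t$ realizes value $k$. Since column $C$ contains $(j+1)^\ast$ entries but no $j^\ast$ entries and column $C$ is a suffix of $W$, the $j/(j+1)$-subword of $W$ is nonempty and its last letter is a $(j+1)^\ast$. But, exactly as in the proof of Lemma \ref{lem:top-row}, a $(j+1)^\ast$ step of a $j/(j+1)$-walk never lands on the $x$-axis, so the $j/(j+1)$-subword of a ballot word must end in $j^\ast$ whenever it is nonempty. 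This contradiction shows that column $C$ contains every value $1,2,\dots,k$, which is the first assertion.

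Finally I would control the primes. Fix $i$ with $1\le i<k$. Column $C$ contains at least one $i^\ast$ by the previous paragraph, and since unprimed letters cannot repeat in a column it contains at most one unprimed $i$. Suppose, for contradiction, that every $i^\ast$ in column $C$ equals $i'$. Because values increase down column $C$, its value-$i$ boxes lie above its value-$(i+1)$ boxes, so reading the column from the bottom up the $(i+1)^\ast$ entries of column $C$ precede its $i^\ast$ entries, and the latter are the final letters of the $i/(i+1)$-subword of $W$. After processing all letters of that subword up to and including the (nonempty) block of $(i+1)^\ast$ entries coming from column $C$, the walk stands at height at least $1$, since every $(i+1)^\ast$ step ends with positive height. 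The remaining letters are the $i^\ast$ entries of column $C$, all equal to $i'$ by assumption, each of which is a rightward horizontal step and therefore does not change the height. Thus the $i/(i+1)$-walk ends at height at least $1$, contradicting ballotness. Hence column $C$ contains exactly one unprimed $i$ for every $i<k$.

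The step I expect to take the most care is the first one: verifying that the one-turn, right-then-up hypothesis genuinely forces the rightmost column to be a width-one flap lying in rows $1,\dots,t$ with nothing else in rows $1,\dots,t-1$, so that it forms an uninterrupted suffix of the reading word. Once that structural picture is secured, the two walk arguments are short and essentially forced.
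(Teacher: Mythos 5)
Your proof is correct and takes essentially the same approach as the paper's: both rest on the observation that the rightmost column is a contiguous suffix of the reading word with values weakly decreasing in reading order, so that after each block of $(i+1)^\ast$ letters the $i/(i+1)$-walk sits strictly above the $x$-axis and only an unprimed $i$, necessarily lying in the column, can bring it back down. (The paper packages presence and unprimedness into a single descending induction from $k$ rather than your two separate walk arguments, and one immaterial slip: only one of the two staircase boxes lies in row $t$, the other being the frayed box in row $t+1$.)
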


\begin{figure}
    \centering
     \begin{ytableau}     
    \none & \none & \none & \none & \none & \none & 1 \\
    \none & \none & \none & \none & \none & \none & 2'  \\   \none & \none & \none & \none & \none & \none & 2' \\   \none & \none & \none & \none & \none & \none & 2  \\ 
    \none & \none & \none & \none & \none & \none & 3  \\
    \none & \none & \none & \none & \none & \none & 4' \\ 
    1 & 1 & 1 & 1 & 1 & 1 & 4 \\ \none & 2      \end{ytableau} 
    \hspace{2cm}
    \begin{ytableau}     
    \none & \none & \none & \none & \none & \none & 1' \\
    \none & \none & \none & \none & \none & \none & 1  \\   \none & \none & \none & \none & \none & \none & 2' \\   \none & \none & \none & \none & \none & \none & 2'  \\ 
    \none & \none & \none & \none & \none & \none & 2'  \\
    \none & \none & \none & \none & \none & \none & 2 \\ 
    1 & 1 & 1 & 1 & 1 & 1 & 3 \\ \none & 2      \end{ytableau}
    \caption{At left, an example of a tableau $T$ that satisfies the conditions of statements of Lemmas \ref{L_Flap_Column}, \ref{L_Flap_Row}, and \ref{L_Flap_Box} At right, a tableau $T$ that also satisfies Lemmas \ref{L_Flap_Corner}, \ref{L_FLap_OneTwo}, and \ref{lem:where-3}.}
    \label{fig:rf_oneturn_ex}.  
\end{figure}

\begin{proof}
First, since $T$ is semistandard, the entries of the column are weakly increasing from top to bottom.  In particular, the $k^\ast$ entries in this column form a consecutive string of the reading word, then the $(k-1)^\ast$ entries are consecutive after them, and so on.

Since $T$ is ballot, the $(k-1)/k$-walk returns to the $x$-axis.  Just after reading the string of $k^\ast$'s in the column, the lattice walk cannot be on the $x$-axis (since it never can be after a $k^\ast$ step), and the only arrow that can move the walk downwards is an unprimed $k-1$.  Thus there is an unprimed $k-1$ in the column.  This holds for all pairs $i-1$ and $i$ for $1<i\leq k$, so the column must contain nonempty strings of each $i$ for $1\leq i\leq k$, with an unprimed entry guaranteed for $1\leq i<k$. Since $T$ is semistandard, there is at most one unprimed $i$ in the column for each $i$ as well (with all the $i'$ entries occurring above it).
\end{proof}

\begin{lemma}\label{L_Flap_Row}
In $T$, the long row has entries $1,\ldots,1,k^\ast$ for some $k$.
\end{lemma}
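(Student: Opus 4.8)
The plan is to locate the reading word of $T$ near the frayed corner precisely, and then use ballotness of the lattice walks to forbid large entries in the long row. Orient $T$ so that its long row is the second-to-bottom row $b_0,b_1,\dots,b_\ell$ with $\ell\ge 2$ (read left to right, $b_0$ on the staircase); then the bottom row must be a single box directly below $b_1$ (a longer bottom row would create a $2\times 2$ block), and the rightmost column is $b_\ell$ together with a nonempty column of boxes standing directly above it. Write $v_i$ for the entry of $b_i$ and $v_{\mathrm{bot}}$ for the bottom box. Every box outside the rightmost column is read before every box of that column, and $b_\ell$ is the column's bottom, hence maximal, entry, so $v_\ell=k^\ast$ with $k$ as in Lemma~\ref{L_Flap_Column}. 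It remains to prove $v_0=\dots=v_{\ell-1}=1$, all unprimed.

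First I would nail down the small entries. The box $v_{\mathrm{bot}}$ is the first letter of the reading word, so canonical form gives $v_{\mathrm{bot}}\ne 1'$; and $v_{\mathrm{bot}}=1$ is impossible, for then the column relation would force $v_1=1'$ and hence (by weak increase along the row) $v_0=1'$, violating that primes cannot repeat in a row. So $v_{\mathrm{bot}}\ge 2'$, which makes $v_0$ the first $1^\ast$ of the reading word whenever $v_0$ is a $1^\ast$; canonical form then gives $v_0\ne 1'$. By weak increase along the long row, it therefore suffices to rule out $v_{\ell-1}\ge 2'$.

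Suppose $v_{\ell-1}\ge 2'$ and let $c\ge 2$ be the value of the leftmost box $b_{i_0}$ with $v_{i_0}\ge 2'$, so $i_0\le\ell-1$ and $v_0=\dots=v_{i_0-1}=1$. I would now derive a contradiction from the $(c-1)/c$-walk. Since every value-$c$ box outside the rightmost column is read before the column, the last value-$c$ letter lies in the column, and reading the column upward it is immediately followed by the column's value-$(c-1)$ block, which by Lemma~\ref{L_Flap_Column} contains exactly one unprimed $c-1$, with only strictly smaller values — invisible to this walk — after it. A value-$c$ step never ends on the $x$-axis, and the single unprimed $c-1$ is the only later move that can go down, so ballotness pins the walk at height exactly $1$ right after the last value-$c$ letter. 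Walking back across the column's value-$c$ block — which reads $c,c',\dots,c'$ from the bottom up (the unprimed $c$ exists: by Lemma~\ref{L_Flap_Column} if $c<k$, and if $c=k$ because then $b_{i_0},\dots,b_\ell$ all have value $k$, so $b_\ell$ is an unprimed $k$ in the column) — forces the walk to height $0$ just \emph{before} that block. But the part of the $(c-1)/c$-subword preceding that block consists of $v_{\mathrm{bot}}$ (only if its value is $c-1$ or $c$), then the long row's value-$(c-1)$ letters (none when $c\ge 3$; the $i_0$ unprimed $1$'s when $c=2$), then the long row's value-$c$ letters, which form $c',c,\dots,c$ with at most one primed entry; a short check shows this prefix ends at height $\ge 1$ — the value-$c$ letters act as up-steps (the possible leading $c'$ included), and for $c=2$ the intervening $1$'s cannot prevent that. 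This contradicts height $0$, so no such $c$ exists, and $v_0=\dots=v_{\ell-1}=1$.

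The step I expect to be the crux is the middle one: that ballotness forces the $(c-1)/c$-walk to height $0$ just before the column's value-$c$ block. This uses Lemma~\ref{L_Flap_Column} in an essential way to guarantee exactly one unprimed $c-1$ after the last value-$c$ letter, and it requires ruling out the walk touching the $y$-axis while crossing the column's value-$c$ and value-$(c-1)$ blocks — touching it would stall the descent and leave the final height positive. The case $c=2$ runs slightly differently from $c\ge 3$ (value-$1$ letters can move right rather than up), and one must also check the minor point that the column genuinely contains an unprimed copy of $c$; both are straightforward once the structure above is in place.
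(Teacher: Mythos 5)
Your proof is correct and takes essentially the same approach as the paper's: both examine the $(c-1)/c$-walk (the paper's $(i-1)/i$-walk) and use the single unprimed $c-1$ guaranteed in the column by Lemma \ref{L_Flap_Column} to show the walk cannot return to the $x$-axis once a value-$c$ entry with $c\ge 2$ sits in the long row. Your backward bookkeeping (forcing height $0$ just before the column's value-$c$ block rather than tracking the walk forward to height $\ge 2$), your uniform treatment of the cases $c<k$ and $c=k$ (which the paper splits into two paragraphs), and your explicit handling of the bottom box via canonical form are presentational refinements of the same argument.
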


\begin{proof}
As in the previous lemma, let $k^\ast$ be the entry in the bottom right corner. First, assume for contradiction that one of the entries, other than the rightmost entry, is $i^\ast$ for $1<i< k$. Then the $(i-1)/i$-walk reaches a point above the $x$-axis just after this $i^\ast$. Since $T$ is semistandard, there is no $(i-1)$ in the reading word until after the unprimed $i$ in the column guaranteed by Lemma~\ref{L_Flap_Column}. After the unprimed $i$, the walk is at least two steps above the $x$-axis, but by Lemma~\ref{L_Flap_Column}, there is only one more instance of an unprimed $i-1$, so the walk does not return to the $x$-axis. This contradicts that $T$ was ballot, and thus every entry in the long row is either $1^\ast$ or $k^\ast$.

If some entry other than the bottom right entry is a $k^\ast$, then since $T$ is semistandard, the bottom right entry is an unprimed $k$. Then, after the bottom right entry of $k$, the $(k-1)/k$-walk is at least two steps above the $x$-axis, and there is only one unprimed $k-1$, and thus one step down, in the remainder of the walk. Therefore the lattice walk for the subword of $(k-1)^\ast$ and $k^\ast$ entries does not return to the $x$-axis, contradicting the ballotness of $T$. 

Thus every entry in the long row, other than the bottom right entry, is $1^\ast$. Since $T$ is in canonical form, the first $1$ must be unprimed, and so by semistandardness the long row has entries $1,\ldots,1,k^\ast$.
\end{proof}

\begin{lemma}\label{L_Flap_Box}
In $T$, the entry in the square in the bottom row is a $2$.
\end{lemma}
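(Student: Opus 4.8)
The plan is to determine the bottom-row entry $b$ of $T$ by forcing it with each of the three defining conditions on a ballot tableau in turn: canonical form makes $b$ unprimed, semistandardness makes $b\ge 2$, and ballotness makes $b\le 2$. First I would note that, as the bottom row is a single square and is read first, $b$ is the very first letter of the reading word, hence the first occurrence of its own value; the canonical form condition then forces $b$ to be an \emph{unprimed} letter, say $b=j$. Since the long row has more than two squares (recall our standing assumption that the second-to-bottom row of $T$ has more than two squares), the cell directly above $b$ is the second cell of the long row and so is not its bottom-right corner; by Lemma~\ref{L_Flap_Row} that cell holds an unprimed $1$. Semistandardness of the length-two column through these two cells then rules out $b=1$ and $b=1'$, so $j\ge 2$. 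It remains to rule out $j\ge 3$, which is where the lattice-walk criterion enters.

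So suppose $j\ge 3$, and consider the $(j-1)/j$-walk of the reading word of $T$. By Lemma~\ref{L_Flap_Row}, the only cells of $T$ outside the flap (the rightmost column) are $b$ itself and the long-row cells, which carry $1$'s together with a single $k^\ast$ at the corner; since $j\ge 3$, neither $j$ nor $j-1$ equals $1$, so every $j^\ast$ and every $(j-1)^\ast$ of $T$ other than $b$ lies in the flap, plus the corner itself in the cases $j=k$ or $j=k+1$. Reading the rows from bottom to top traverses the single-cell flap rows \emph{upward} from the corner, so the flap's higher-valued blocks are read before its lower ones; hence in the $(j-1)/j$-subword every $j^\ast$ comes before every $(j-1)^\ast$, and $b$ is the first $j^\ast$. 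Finally, within the flap column each value $i\le k$ has at most one unprimed occurrence, sitting at the bottom of its block (by Lemma~\ref{L_Flap_Column} for $i<k$, and by semistandardness of the column for $i=k$, whose block ends at the corner); so, listed in reading order, the $(j-1)^\ast$'s are a single unprimed $(j-1)$ — or none — followed only by copies of $(j-1)'$.

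With this structure in hand the walk computation is uniform. The initial letter $b=j$ takes the walk from the origin to $(0,1)$, and each subsequent $j^\ast$, being read on the $y$-axis, is an up-step; so after the last $j^\ast$ the walk sits at $(0,q)$ with $q\ge 1$. The remaining letters are the $(j-1)^\ast$'s: an unprimed $(j-1)$ read on the $y$-axis is a right-step, and every $(j-1)'$ is a right-step, so the walk now runs due east at constant height $q\ge 1$ and never returns to the $x$-axis, contradicting ballotness. Hence $j=2$, so $b=2$. The boundary cases fall under the same computation: if $j>k+1$ the subword is just the letter $b$ and ends at $(0,1)$, and if $j=k$ or $j=k+1$ the corner is merely the first extra $j^\ast$ (respectively $(j-1)^\ast$) in reading order and leaves the pattern above intact.

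The step I expect to take the most care is the second paragraph: reading off precisely the order in which the flap's entries appear in the reading word, and verifying that in the two special cases $j=k$ and $j=k+1$ the bottom-right corner behaves as claimed, so that the $(j-1)^\ast$ letters genuinely consist of at most one unprimed letter followed only by primed ones — which is exactly what forces every one of their steps to point east. Once that is secured, no further case analysis or computation is needed.
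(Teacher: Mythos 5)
Your proof is correct and takes essentially the same route as the paper's: canonical form forces the entry to be unprimed, semistandardness together with Lemma~\ref{L_Flap_Row} rules out a $1$, and ballotness of the $(j-1)/j$-walk (using Lemmas~\ref{L_Flap_Column} and~\ref{L_Flap_Row}) rules out $j\ge 3$. If anything, your treatment is slightly more complete than the paper's, which says ``the lattice walk begins with two up steps'' and thus tacitly assumes the column contains $i^\ast$ entries, glossing over the cases $j=k+1$ and $j>k+1$ that you handle explicitly.
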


\begin{proof}
Since $T$ is ballot and hence in canonical form, any entry in the square in the bottom row must be unprimed, since it is first in reading order.  If the entry were a $1$, then there would be a vertical adjacency of two unprimed $1$s by Lemma \ref{L_Flap_Row}, contradicting that $T$ is semistandard.  

Suppose that the entry in the bottom row is $i$ for $i>2$.  Then in the lattice walk for the subword of $(i-1)^\ast$ and $i^\ast$ entries, by Lemma~\ref{L_Flap_Row}, there is no $(i-1)^\ast$ entry between the entry in the bottom row and the $i^\ast$ entries in the column, so the lattice walk begins with two up steps. Then by Lemma~\ref{L_Flap_Column}, there is at most one more down step in the lattice walk from the single unprimed $i-1$, and so the walk does not return to the $x$-axis, contradicting that $T$ is ballot. Thus the entry in the bottom row is a 2.
\end{proof}

See the first diagram in Figure \ref{fig:rf_oneturn_ex} for an example of a tableau that satisfies the three lemma statements above.

\begin{lemma}\label{L_Flap_Corner}
In $T$, the entry in the corner of the outer turn is at most 3. 
\end{lemma}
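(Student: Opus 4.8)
The plan is to argue by contradiction using the lattice-walk ballot condition, building on the three preceding lemmas. Suppose the entry in the corner of the outer turn is $i^\ast$ for some $i \ge 4$. By Lemma \ref{L_Flap_Row}, the long bottom row is $1,\ldots,1,k^\ast$, so this corner square sits in the second-to-bottom row (the "horizontal arm" going right from the fray, then up the rightmost column). First I would locate the corner entry relative to the rightmost column: the corner of the outer turn is the square where the horizontal arm meets the vertical column, so it is in fact the bottom of the rightmost column, meaning $i^\ast = k^\ast$ and the corner entry equals the bottom-right entry. Wait — I must be careful about which turn is meant: in a one-turn frayed ribbon with second-to-bottom row long, the single turn is an outer turn whose corner is the square directly above the rightmost entry of the long row, i.e.\ the bottom square of the flap column. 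So the "corner of the outer turn" is precisely the bottom entry of the rightmost column, which is the $k^\ast$ of Lemma \ref{L_Flap_Row}. Thus the claim is equivalent to showing $k \le 3$.

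The main step is then to examine the content forced by ballotness. By Lemma \ref{L_Flap_Column}, the rightmost column contains at least one $i^\ast$ for each $1 \le i \le k$, with exactly one unprimed $i$ for $1 \le i < k$. Combined with Lemma \ref{L_Flap_Box} (bottom-row square is a $2$) and Lemma \ref{L_Flap_Row} (long row is $1,\ldots,1,k^\ast$), the reading word of $T$ is completely determined up to the primed/unprimed choices in the column: reading bottom-to-top it is $2,\,1,1,\ldots,1,k^\ast,\,(\text{column entries bottom-to-top})$, where the column, read in reading order (bottom to top), is a weakly decreasing-in-value run $k^\ast,\ldots,(k-1)^\ast,\ldots,\ldots,1^\ast$ — actually in reading order the column is read top-to-bottom within the tableau but the rows are concatenated bottom-to-top, so the long row precedes the column, and within the column the reading word is... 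I would pin down this subtlety carefully, but the upshot is that the $k/(k+1)$-walk and $(k-1)/k$-walk each see only the entries in the column plus the bottom-row $2$ (if $k=2$) or just column entries (if $k \ge 3$).

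Now I run the walks. For the pair $(k-1)/k$: the $k^\ast$ entries form a consecutive block (all of them in the column, except the one in the long row which comes first in reading order), each contributing an up-step or, if on an axis, the first one could differ — but by Lemma \ref{L_Flap_Row} the long-row $k^\ast$ comes before any $(k-1)^\ast$, and a $k^\ast = 2^\ast$-type symbol in this relabeled alphabet is always an up-step unless on the $x$-axis where a $k$ is... this is exactly the analysis in the proof of Lemma \ref{L_Flap_Row}: there is at most one unprimed $k-1$ in the column, giving at most one down-step, so the walk cannot return to the $x$-axis if there are $\ge 2$ up-steps before it, which happens once $k$ appears at least twice (long row plus column), i.e.\ always. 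The contradiction I actually want is finer: I expect the real obstruction is that with $k \ge 4$, the $(k-1)$-entries and $k$-entries are forced into a configuration where the $(k-1)/k$-subword is $(k)^\ast(k)^\ast\cdots(k-1)\cdots$ with insufficient down-steps — essentially the same mechanism as Lemmas \ref{L_Flap_Row} and \ref{L_Flap_Box}. The hard part will be handling the boundary cases $k=3$ versus $k=4$ cleanly: I must show $k=4$ genuinely fails while $k=3$ survives (the right-hand tableau in Figure \ref{fig:rf_oneturn_ex} has corner entry $3$, confirming $k=3$ is achievable), so the argument cannot be a crude "too many up-steps" count — it needs to track that for $k=3$ the extra down-step from the bottom-row $2$ is unavailable to the $(k-1)/k = 2/3$ walk but the single unprimed $2$ in the column suffices, whereas for $k \ge 4$ no such rescue exists because the bottom-row square only ever holds a $2$. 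I would formalize this by writing out the $(k-1)/k$-subword explicitly from Lemmas \ref{L_Flap_Column}--\ref{L_Flap_Box}, counting up-steps (number of $k^\ast$: at least $2$, from long row and column) against down-steps (number of unprimed $k-1$: exactly $1$ by Lemma \ref{L_Flap_Column}, with no contribution from the bottom row since its entry is $2 < k-1$ when $k \ge 4$), and observing the walk ends at height $\ge 1$, contradicting ballotness. Hence $k \le 3$.
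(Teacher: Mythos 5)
Your identification of the corner as the bottom entry $k^\ast$ of the rightmost column, and your overall plan (contradiction via the $(k-1)/k$-walk using the structure forced by Lemmas \ref{L_Flap_Column}--\ref{L_Flap_Box}), match the paper's proof. However, the step where you ``count up-steps against down-steps'' contains two genuine errors that happen to cancel. First, the claim that there are at least two $k^\ast$ entries (``from long row and column'') double-counts the corner: it is a single square that is simultaneously the last entry of the long row and the bottom of the rightmost column, and nothing forces a second $k^\ast$ above it. Second, and more seriously, the unique unprimed $k-1$ is \emph{not} a down-step. The $(k-1)/k$-subword begins $k, k', \ldots, k'$: the corner is the first $k^\ast$ in reading order and is unprimed by the canonical form condition, and any remaining $k^\ast$ are primed and lie above it in the column. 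Hence the walk climbs the $y$-axis to $(0,m)$, where $m\ge 1$ is the number of $k^\ast$ entries. The unprimed $k-1$ is then read while the walk sits on the $y$-axis, so by the position-dependent walk rules it is a \emph{right} step, and every subsequent $(k-1)'$ is also a right step. The walk therefore ends at height $m\ge 1$ having taken no down-steps at all --- this is the paper's actual contradiction.

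Your tally of ``at least $2$ up versus exactly $1$ down'' would yield no contradiction in the perfectly possible case $m=1$ (one up, one alleged down, ending at height $0$), so the argument as written does not close; it appears to succeed only because of the unjustified ``at least two $k^\ast$'' count. To repair it, replace the step-counting with the observation that after the $k^\ast$ block the walk is pinned to the $y$-axis, forcing all remaining steps of the $(k-1)/k$-walk to point rightward.
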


\begin{proof}
Suppose the entry in the lower right corner is $i^\ast$ for $i>3$, and consider the $(i-1)/i$-walk. Since $i>3$, there is no instance of $(i-1)^\ast$ or $i^\ast$ in the word before the entry from the lower right corner by Lemmas \ref{L_Flap_Row} and \ref{L_Flap_Box}. Since $T$ is ballot, the first $i$ is unprimed, so the lower right entry is $i$. Then the $(i-1)/i$-subword has the form $i,i',\ldots,i',i-1,i-1',\ldots,i-1'$, with $k$ copies of $i^\ast$ for some $k$. However, after the string of $i^\ast$s, the walk has taken $k$ steps up the $y$-axis, followed by a right step for the $i-1$ entry, and right steps for the remaining $i-1'$ entries. Thus the walk does not return to the $x$-axis, contradicting that $T$ is ballot, and hence the lower right entry is at most $3$. 
\end{proof}

\begin{lemma}\label{L_FLap_OneTwo}
In $T$, there are at least as many $1$s in the long row as there are $2^\ast$s in $T$.
\end{lemma}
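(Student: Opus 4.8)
The plan is to work directly with the $1/2$-walk of the reading word of $T$, combining the structural constraints of Lemmas~\ref{L_Flap_Row}, \ref{L_Flap_Box}, and~\ref{L_Flap_Column}. We may assume $k\ge 2$, where $k^\ast$ is the largest entry of the rightmost column; otherwise, by Lemma~\ref{L_Flap_Row} the long row consists entirely of $1$'s and the only $2^\ast$ in $T$ is the bottom square (Lemma~\ref{L_Flap_Box}), so the inequality is immediate. Write $\ell$ for the length of the long row, so that by Lemma~\ref{L_Flap_Row} this row is $1,\dots,1,k^\ast$ with exactly $\ell-1$ entries equal to $1$ (all unprimed, since the first $1^\ast$ in reading order is the first entry of the long row and $T$ is in canonical form); note $\ell\ge 3$ because the long row has more than two squares. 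Every $2^\ast$ of $T$ is either the bottom square or lies in the rightmost column (the long row contributes no $2^\ast$ apart from its corner, which is itself the bottom cell of the rightmost column), so if $m$ denotes the number of $2^\ast$ entries in the rightmost column then $T$ has exactly $1+m$ entries equal to $2^\ast$. Since the long row has $\ell-1$ ones, the lemma is exactly the inequality $m\le\ell-2$, and this is what I would prove.

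The first step is to put the $1/2$-subword of the reading word of $T$ into a normal form. Reading bottom to top, this subword begins with the $2$ of the bottom square, then the $\ell-1$ ones of the long row, and then the $2^\ast$ and $1^\ast$ entries of the rightmost column, read from the corner upward. Since columns are weakly increasing and unprimed letters cannot repeat in a column, the $2^\ast$ entries of that column form a contiguous block, and the first of them in reading order is either the corner itself (when $k=2$) --- which is read while the walk is on the $x$-axis, hence is an up-step regardless of priming --- or the unique unprimed $2$ guaranteed by Lemma~\ref{L_Flap_Column} (when $k\ge 3$); in either case semistandardness, which forbids two unprimed $2$'s in one column, forces the remaining $m-1$ of these column $2^\ast$'s to be $2'$'s. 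Likewise all $1^\ast$ entries of the rightmost column are read after all of its $2^\ast$ entries, and by Lemma~\ref{L_Flap_Column} their reading order begins with the unique unprimed $1$ of the column and continues with $1'$'s. Hence the $1/2$-subword of $T$ is
\[
2,\ \underbrace{1,\dots,1}_{\ell-1},\ (\text{one up-step}),\ \underbrace{2',\dots,2'}_{m-1},\ 1,\ \underbrace{1',\dots,1'}_{a-1}
\]
for some $a\ge 1$ (the last block possibly empty).

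The final step is to trace this walk. After the leading $2$ and the $\ell-1$ ones, the walk is at $(\ell-2,0)$ on the $x$-axis, using $\ell\ge 3$; the following up-step moves it to $(\ell-2,1)$, and the $m-1$ copies of $2'$ then move it left, one unit at a time, until it reaches the $y$-axis, after which any remaining $2'$'s move it straight up. If $m\le\ell-2$, the walk never reaches the $y$-axis during this stretch, ending at $(\ell-1-m,\,1)$ with positive first coordinate; then the trailing unprimed $1$ is read off the axes, stepping the walk down to the $x$-axis, and the remaining $1'$'s leave it there, so the walk legitimately ends on the $x$-axis and nothing is contradicted. If instead $m\ge\ell-1$, then after exactly $\ell-2\le m-1$ of the $2'$ steps the walk is at $(0,1)$, and any further $2'$'s only raise its height, so when the $2'$ block ends the walk sits on the $y$-axis at height at least $1$. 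From there, an unprimed $1$ on an axis is a right-step and $1'$ is always a right-step, so the remaining $1^\ast$'s cannot lower the walk, and it never returns to the $x$-axis. This contradicts the ballotness of $T$, so $m\le\ell-2$, as required.

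I expect the main obstacle to be the normal-form step: carefully justifying, from weak increase along columns and the one-unprimed-per-column rule together with Lemma~\ref{L_Flap_Column}, that in the reading word the unique unprimed $1$ of the rightmost column precedes all of its $1'$'s and that the first of the column's $2^\ast$'s to be read is an up-step while all later ones are $2'$'s. Once the reading word is in this form, the walk-tracing is a brief finite case analysis, and its entire content is the asymmetry of the lattice-walk rule on the boundary --- a $2'$ on the $y$-axis moves \emph{up} while a $1$ on the $y$-axis moves \emph{right} --- which is exactly what makes an over-supply of $2'$'s in the flap column incompatible with the walk returning to the $x$-axis.
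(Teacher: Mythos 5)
Your proof is correct and follows essentially the same route as the paper's: both trace the $1/2$-walk of the reading word and show that an excess of $2^\ast$ entries drives the walk onto the $y$-axis at positive height, where the single remaining unprimed $1$ of the flap column steps right rather than down, so the walk cannot return to the $x$-axis. Your write-up is merely more explicit about the normal form of the $1/2$-subword; the mechanism is identical.
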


\begin{proof}
Consider the subword of $T$ containing $1^\ast$s and $2^\ast$s. From Lemma~\ref{L_Flap_Box}, we have that the bottommost square contains a 2, and from Lemma~\ref{L_Flap_Row}, the remaining $2^\ast$ entries are in the rightmost column. Suppose $T$ contains more $2^\ast$ entries than $1^\ast$ entries in the second row, and say the number of $1^\ast$ entries in the second row is $j$. Then, following the initial $2$ and subsequent string of $1$ entries of length $j$, the lattice walk is at $(j-1,0)$ (since the second $1$ is a down step). Then there is a $2$ and at least $j-1$ entries of value $2'$ following the initial string of $1$s, which bring the path left to the $y$-axis. Then since only the first of the remaining $1^\ast$s can be unprimed, the lattice walk contains no further down steps, and does not return to the $x$-axis. Thus there are at least as many $1^\ast$s in the second row as $2^\ast$s in $T$.
\end{proof}

\begin{lemma}\label{lem:where-3}
In $T$, there is at most one $3^\ast$, and if there is one, then it is unprimed and occurs in the corner of the outer turn. 
\end{lemma}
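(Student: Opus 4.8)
The plan is to combine the structural description of $T$ from the preceding lemmas with a single run of the $2/3$-walk. Recall that, under our standing assumption, the frayed ribbon of $T$ runs right and then up, so $T$ consists of a single square in the bottom row, the long row, and the ``flap'' column rising out of the corner of the outer turn. By Lemma~\ref{L_Flap_Box} the bottom square holds a $2$; by Lemma~\ref{L_Flap_Row} the long row reads $1,\dots,1,k^\ast$, where $k^\ast$ is the corner entry; and by Lemma~\ref{L_Flap_Column} the flap column, which weakly increases downward and hence has its maximum $k^\ast$ in the corner, contains exactly one unprimed $i$ for each $1\le i<k$. By Lemma~\ref{L_Flap_Corner} we have $k\le 3$. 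If $k\le 2$ then $T$ has no $3^\ast$ entry whatsoever and there is nothing to prove, so I would assume $k=3$. Then the corner carries a $3^\ast$, and since everything preceding it in reading order is the bottom $2$ followed by the $1$'s of the long row, it is the first $3^\ast$ in reading order, so canonical form forces it to be an unprimed $3$. It remains to rule out any further $3^\ast$.

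By Lemma~\ref{L_Flap_Row} the long row contributes no $3^\ast$ besides the corner, so any additional $3^\ast$ must lie in the flap column; since that column weakly increases downward with maximum $3$, its $3^\ast$ entries form a bottom block beginning with the corner, and since an unprimed letter cannot repeat in a column, every $3^\ast$ above the corner is a $3'$. Suppose for contradiction there are $c\ge 2$ of them, so that reading the flap column from the bottom up it begins $3, 3', \dots, 3'$ (with $c-1$ copies of $3'$), immediately followed by the column's $2^\ast$ block — one unprimed $2$ (guaranteed by Lemma~\ref{L_Flap_Column}) and then some $2'$'s — and then only $1^\ast$'s at the top, which contribute nothing more to the $2/3$-subword. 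Thus the $2/3$-subword of the reading word of $T$ is
\[
2,\ 3,\ \underbrace{3',\dots,3'}_{c-1},\ 2,\ \underbrace{2',\dots,2'}_{d-1}
\]
for some $d\ge 1$.

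Finally I would trace the $2/3$-walk on this subword: the initial $2$ steps right to $(1,0)$, the $3$ steps up to $(1,1)$, the run of $c-1\ge 1$ copies of $3'$ (first left, then up the $y$-axis) ends at $(0,c-1)$, the unprimed $2$ — now on the $y$-axis — steps right to $(1,c-1)$, and every subsequent $2'$ steps either left or up, so the height never again falls below $c-1$. Since $c-1\ge 1$, the walk cannot return to the $x$-axis, contradicting ballotness of $T$. Hence $c=1$, so the corner is the unique $3^\ast$ of $T$ and it is unprimed, as claimed.

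The main thing to be careful about is the primed/unprimed bookkeeping in the flap column — pinning down that its $3^\ast$ block reads exactly $3, 3', \dots, 3'$ and that there is precisely one unprimed $2$ available — together with the key observation that in the $c\ge 2$ case this lone unprimed $2$ is forced to step \emph{right} rather than \emph{down}, because the run of $3'$'s has already carried the walk onto the $y$-axis; consequently no down-step survives to bring the walk back to the $x$-axis.
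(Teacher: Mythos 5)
Your proof is correct and follows essentially the same route as the paper's: reduce to the corner entry being an unprimed $3$ via Lemmas \ref{L_Flap_Row}--\ref{L_Flap_Corner} and canonical form, then show a second $3^\ast$ would force the $2/3$-subword $2,3,3',\ldots,3',2,2',\ldots,2'$, whose walk is carried onto the $y$-axis by the $3'$ steps so that the lone unprimed $2$ is forced to step right and no down-step remains. One tiny slip: in the $2/3$-walk a $2'$ plays the role of $1'$ and is therefore always a right arrow (not ``left or up''), but since a right step also leaves the height unchanged, your conclusion is unaffected.
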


\begin{proof}
Suppose there are at least two $3^\ast$ entries. By Lemmas~\ref{L_Flap_Row} and~\ref{L_Flap_Box}, the set of $3^\ast$s must form a vertical strip in the right column. By Lemma~\ref{L_Flap_Corner}, and since $T$ is semistandard, the entry in the lower right corner must be a $3$. Then the subword corresponding to entries of $2^\ast$ or $3^\ast$ has the form $2, 3, 3', \ldots, 3', 2, 2', \ldots, 2'$ for some number of $3'$ entries. If there is at least one $3'$, then after the steps for the $2$ and the $3$, the corresponding lattice walk is on the $y$-axis, so the next $2$ gives a right step, and the following $2'$ entries also give right steps. Hence the walk does not return to the $x$-axis, contradicting that $T$ is ballot. Therefore, there can be at most one $3^\ast$ in the filling, and it must be a $3$ in the lower right corner.
\end{proof}

\begin{figure}
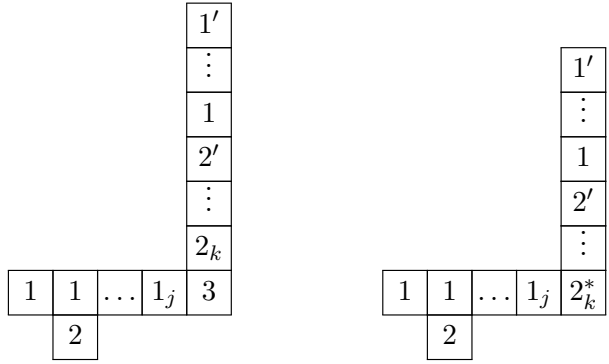

    \centering
    
    $$ \begin{ytableau}  
    \none & \none & \none & \none & 1' \\
    \none & \none & \none & \none & \vdots \\
    \none & \none &  \none & \none & 1 \\
    \none & \none & \none & \none & 2' \\
    \none & \none &  \none & \none & \vdots \\
    \none & \none  & \none & \none & 2_k \\
    1 & 1  & \hdots & 1_j &  3 \\
    \none & 2 \end{ytableau}  \hspace{2cm} 
    \begin{ytableau}  
    \none & \none & \none & \none & \none \\
    \none & \none & \none & \none & 1' \\
    \none & \none & \none & \none & \vdots \\
    \none & \none &  \none & \none & 1 \\
    \none & \none & \none & \none & 2' \\
    \none & \none &  \none & \none & \vdots \\
    1 & 1  & \hdots & 1_j &  2_k^\ast \\
    \none & 2 \end{ytableau}$$
    
    \caption{The forms that any ballot tableau of a frayed ribbon with one turn must take, from Lemmas~\ref{L_Flap_Column} through~\ref{lem:where-3}, where if $k$ is the number of $2^\ast$ entries in the long column and $j$ is the number of $1$ entries in the long row, we have $k<j$.}
    \label{fig:1turn_frayedribbon}
\end{figure}

We now show the above lemmas completely characterize the ballot tableaux of frayed ribbon shapes with one turn. 

\begin{prop}\label{prop:one-turn}
A tableau $T$ of a frayed ribbon shape with one (outer) turn is ballot if and only if it satisfies all of the conditions of Lemmas \ref{L_Flap_Column} through \ref{lem:where-3}.  In particular, they take one of the two forms shown in Figure \ref{fig:1turn_frayedribbon}.
\end{prop}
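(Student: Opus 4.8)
The forward implication is essentially immediate: Lemmas~\ref{L_Flap_Column} through~\ref{lem:where-3} are each established for an arbitrary ballot tableau $T$ of a one-turn frayed ribbon (oriented, as throughout this subsection, so that it runs to the right and then up), so a ballot $T$ automatically satisfies all of their conclusions. The plan for this direction is just to record that these conclusions pin down $T$: Lemma~\ref{L_Flap_Box} puts a $2$ in the bottom box, Lemma~\ref{L_Flap_Row} makes the long row a string of $1$'s followed by a corner entry, Lemma~\ref{L_Flap_Corner} forces that corner to be at most $3$, Lemma~\ref{lem:where-3} disposes of the corner-$3$ case by making it the unique $3^\ast$ and unprimed, Lemma~\ref{L_Flap_Column} forces the rightmost column to be a run of $1^\ast$'s over a run of $2^\ast$'s with exactly one unprimed copy of each value lying strictly below the column maximum, and Lemma~\ref{L_FLap_OneTwo} records the inequality between the number of $1$'s in the long row and the number of $2^\ast$'s in the column. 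Reading this off yields the two pictures of Figure~\ref{fig:1turn_frayedribbon} (I would also note the trivial variant in which the whole flap column consists of $1^\ast$'s, which is ballot by the same computation). So the substance is the converse: any $T$ of this shape satisfying all of these conditions is ballot.

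For the converse I would verify ballotness by computing the $i/(i+1)$-walks directly. Parametrize $T$ by the number $j$ of $1$'s in the long row, the number $k\ge 1$ of $2^\ast$'s in the rightmost column, and the number $m\ge 1$ of $1^\ast$'s in that column, so that $j\ge k+1$ by Lemma~\ref{L_FLap_OneTwo}. The only entries are $1^\ast$, $2^\ast$, and, in the corner-$3$ form, a single $3$; hence the $i/(i+1)$-walk is empty for $i\ge 4$, and for $i=3$ it is empty or the single right step from that corner $3$, landing on the $x$-axis. The $2/3$-walk reads the bottom-box $2$, then the corner (an up step in the corner-$3$ form, a step along the $x$-axis otherwise), then the column's $2^\ast$-block read from bottom to top, whose unique unprimed $2$ returns the walk to the axis and whose remaining $2'$'s run rightward along it; one checks it ends at $(k,0)$ or $(k+1,0)$. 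The crux is the $1/2$-walk, whose subword in both forms is
\[
2,\ \underbrace{1,\ldots,1}_{j},\ \ 2^\ast,\ \underbrace{2',\ldots,2'}_{k-1},\ \ 1,\ \underbrace{1',\ldots,1'}_{m-1},
\]
where the displayed $2^\ast$ is the corner (when it is a $2^\ast$) or else the unique unprimed $2$ of the column. Tracing the walk: the opening $2$ and the first $1$ return it to the $x$-axis, the other long-row $1$'s carry it right to $(j-1,0)$, the single $2^\ast$ raises it to height $1$ and the $k-1$ copies of $2'$ carry it left to $(j-k,1)$, the one unprimed $1$ of the column drops it to $(j-k,0)$, and the trailing $1'$'s run right; so it ends on the $x$-axis. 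The bound $j\ge k+1$ of Lemma~\ref{L_FLap_OneTwo} is precisely what keeps this leftward run strictly to the right of the $y$-axis, so the walk never leaves the first quadrant and its endpoint genuinely lies on the $x$-axis.

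I would treat both forms in parallel, noting that whether the corner is $2$ or $2'$, and exactly where the one guaranteed unprimed entry of a column block sits, are irrelevant to every walk, since at a point of the $x$-axis an unprimed letter and its primed counterpart take the same step. The main difficulty is bookkeeping rather than anything conceptual: one must keep track of the on-axis versus off-axis case at every step of the $1/2$-walk --- in particular the first long-row $1$ begins on the $y$-axis and so steps right, not down, and the walk returns to the $x$-axis only after the single unprimed $1$ of the column --- and one must invoke $j\ge k+1$ at exactly the leftward run and the final position. I expect the cleanest write-up to fold both forms, and all the primed/unprimed choices, into a single parametrized computation as above.
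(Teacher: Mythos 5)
Your proof is correct and takes essentially the same route as the paper: the forward direction is immediate from Lemmas \ref{L_Flap_Column}--\ref{lem:where-3}, and the converse reduces to observing that the conditions force one of the two forms of Figure \ref{fig:1turn_frayedribbon} and then checking those are ballot --- a verification the paper dismisses as routine and which you carry out explicitly via the walk computation. (One small internal slip: you first say the opening $2$ and the first long-row $1$ return the walk to the $x$-axis, and later that this $1$ starts on the $y$-axis and steps right; under either reading the walk sits at $(j-1,0)$ after the long row, so nothing downstream is affected.)
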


\begin{proof}
  The forward implication is given by Lemmas \ref{L_Flap_Column} through \ref{lem:where-3}.  Now, suppose a tableau $T$ satisfies the conditions of these six lemmas.  Then it has one of the two forms shown in Figure \ref{fig:1turn_frayedribbon} depending on whether a $3^\ast$ occurs or not (since if it occurs there is exactly one and it appears unprimed in the outer turn).  Then it is easy to check that both of these types of tableaux have ballot reading words.
\end{proof}

Define the \textbf{column height} of a frayed ribbon with one turn to be the number of boxes in the long column above the corner of the outer turn (not including the corner box of the turn itself). Proposition \ref{prop:one-turn} leads to the following explicit Schur $Q$ expansion for one turn frayed ribbons.

\begin{corollary}\label{cor:expansions}
 Let $D$ be a frayed ribbon with one turn of size $n$ and column height $h$.  Then $$Q_D=Q_{(n-1,1)}+2\sum_{i=2}^{m_1(h)} Q_{(n-i,i)}+\sum_{i=2}^{m_2(h)} Q_{(n-i,i,1)}$$ where $m_1(h)=\min(h+1,n-h-2)$ and $m_2(h)=\min(h,n-h-2)$.
\end{corollary}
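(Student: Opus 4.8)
The plan is to combine Proposition~\ref{prop:one-turn}, which pins down every ballot tableau of a one-turn frayed ribbon, with the lattice-walk version of the shifted Littlewood--Richardson rule (\cite{GLP}): since $Q_D=Q_{\lambda/\mu}=\sum_\nu f^\lambda_{\mu\nu}Q_\nu$ and $f^\lambda_{\mu\nu}$ counts ballot tableaux of shape $D$ and content $\nu$, it suffices to sort the ballot tableaux of $D$ by content and count each class. By Proposition~\ref{prop:one-turn} every such tableau has one of the two forms in Figure~\ref{fig:1turn_frayedribbon}, so after that input the argument is a finite bookkeeping computation.

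First I would record the parametrization implicit in Lemmas~\ref{L_Flap_Column}--\ref{lem:where-3}. If $D$ has size $n$ and column height $h$, the long row has $n-h-1$ boxes, the long column has $h+1$ boxes, and they overlap only in the corner box of the turn. A ballot tableau $T$ of $D$ is determined by the entry $e$ of that corner box, which by Lemmas~\ref{L_Flap_Corner} and~\ref{lem:where-3} must be $1$, $2$, $2'$, or $3$, together with the place where the long column switches from a top run of $1^\ast$ entries to a bottom run of $2^\ast$ entries: indeed Lemma~\ref{L_Flap_Row} forces the long row to be $1,\dots,1,e$, Lemma~\ref{L_Flap_Box} forces the bottom box to be $2$, and Lemma~\ref{L_Flap_Column} forces each run in the column to be $1',\dots,1',1$ and $2',\dots,2',2$ (one unprimed entry, at the bottom of the run) whenever a strictly larger entry sits below it. The content of $T$ is then a one-line function of the split, most conveniently phrased through $i$, the total number of $2^\ast$ entries of $T$.

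Next I would split on $e$ and read off the three families of terms. If $e=1$, then the long row and the whole $1^\ast$-column consist of unprimed $1$'s and the bottom box is a $2$, everything being forced; this is the unique ballot tableau, its content is $(n-1,1)$, and it produces the term $Q_{(n-1,1)}$. If $e\in\{2,2'\}$ there is no $3$, and for each allowed split there are exactly two ballot tableaux, since the bottom $2^\ast$ of the column -- the corner -- may be primed or unprimed while all other entries are forced; here the content works out to the two-row strict partition $(n-i,i)$, and combining ``the column contains an unprimed $1$'' (Lemma~\ref{L_Flap_Column}) with ``the long row has at least as many $1$'s as $T$ has $2^\ast$'s'' (Lemma~\ref{L_FLap_OneTwo}) gives exactly $2\le i\le\min(h+1,n-h-2)=m_1(h)$, hence the term $2\sum_{i=2}^{m_1(h)}Q_{(n-i,i)}$. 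If $e=3$, Lemma~\ref{lem:where-3} makes this the only $3^\ast$ and forces it unprimed, so the bottom $2$ of the $2^\ast$-run is also forced (it sits above the $3$) and each allowed split gives a single ballot tableau, whose content is a three-row strict partition determined by the split; requiring an unprimed $1$ \emph{and} an unprimed $2$ in the column together with Lemma~\ref{L_FLap_OneTwo} yields $2\le i\le\min(h,n-h-2)=m_2(h)$, and collecting these gives the third family $\sum_{i=2}^{m_2(h)}Q_{(n-i,i,1)}$. Summing the three families gives the claimed identity.

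Finally I would tie up the two loose ends. Distinct $(e,\text{split})$ data give distinct tableaux -- a different $e$, or for fixed $e$ a different split, which changes the content (or, for fixed content in the $e\in\{2,2'\}$ case, changes exactly the corner prime) -- so no class is counted twice, and within the $e\in\{2,2'\}$ case the two tableaux of a fixed content are genuinely distinct and genuinely ballot, which is precisely what Proposition~\ref{prop:one-turn} guarantees. Every $\nu$ produced is automatically \emph{strict}: since $\min(x,y)\le\frac12(x+y)$ we get $m_1(h),m_2(h)\le\frac{n-1}{2}$, so the first part always strictly dominates the second. I expect the only real difficulty to be getting the two $\min$ ranges exactly right, that is, translating each of Lemmas~\ref{L_Flap_Column} and~\ref{L_FLap_OneTwo} into the correct inequality on $i$ and checking the endpoints; the genuinely combinatorial work has already been carried out in Proposition~\ref{prop:one-turn}.
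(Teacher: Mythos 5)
Your approach is the same as the paper's: invoke Proposition \ref{prop:one-turn} to reduce to the two forms in Figure \ref{fig:1turn_frayedribbon}, then sort the ballot tableaux by the corner entry and by the number of $2^\ast$'s and count the free primes. Your range bookkeeping is right: the long column has $h+1$ boxes, so demanding an unprimed $1$ (resp.\ an unprimed $1$ and an unprimed $2$) above the $2^\ast$-run caps the total number $i$ of $2^\ast$'s at $h+1$ (resp.\ $h$), while Lemma \ref{L_FLap_OneTwo} caps it at $n-h-2$; this yields exactly $m_1(h)$ and $m_2(h)$, and the factor of $2$ versus $1$ in the two families is correctly attributed to whether the bottom entry of the $2^\ast$-run is free or forced unprimed. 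One small slip: in the $e=1$ case the column boxes above the corner carry $1'$, not unprimed $1$ (an unprimed $1$ cannot repeat in a column); the count and content are unaffected.

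The substantive problem is in the third family. You assert the content ``works out to'' $(n-i,i,1)$, but with $i$ the total number of $2^\ast$'s --- which is the normalization that makes your range $2\le i\le m_2(h)$ correct --- the tableau has $i$ twos, one three, and therefore $n-i-1$ ones, so its content is $(n-i-1,\,i,\,1)$. The partition $(n-i,i,1)$ has size $n+1$ and cannot be the content of a tableau of a shape of size $n$; e.g.\ for $n=6$, $h=2$ the unique $e=3$ ballot tableau has content $(3,2,1)$, whereas the displayed formula would contribute $Q_{(4,2,1)}$. This is an off-by-one in the statement of the corollary itself (the paper's two-line proof does not write the content out either), but since your write-up explicitly claims to verify that each family's content matches the displayed partition, this is precisely the step at which the verification should have caught the discrepancy rather than reproduced it. The third family should read $\sum_{i=2}^{m_2(h)}Q_{(n-i-1,i,1)}$; with that correction the rest of your argument, including the strictness check via $m_1(h),m_2(h)\le\frac{n-1}{2}$, goes through.
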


\begin{proof}
  The first and second terms correspond to the tableaux of the form on the right of Figure \ref{fig:1turn_frayedribbon} (where for $i\ge 2$ the bottom $2$ in the long column may be either primed or unprimed, hence the coefficient of $2$), and the third term corresponds to the tableaux of the form on the left of Figure \ref{fig:1turn_frayedribbon}, which must have an unprimed $2$ just above the $3$ and hence have a coefficient of $1$.
\end{proof}

\begin{theorem}
Suppose $D$ and $E$ are frayed ribbons with one turn of size $n$, where $D$ has column height $h$ and $E$ has column height $\ell$ for $h\neq\ell$. Then $Q_D\neq Q_E$. 
\end{theorem}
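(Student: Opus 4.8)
The plan is to leverage the explicit Schur $Q$ expansion from Corollary~\ref{cor:expansions}, which writes $Q_D$ for a one-turn frayed ribbon of size $n$ and column height $h$ entirely in terms of the straight-shape basis elements $Q_{(n-1,1)}$, $Q_{(n-i,i)}$ for $2 \le i \le m_1(h)$ (with coefficient $2$), and $Q_{(n-i,i,1)}$ for $2 \le i \le m_2(h)$ (with coefficient $1$), where $m_1(h) = \min(h+1, n-h-2)$ and $m_2(h) = \min(h, n-h-2)$. Since $\{Q_\lambda\}$ is a basis of $\Omega$, two one-turn frayed ribbons $D$ and $E$ of the same size $n$ satisfy $Q_D = Q_E$ if and only if the corresponding coefficient sequences agree; so it suffices to show that the function $h \mapsto (m_1(h), m_2(h))$ determines $h$, i.e., that distinct column heights give distinct multisets of straight shapes appearing in the expansion.

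First I would note that $n$ is recoverable from $Q_D$ (it is the common degree of every monomial), so we may assume $D$ and $E$ both have size $n$. Then I would reduce the problem to the claim: if $h \ne \ell$ then $(m_1(h), m_2(h)) \ne (m_1(\ell), m_2(\ell))$. The key structural observation is that the set of $2$-row shapes $(n-i,i)$ with $i \ge 2$ appearing with coefficient $2$ in $Q_D$ has largest index exactly $m_1(h)$, so $m_1(h)$ is read off directly from the expansion; similarly $m_2(h)$ is read off as the largest $i$ with $(n-i,i,1)$ appearing. So the whole theorem comes down to the elementary fact that $h \mapsto m_1(h)$ (or, where that fails, $h \mapsto m_2(h)$) is injective on the range of valid column heights.

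The main step is then a short case analysis on the piecewise-linear function $m_1(h) = \min(h+1, n-h-2)$. On the "increasing branch" where $h+1 \le n-h-2$, i.e., $h \le (n-3)/2$, we have $m_1(h) = h+1$, which is strictly increasing in $h$. On the "decreasing branch" where $h \ge (n-3)/2$, we have $m_1(h) = n-h-2$, strictly decreasing. The only way two distinct heights $h \ne \ell$ can share the same $m_1$ value is if one lies on each branch and they are "reflections" about the peak; in that case $h + 1 = n - \ell - 2$, so the two shapes have the same $2$-row content but I would then check that $m_2$ distinguishes them, since on the increasing branch $m_2(h) = h = m_1(h) - 1$ while on the decreasing branch $m_2(\ell) = n-\ell-2 = m_1(\ell)$, and these differ precisely because of the off-by-one asymmetry between $m_1$ and $m_2$ (note $m_1(h) - m_2(h)$ equals $1$ on the increasing side and $0$ on the decreasing side). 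A boundary subtlety to handle carefully is the behavior at the peak $h = (n-3)/2$ when $n$ is odd (there the two formulas agree) versus $n$ even (two adjacent integer heights straddle the peak); and one must also remember the constraint $0 \le h \le n-3$ coming from the shape being a genuine frayed ribbon with one turn and a long row of at least three boxes.

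The hard part will not be the arithmetic but making airtight the claim that the coefficient data of $Q_D$ literally recovers the pair $(m_1(h), m_2(h))$ — in other words, ruling out "accidental" coincidences where a shape like $(n-i,i)$ or $(n-i,i,1)$ that is not supposed to appear in $Q_D$ nonetheless shows up because of some collision, or where the coefficient $2$ versus $1$ pattern is ambiguous. Here I would lean entirely on Corollary~\ref{cor:expansions}, which pins down the expansion exactly (all coefficients are $0$, $1$, or $2$, and which shapes get which is completely explicit), so there is genuinely nothing to collide. Given that, the theorem follows from linear independence of the $Q_\lambda$ together with the injectivity analysis of $h \mapsto (m_1(h), m_2(h))$ sketched above.
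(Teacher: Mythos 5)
Your proposal is correct and takes essentially the same approach as the paper: both invoke Corollary~\ref{cor:expansions} to reduce the theorem to the injectivity of $h \mapsto (m_1(h), m_2(h))$, and both settle that by elementary analysis of the two $\min$ expressions. The paper's arithmetic is just slightly more compact than your branch analysis --- it observes that $m_1(h)=m_1(\ell)$ with $h\neq\ell$ forces $h+1=n-\ell-2$, while $m_2(h)=m_2(\ell)$ forces $h=n-\ell-2$, an immediate contradiction.
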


\begin{proof}
  Suppose $Q_D=Q_E$.  Then by Corollary \ref{cor:expansions}, we must have $m_1(h)=m_1(\ell)$ and $m_2(h)=m_2(\ell)$.  The former equation states that $\min(h+1,n-h-2)=\min(\ell+1,n-\ell-2)$.  Since $h\neq \ell$, it follows that either $h+1=n-\ell-2$ or $n-h-2=\ell+1$, which are in fact equivalent statements, and so they both hold.
  
  From $m_2(h)=m_2(\ell)$, we have $\min(h,n-h-2)=\min(\ell,n-\ell-2)$, and so by the same reasoning we have $h=n-\ell-2$, which contradicts our equality above.  Hence $Q_D\neq Q_E$.
\end{proof}

\subsection{Frayed ribbons with two turns and column height $0$}\label{sec:height0}

This subsection, along with the next two, will prove the third statement of Theorem \ref{thm:main} in three parts.  As a first step, we will show that all frayed ribbons with two turns and no boxes between the turns have distinct Schur $Q$ functions (up to antipodal reflection).  

Choosing only the representative of each antipodal pair with second-to-last row having length at least three, we see that any such shape is uniquely determined by three parameters, as illustrated in Figure \ref{fig:general_2turn_ribbonbox}: 
\begin{itemize}
    \item The \textbf{top width} $w_1$, defined as the number of squares in the top row,
    \item The \textbf{column height} $h$, defined as the number of squares in the vertical column between the two long rows, and
    \item The \textbf{bottom width} $w_2$, defined as the number of squares in the row second from the bottom.
\end{itemize}

\begin{figure}
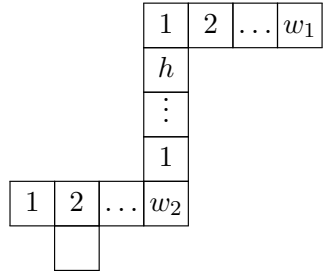

    \centering
    
    $$ \begin{ytableau}  \none & \none & \none & 1 & 2 & \hdots & w_1 \\
    \none & \none &  \none & h \\
    \none & \none &  \none & \vdots \\
    \none & \none  & \none & 1 \\
    1 & 2  & \hdots &  w_2 \\
    \none & \empty \end{ytableau} $$
    
    \caption{A general picture of a frayed ribbon with $2$ turns and parameters $h,w_1,$ and $w_2$. Note that, if $D=\nu/\mu$, then $w_1=\nu_1-\mu_1$ and $w_2 = \nu_{\ell(\mu)+1}$. Further, $n=w_1+h+w_2+1$
    Without loss of generality, by Proposition~\ref{prop:antipodal}, we can assume that $w_2\geq 3$.}
    \label{fig:general_2turn_ribbonbox}
\end{figure}

We now prove two technical lemmas that explicitly calculate a set of Littlewood-Richardson coefficients that arise in the two-turn height $0$ case.

\begin{lemma}\label{lem:value-024}
Let $D=\nu/\mu$ be a frayed ribbon of size $n$ with two turns and parameters $w_1,h,w_2$ where $h=0$.  Then for any $k$ such that $(n-k,k)$ is a shifted partition, the coefficient  $f^{\nu}_{\mu,(n-k,k)}$ of $Q_{(n-k,k)}$ in the expansion of $Q_{D}$ is given by:
$$ f^{\nu}_{(n-k,k),\mu} = \begin{cases} 4 & \text{ if } 2\leq k \leq \min(w_1+1,w_2)-1 \\ 2 & \textrm{ if } k=\min(w_1+1,w_2)  \\ 0 &  \textrm{ if } k \geq \min(w_1+1,w_2)+1  \end{cases}.$$
In particular, when $w_1+1\neq w_2$ there is a unique value of $k$ for which $f^{\nu}_{(n-k,k),\mu}=2$.  When $w_1+1=w_2$, since $(w_1+1,w_2)$ is not a shifted partition, there is no value of $k$ for which $f^{\nu}_{(n-k,k),\mu}=2$.
\end{lemma}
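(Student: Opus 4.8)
The plan is to directly enumerate the ballot tableaux of shape $D = \nu/\mu$ with content $(n-k,k)$, just as in the proof of Proposition~\ref{prop:distinguish_turns} but now keeping track of the full dependence on $k$ and on the parameters $w_1, w_2$ (with $h=0$). First I would pin down the skeleton of such a tableau. Since the content is $(n-k,k)$, only $1^\ast$ and $2^\ast$ entries appear, so the only walk to analyze is the $1/2$-walk, and the tableau is ballot iff that walk returns to the $x$-axis. By semistandardness and the canonical form condition, the entry in the isolated bottom square must be a $2$ (it is first in reading order, so unprimed, and cannot be a $1$ since a $1$ sits above it by Lemma~\ref{lem:top-row} applied to the long bottom row — actually more simply, the box above it in the bottom-width row is a $1$). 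By Lemma~\ref{lem:top-row} the top row is all $1^\ast$ with at most one $1'$. The remaining $k-1$ copies of $2^\ast$ must then occupy a set of squares having no $1^\ast$ weakly below-or-right of them in reading order terms; with $h=0$ the only candidate locations are the corner of the outer turn and the corner of the inner turn region — I would carefully identify, from Figure~\ref{fig:general_2turn_ribbonbox}, exactly which boxes can hold a $2^\ast$, and argue that the $2^\ast$'s other than the bottom one must form a contiguous run ending at one of the turn corners, with the count of $2^\ast$'s bounded by how far the run can extend up the length-$h=0$ column — i.e., essentially forcing $k$ to be small.

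Next I would run the ballot check on the $1/2$-walk for each admissible configuration. The reading word has the shape $2\,(1\cdots1)\,(\text{mixed }1^\ast\text{'s with }2^\ast\text{'s interleaved at the turn corners})\,(1^\ast\cdots1^\ast)$. The initial $2$ lifts the walk to height $1$; the subsequent run of unprimed $1$'s in the bottom-width row moves it right then down, and each later $2^\ast$ either lifts it (if the walk is on the $x$-axis at that moment) or pushes it left. The key quantitative point, exactly as in Proposition~\ref{prop:distinguish_turns}, is that for the walk to return to the $x$-axis we need enough unprimed $1$'s appearing \emph{after} the last $2^\ast$ to cancel the net upward displacement; semistandardness forces all but finitely many post-$2^\ast$ ones to be primed, so the binding constraint is that the number of $2^\ast$'s cannot exceed $\min(w_1+1, w_2)$ — beyond that there are simply not enough cancelling down-steps, giving coefficient $0$ for $k \ge \min(w_1+1,w_2)+1$. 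For $2 \le k \le \min(w_1+1,w_2)-1$ the configuration is "interior": both the square above each relevant turn corner and the bottom $2$ have two priming choices each consistent with ballotness (primed/unprimed for the lower-left of each of the two uniformly-labeled ribbons in the greedy-style decomposition), yielding $2 \times 2 = 4$; this mirrors the "four ballot tableaux per outer turn" count in Proposition~\ref{prop:distinguish_turns}, here specialized to the two-turn height-$0$ shape. At the extremal value $k = \min(w_1+1,w_2)$ one of those two priming freedoms is destroyed — the $1^\ast$ forced to follow the last $2^\ast$ in the final column must be unprimed to supply the last down-step — leaving coefficient $2$.

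The main obstacle I anticipate is a careful, case-free bookkeeping of \emph{where} the $2^\ast$ entries can sit when $h=0$: because the column between the two long rows has height $0$, the outer turn corner and the top row are "close" in reading order, and one must verify that no $2^\ast$ can hide at the end of the bottom-width row or elsewhere without creating a semistandardness violation or stranding the walk above the axis. I would handle this by first proving a structural claim — analogous to Lemmas~\ref{L_Flap_Row} and~\ref{L_Flap_Box} — that in any ballot tableau here the $2^\ast$'s consist of the forced bottom $2$ together with a vertical strip climbing from the outer turn corner, of length at most one less than $\min(w_1+1,w_2)$, and then the walk computation becomes a short interval check. Finally I would read off the three cases, and note the stated parity remark: $(w_1+1, w_2)$ failing to be a shifted partition exactly when $w_1 + 1 = w_2$ means the "$=2$" coefficient is absent precisely then, while for $w_1 + 1 \ne w_2$ it occurs at the single value $k = \min(w_1+1, w_2)$.
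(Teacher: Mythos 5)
Your overall strategy --- fix the skeleton forced by semistandardness, canonical form, and Lemma \ref{lem:top-row}, then run the $1/2$-walk and count priming freedoms --- is the same as the paper's. However, several of your concrete structural claims are wrong for the $h=0$ geometry, and one of them would derail the count at the threshold value of $k$.

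First, with $h=0$ there is no column: the $2^\ast$ entries other than the forced bottom $2$ form a \emph{horizontal} suffix of the second-to-bottom row of length $w_2$, not ``a vertical strip climbing from the outer turn corner.'' Correspondingly, the two constraints $k\le w_2$ and $k\le w_1+1$ have different sources, which you conflate: the first is pure semistandardness (the three staircase boxes admit no filling by $2^\ast$ alone, so at least one box of that row must hold a $1^\ast$), and only the second is a down-step/walk constraint. Second, the two priming freedoms in the generic range $2\le k\le \min(w_1+1,w_2)-1$ are carried by the leading $2^\ast$ of that suffix and the leading $1^\ast$ of the top row; the bottom $2$, which you list as carrying a freedom, is the first $2^\ast$ in reading order and is forced unprimed by canonical form (as you yourself note earlier in your proposal). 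Third, and most importantly, at $k=\min(w_1+1,w_2)$ there are two distinct mechanisms and your single explanation covers only one. When $\min=w_1+1<w_2$, the walk is indeed stranded unless the top-row $1^\ast$ is unprimed, as you say. But when $\min=w_2\le w_1$, the suffix of $2^\ast$'s fills all but the first box of the row, so the $2^\ast$ sitting above the bottom $2$ is forced to be $2'$ by semistandardness and the lone remaining $1$ in that row is forced unprimed by canonical form, while the top-row $1^\ast$ begins its step on the $y$-axis and retains \emph{both} priming choices (the walk returns either way since $w_1\ge w_2$). Applied literally, your plan would look for a ballot-forced unpriming that is not the operative constraint in that case; you need to split into the cases $w_1+1\ge w_2$ and $w_1+1<w_2$, as the paper does.
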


\begin{proof}
  \textbf{Case 1.} Suppose $w_1+1\ge w_2$, so that $\min(w_1+1,w_2)=w_2$.  We count the ballot tableaux of shape $D$ with content $(n-k,k)$ for each $k$.  If $k\geq w_2+1$, then since there must be only $1^\ast$ entries in the top row in any ballot tableau by Lemma \ref{lem:top-row}, the remaining $w_2+1$ boxes must contain $2^\ast$ entries. However, there is no semistandard filling of the three boxes
$$\begin{ytableau} \empty & \empty \\ \none & \empty \end{ytableau}$$
with only $2^\ast$ entries, hence there is no ballot tableau of $D$ with content $(n-k,k)$. So the coefficient is $0$ in this case. 

If $k=w_2$, we first note that $n=w_1+w_2+1$, so the content $(n-k,k) = (w_1+1,w_2)$ is not a valid shifted partition (and hence the second case of the theorem not appearing when $w_1+1=w_2$).  So we may assume in this subcase that $w_1+1>w_2$.  Then there are only two semistandard fillings with this content in canonical form whose top row is all $1^\ast$ entries: 
$$\begin{ytableau} 
\none & \none & \none & \none & 1^\ast & 1 & \hdots & 1 \\ 
1 & 2' & 2 & \hdots & 2 \\ 
\none & 2 
\end{ytableau}$$
where the first $1$ in the final row can be primed or unprimed.  We claim that these fillings are in fact both ballot as well.  Indeed, in the lattice walk for this filling, after reading the second row, the walk is at $(0,w_2-1)$.  Since $w_1+1>w_2$, we have $w_1\ge w_2$, so after the $1^*$, there are at least $w_2-1$ entries with value $1$ in the top row.  This brings the walk back to the $x$-axis, so the fillings are ballot.  It follows that $f^{\nu}_{(n-k,k),\mu}=2$ when $(n-k,k)$ is a valid content in this case.

Finally, suppose $2\leq k\leq w_1+1$.  Then there are four valid fillings:
$$\begin{ytableau} \none &\none & \none & \none & \none & \none  & \none & 1^\ast & 1 & \hdots & 1 \\ 1 & 1 & \hdots & 1 & 2^\ast  & 2 & \hdots & 2 \\ \none & 2 \end{ytableau} $$ 
where the first $2$ in the second row and $1$ in the top row can be primed or unprimed. Since $k\leq w_1+1$, there are at most $w_1\leq w_2$ entries with value $2^\ast$ in the second row, and $w_2$ entries with value $1$ in the final row, so the corresponding lattice walk returns to the $x$-axis. Therefore $f_{(n-k,k),\mu}^{\nu} = 4$ in this case.

\textbf{Case 2.} Suppose that $w_1+1<w_2$, so that $\min(w_1+1,w_2)=w_1+1$ and $w_1+1\neq w_2$. If $k\geq w_1+2$, then for a filling of $D$ to be ballot and semistandard we need all $1^\ast$ entries in the top row, the bottommost square contains a $2$, and the remaining $w_1+1$ of the $2^\ast$ entries occur in the second row.  After reading past these $2^\ast$ entries (all of which but the first must be $2$ by semistandardness), the walk must be at height $w_1+1$, But only $w_1$ of the $1^\ast$ entries follow it in the top row.  Thus the lattice walk cannot return to the $x$-axis, and so there are no ballot tableaux.  Hence in this case $f^{\nu}_{(n-k,k),\mu}=0$.

If $k=w_1+1$, then since $w_1+1<w_2$ is a strict inequality, we find four semistandard fillings of valid content $(w_2,w_1+1)$ in canonical form whose top row contains all $1^\ast$ entries:

$$ \begin{ytableau} \none & \none & \none & \none & \none & \none & \none & 1^\ast & \hdots & 1 \\ 1 & 1 & \hdots & 1 & 2^\ast & 2 & \hdots & 2 \\ \none & 2 \end{ytableau} $$
We will show that only two of these are ballot, namely when the top $1^\ast$ is unprimed.   Indeed, if $x$ is the number of $1$ entries in the second row, then since there are $k-1=w_1$ entries with value $2^\ast$ in the second row, the lattice walk is at $(x-1,w_1)$ after reading this row.  Then the remaining $w_1$ entries on the top row return the walk to the $x$-axis if and only if the $1^\ast$ is unprimed so that its arrow points down.  Hence there are two valid fillings in this case, and $f^{\nu}_{(n-k,k),\mu}=2$.

Finally, if $2\leq k\leq w_1$, then any ballot tableau has at most $w_1-1$ entries with value $2^\ast$ in the second row, and we find four ballot tableaux by the lattice walk reasoning above: $$ \begin{ytableau} \none & \none & \none & \none & \none & \none & \none & 1^\ast & 1 & \hdots & 1 \\ 1 & 1 & \hdots & 1 & 2^\ast & 2 & \hdots & 2 \\ \none & 2 \end{ytableau} $$ Hence there are four ballot tableaux in this case and $f^{\nu}_{(n-k,k),\mu}=4$.
\end{proof}

\begin{lemma}\label{lem:value-012}
Let $D=\nu/\mu$ be a frayed ribbon of size $n$ with two turns and parameters $w_1,h,w_2$ where $h=0$. Then for any $k$ such that $(n-k,k)$ is a shifted partition, the coefficient  $f^{\nu}_{\mu,(n-k-1,k,1)}$ of $Q_{(n-k-1,k,1)}$ in the expansion of $Q_{D}$ is given by:
$$f^{\nu}_{(n-k-1,k,1),\mu} = \begin{cases} 2 \textrm{ if } k\leq \min(w_1,w_2)-1 \\ 1 \textrm{ if } k = \min(w_1,w_2) \\ 0 \textrm{ if } k\geq \min(w_1,w_2)+1 \end{cases} $$
In particular, when $w_1\neq w_2$ there is a unique value of $k$ for which $f^{\nu}_{(n-k-1,k,1),\mu}=1$.  When $w_1=w_2$, since $(w_1,w_2,1)$ is not a shifted partition, there is no value of $k$ for which $f^{\nu}_{(n-k-1,k,1),\mu}=1$.
\end{lemma}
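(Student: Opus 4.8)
The plan is to count ballot tableaux of shape $D = \nu/\mu$ with content $(n-k-1,k,1)$, case by case on $k$, in direct parallel with the proof of Lemma \ref{lem:value-024}. Since $h=0$, the shape consists of the long bottom row (second from bottom, with $w_2$ boxes), the single staircase box below it, the turn box shared with the top row, and the top row of $w_1$ boxes; the turn box is simultaneously in the long column (of height $0$) and the top row. By Lemma \ref{lem:top-row}, the top row is entirely $1^\ast$ with at most one $1'$, so in any ballot tableau the single $3^\ast$ must sit in the corner of the outer turn (the only box that is both at the end of the top row and can carry a label larger than $1$ without forcing a semistandardness violation), and by the canonical form / ballot condition it must be an unprimed $3$. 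This is exactly the analogue of Lemma \ref{lem:where-3}, and it explains why the coefficient here is half of the corresponding one in Lemma \ref{lem:value-024}: the presence of the forced unprimed $3$ in the turn removes one of the two prime/unprime choices.

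Next I would pin down the rest of the filling. As in Lemma \ref{L_Flap_Box}, the staircase box in the bottom row is first in reading order, hence unprimed, and cannot be a $1$ (that would stack two unprimed $1$s vertically), so it is a $2$. The long bottom row then has the form $1,1,\dots,1,2,2,\dots,2$ with the first $2$ either primed or unprimed — call the number of $2^\ast$ entries in that row $k-1$ (the bottom staircase $2$ accounts for the $k$-th copy), and say there are $w_2 - (k-1)$ copies of $1$ before them. The top row is $1^\ast,1,\dots,1$ of length $w_1$, with the $3$ occupying the turn box at its right end (so the top row genuinely has $w_1 - 1$ entries of value $1^\ast$ plus the $3$; one must be slightly careful about how $w_1$ counts the turn box, matching the picture in Figure \ref{fig:general_2turn_ribbonbox}). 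Thus the only freedom is: the prime on the first $2$ of the bottom row, and the prime on the first $1$ of the top row — two binary choices, giving at most $4$ candidate fillings, but the forced $3$ already uses up the freedom that in Lemma \ref{lem:value-024} produced coefficient $4$ versus $2$.

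Now I would run the $1/2$-walk to decide which candidates are ballot, and then check the $2/3$-walk is automatically ballot (it reads $2,3$ from the bottom $2$ and the turn $3$, then nothing else of value $2^\ast$ or $3^\ast$ after — wait, the bottom row $2$'s come before the turn $3$ in reading order, so the $2/3$-subword is $2,\dots,2,3$, which ends on the $x$-axis, fine; one must double-check ordering but it is routine). For the $1/2$-walk: reading bottom-to-top, the staircase $2$ is an up step, then the $1$'s of the bottom row move right (the first $1$ right along the axis, later $1$'s down — so after the bottom row the walk sits near the axis), then the $2^\ast$'s of the bottom row go up/left, then the top-row $1^\ast$'s come back. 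The computation is the same shape as in Lemma \ref{lem:value-024}: the walk returns to the $x$-axis iff there are enough $1$'s above to cancel the $2$'s, which translates into the threshold $k \le \min(w_1,w_2)$, with the borderline case $k = \min(w_1,w_2)$ forcing the top $1^\ast$ to be unprimed (killing one of the two choices, hence coefficient $1$), and $k > \min(w_1,w_2)$ admitting no ballot filling (coefficient $0$); for $k < \min(w_1,w_2)$ both choices of the top $1^\ast$ survive, but the first $2$ in the bottom row is still free, giving coefficient $2$. The observation that $(w_1,w_2,1)$ fails to be a strict partition when $w_1 = w_2$ handles the parenthetical remark, exactly as in the previous lemma.

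The main obstacle I anticipate is purely bookkeeping: getting the index $k$ aligned correctly against the parameters $w_1, w_2$ (the turn box is shared between the top row and the column, so one must be consistent about whether $w_1$ includes it, and whether the ``$k$'' in $(n-k-1,k,1)$ counts the staircase $2$), and making sure the walk-return inequalities come out as $\min(w_1,w_2)$ rather than off by one. There is no genuinely new idea beyond Lemmas \ref{lem:top-row}, \ref{L_Flap_Box}, \ref{lem:where-3} and the walk analysis already deployed for Lemma \ref{lem:value-024}; the content $(n-k-1,k,1)$ just forces the extra $3$ into the turn, halving each coefficient, and the rest is a careful rerun of the earlier casework.
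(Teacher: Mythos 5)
There is a genuine error in your proposal: the placement of the forced $3$. You put the single $3$ ``in the corner of the outer turn,'' which you further identify with the right end of the top row. Both placements fail. A $3$ anywhere in the top row contradicts Lemma \ref{lem:top-row} (which you yourself invoke to say the top row is all $1^\ast$). And for the two-turn, $h=0$ shape the corner of the outer turn is the rightmost box of the second row, which sits \emph{after} all the $2^\ast$ entries in reading order; a $3$ there would be the last letter of the $2/3$-subword, and since a $3$ is always an up-step, that walk ends at height $1$, not on the $x$-axis. You actually notice this ordering issue mid-sentence (``the bottom row $2$'s come before the turn $3$...'') and then declare the subword $2,\dots,2,3$ ends on the $x$-axis --- it does not, and this is precisely the point. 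The analogy with Lemma \ref{lem:where-3} breaks because in the one-turn case the $2^\ast$ entries sit in the column \emph{above} the turn corner and hence come after the $3$ in reading order; here there is no such column.

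The paper's proof instead places the $3$ in the bottommost staircase square (the only position compatible with semistandardness and a ballot $2/3$-walk, whose subword is then $3,2,\dots,2$). This changes the count structurally: the first $2^\ast$ in reading order is now the leftmost $2^\ast$ of the second row, which canonical form forces to be unprimed, and semistandardness then forces all $k$ of the $2$s to be unprimed in that row. So the only remaining binary choice is the prime on the first $1^\ast$ of the top row, giving at most $2$ tableaux --- not your ``at most $4$ candidates with the $3$ halving them.'' The rest (the $1/2$-walk threshold at $k=\min(w_1,w_2)$ forcing that $1^\ast$ unprimed, and the vanishing beyond it) does proceed as you sketch, but the casework has to be rebuilt on the correct configuration.
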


\begin{proof}
  Note that in any ballot tableau with a $3$, since the final row must consist of only $1^\ast$ entries, we must have the $3$ entry in the bottommost square.  Then, if the content is $(n-k-1,k,1)$, all $k$ of the $2^\ast$s are unprimed and in the second row by ballotness, canonical form, and semistandardness.  By semistandardness, the top row is of the form $1^\ast 1\cdots 1$, and so there are at most two ballot tableaux with this content for any $k$.
  
  First suppose that $k\geq \min(w_1,w_2)+1$, so $k\geq w_1+1$ or $k\geq w_2+1$. In the first case, the $1/2$-walk is at height $k=w_1+1$ at the end of the second row, and the top row $1^\ast111\cdots 1$ of length $w_1$ does not contain enough entries to bring the walk back to the $x$-axis, so there are no ballot tableaux. In the second case, there are more $2^\ast$ entries than there are boxes in the second row, so there are no ballot tableaux in this case and we have $f^{\nu}_{(n-k-1,k,1),\mu}=0$.

Next, suppose $k=\min(w_1,w_2)$. If $w_1=w_2$, then $(n-k-1,k,1) = (w_1,w_2,1)$ does not have valid content, so we may assume $w_1\neq w_2$.  If $\min(w_1,w_2)=w_2$, then $k=w_2 < w_1$, so there is one ballot tableau:
$$\begin{ytableau} \none & \none & \none & 1 & 1 & \hdots & 1 \\ 2 & 2 & \hdots & 2 \\ \none & 3 \end{ytableau} $$ (Note that the first $1$ in the top row must be unprimed by the canonical form condition in this case, and it is easily checked that the ballot walk condition holds.)
If instead $\min(w_1,w_2)=w_1$, since $k=w_1<w_2$, there is again one ballot tableau:
$$\begin{ytableau} \none & \none & \none & \none & \none & 1 & \hdots & 1 \\ 1 & \hdots & 1 & 2 & \hdots & 2 \\ \none & 3 \end{ytableau} $$
Indeed, after the second row, the $1/2$-walk is at $(x,w_1)$ for some $x\geq 1$, and so the $w_1$ entries $1^\ast111\cdots 1$ in the top row return the walk to the $x$-axis only when the first $1$ is unprimed. Thus in this case we have $f^{\nu}_{(n-k-1,k,1),\mu}=1$.

Finally, suppose $k\leq \min(w_1,w_2)-1$, so $k\leq w_1-1$ and $k\leq w_2-1$. We find two fillings:
$$\begin{ytableau} \none & \none & \none & \none & \none & 1^\ast & 1 & \hdots & 1 \\ 1 & \hdots & 1 & 2 & \hdots & 2 \\ \none & 3 \end{ytableau} $$
Since $k\leq w_2-1$, the second row consists of a string of at least one $1$ entry, followed by $k$ $2$s, so the $(1,2)-$lattice walk is at $(x,k)$ for $x\geq 1$ and $k\leq w_1-1$. Then the $w_1$ $1^\ast$ entries in the last row return the walk to the $x$-axis, regardless of whether the first $1$ is primed or unprimed. Thus there are $2$ ballot tableaux of this content, and $f^{\nu}_{(n-k-1,k,1),\mu}=2$.
\end{proof}

We can now prove the main result of this section.

\begin{theorem}\label{thm:h1_t2_flap}
Suppose $D$ and $E$ are frayed ribbon shapes of size $n$ with $2$ turns and parameters $w_1,h,w_2$ and $w_1',h',w_2'$ respectively, where $h=h'=0$ and $w_2,w_2'\ge 3$. Then if $w_1\neq w_1'$, we have $Q_D\not= Q_E$. 
\end{theorem}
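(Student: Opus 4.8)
The plan is to show that the Schur $Q$-expansion of a two-turn, height-$0$ frayed ribbon $D=\nu/\mu$ already determines its top width $w_1$, so that $Q_D=Q_E$ forces $w_1=w_1'$. Writing the parameters of $E$ as $(w_1',0,w_2')$, observe first that $Q_D=Q_E$ forces $|D|=|E|$, hence $n:=w_1+w_2+1=w_1'+w_2'+1$, and that, since the Schur $Q$-functions form a basis of $\Omega$, the coefficient of every $Q_\lambda$ must agree on the two sides. I would work with the two families of coefficients computed in Lemmas~\ref{lem:value-024} and~\ref{lem:value-012}: $f^{\nu}_{(n-k,k),\mu}$ for the two-row shapes and $f^{\nu}_{(n-k-1,k,1),\mu}$ for the near-two-row shapes. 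By those lemmas, viewed as functions of $k$ restricted to the values making the indexing shape a strict partition, the first family is a (possibly empty) string of $4$'s followed by a single $2$ — present precisely when $w_1+1\neq w_2$ — and then $0$'s, with the break at $k=\min(w_1+1,w_2)$; the second is a string of $2$'s followed by a single $1$ — present precisely when $w_1\neq w_2$ and $\min(w_1,w_2)\geq 2$ — and then $0$'s, with the break at $k=\min(w_1,w_2)$.

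First I would clear away the degenerate configurations, in each of which the ``break'' coefficient is suppressed because its indexing shape fails to be strict. If $w_1=1$, then (using $w_2\geq 3$) the near-two-row family is identically zero, and matching it against $E$ forces $w_1'=1$. If $w_1=w_2$, the near-two-row family is $2,2,\dots,2$ of length $w_1-2$, forcing $E$ into the same situation and hence $w_1'=w_2'$, so $w_1'=w_1$. If $w_1=w_2-1$, the two-row family is $4,4,\dots,4$ of length $w_2-2$, forcing $w_1'=w_2'-1$ and $w_2'=w_2$, so again $w_1'=w_1$. Thus I may assume $w_1\notin\{1,w_2-1,w_2\}$, and by the same arguments $w_1'\notin\{1,w_2'-1,w_2'\}$.

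In the remaining case, both break coefficients occur at genuine strict partitions, so reading off their positions gives
$$a:=\min(w_1+1,w_2)=\min(w_1'+1,w_2')\qquad\text{and}\qquad b:=\min(w_1,w_2)=\min(w_1',w_2').$$
In this range $a-b=1$ exactly when $w_1<w_2$ and $a-b=0$ exactly when $w_1>w_2$, and likewise for the primed parameters; hence the common value of $a-b$ decides whether $w_1<w_2$ or $w_1>w_2$ and forces the same comparison for $E$. If $w_1<w_2$ then $b=w_1$ and $b=w_1'$, so $w_1=w_1'$; if $w_1>w_2$ then $b=w_2=w_2'$, so $w_1=n-1-w_2=n-1-w_2'=w_1'$. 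In every case $w_1=w_1'$, contradicting the hypothesis, so $Q_D\neq Q_E$.

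I do not anticipate a conceptual obstacle here: the substantive combinatorics is already carried out in Lemmas~\ref{lem:value-024} and~\ref{lem:value-012} via the lattice-walk rule. The only delicate point is the bookkeeping around when an indexing shape $(n-k,k)$ or $(n-k-1,k,1)$ is actually a strict partition; it is precisely this that makes it necessary to isolate the degenerate configurations $w_1\in\{1,w_2-1,w_2\}$ before extracting $a$ and $b$. The standing hypotheses $w_2,w_2'\geq 3$ guarantee that the shapes are honest two-turn frayed ribbons and keep these small-parameter considerations under control.
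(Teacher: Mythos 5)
Your proof is correct and takes essentially the same route as the paper's: both arguments extract the unique (or absent) positions of the ``break'' coefficients $2$ and $1$ from Lemmas~\ref{lem:value-024} and~\ref{lem:value-012} and show that these pin down $w_1$. The paper organizes this as a three-way case analysis on which of $w_1,w_2$ (resp.\ $w_1',w_2'$) realizes the minimum, whereas you first isolate the degenerate configurations in which a break is suppressed by non-strictness of the indexing partition and then read off the comparison of $w_1$ with $w_2$ from $a-b$; the difference is purely organizational.
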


\begin{proof}
Since $D$ and $E$ both have height $0$, we have $w_2=n-1-w_1$ and $w_2'=n-1-w_1'$.  Thus, since $w_1\neq w_1'$ by assumption, we also have $w_2\neq w_2'$. 

\textbf{Case 1.} Suppose $\min(w_1,w_2)=w_1$ and $\min(w_1',w_2')=w_1'$.   Then by Lemma \ref{lem:value-012} the unique (or nonexistent) value of $k$ at which the coefficient $f^{\nu}_{(n-k,k),\mu}$ is $1$ is either different for $D$ and $E$, or exists for one of $D$ and $E$ and does not exist for the other.  Hence $Q_D\neq Q_E$ in this case.

\textbf{Case 2.} Suppose $\min(w_1,w_2)=w_2$ and $\min(w_1',w_2')=w_2'$.  Then since $w_2\neq w_2'$, the same proof as in Case 1 shows $Q_D\neq Q_E$.

\textbf{Case 3.} Suppose that one of the minima has subscript $1$ and the other $2$; without loss of generality suppose $\min(w_1,w_2)=w_1$ and $\min(w_1',w_2')=w_2'$.  If $w_1\neq w_2'$ then we are done as before; otherwise, suppose $w_1=w_2'$.  Then $\min(w_1'+1,w_2')=w_2'$ and $\min(w_1+1,w_2)$ is either $w_1+1$ or $w_2$.  But $w_1+1\neq w_2'$ since $w_1=w_2'$, and $w_2\neq w_2'$, so in either case we have $$\min(w_1+1,w_2)\neq \min(w_1'+1,w_2').$$ So, by Lemma \ref{lem:value-012}, the unique (or nonexistent) value of $k$ for which the coefficient $f^\nu_{(n-k-1,k,1),\mu}$ is $1$ is either different for $D$ and $E$, or exists for one of $D$ or $E$ and does not exist for the other.  Hence $Q_D\neq Q_E$.
\end{proof}

\subsection{Frayed ribbons with two turns and column height $1$}

We now show that all Schur $Q$ functions of frayed ribbons with column height $1$ are distinct from one another.  We use the same notation $w_1,h,w_2$ established at the start of Section \ref{sec:height0}, and note that throughout this section we will have $h=1$.

\begin{lemma}\label{lem:value-02468}
Let $D=\nu/\mu$ be a two-turn frayed ribbon of size $n$ with parameters $w_1,h,w_2$ where $h=1$.  Then for any $k$ such that $(n-k,k)$ is a shifted partition, the coefficient  $f^{\nu}_{\mu,(n-k,k)}$ of $Q_{(n-k,k)}$ in the expansion of $Q_{D}$ is given by:
$$f^{\nu}_{(n-k,k),\mu} = \begin{cases} 8\textrm{ if } 3\leq k \leq \min(w_1,w_2-1) \\    6\textrm{ if } k=\min(w_1,w_2-1)+1 \textrm{ and } w_1\neq w_2-1 \\   4\textrm{ if } k=\min(w_1,w_2-1)+1\textrm{ and } w_1=w_2-1 \\   2 \textrm{ if } k=\min(w_1,w_2-1)+2 \\   0 \textrm{ if } k\geq \min(w_1,w_2-1)+3 \end{cases}$$

In particular, there is a unique value of $k$ for which $f^{\nu}_{(n-k,k),\mu}$ is either $4$ or $6$.
\end{lemma}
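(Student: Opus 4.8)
The plan is to mirror the proofs of Lemmas~\ref{lem:value-024} and~\ref{lem:value-012}: by the lattice-walk Littlewood--Richardson rule, $f^{\nu}_{(n-k,k),\mu}$ equals the number of shifted ballot tableaux of shape $D$ and content $(n-k,k)$, and I would enumerate these directly. First I would pin down the rigid structure such a tableau $T$ must have. By Lemma~\ref{lem:top-row} the whole top row of $T$ consists of $1^\ast$'s with at most one $1'$, so all $k$ of the $2^\ast$ entries lie in the $w_2$ row, the single column box, or the frayed box. As in Lemma~\ref{L_Flap_Box}, the frayed box is first in reading order, hence unprimed; it cannot be a $1$, since then the box of the $w_2$ row directly above it would be forced to be $1'$, and then so would the leftmost box of that row, contradicting that a primed letter cannot repeat in a row. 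So the frayed box is a $2$, the first $1^\ast$ in reading order is the leftmost entry of the $w_2$ row, and that row reads $1,\dots,1,2^\ast,2,\dots,2$ (all $1$'s unprimed, at most one primed $2$, occurring at the transition).

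The remaining freedom is small. I would split into the case in which the column box is a $1^\ast$ and the case in which it is a $2^\ast$; this choice together with $k$ fixes the split of the $w_2$ row into $a$ unprimed $1$'s and $b$ copies of $2^\ast$. In each case one enumerates the semistandard, canonical-form fillings, using the inequality $\text{top-left}\le\text{column box}\le(w_2\text{-right})$ along the column at the turn to see which prime choices are forced; generically two independent prime choices survive in each case, giving $2^2=4$ each and $8$ in total. One then computes the $1/2$-walk of the reading word: after the frayed $2$ and the run of $a$ unprimed $1$'s the walk sits at height $0$ on the $x$-axis, the run of $b$ copies of $2^\ast$ lifts it to height $b$, the column box then raises or lowers the height by at most one depending on its value, and the $w_1$ top-row entries decrease the height to $0$ (after which they move rightward). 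Ballotness thus becomes an inequality comparing $w_1$ with the height reached just before the top row, and as $k$ grows this inequality fails first for the sub-families carrying a primed choice in the critical position and then for the rest. Substituting $n=w_1+h+w_2+1$ with $h=1$ and collecting the thresholds for the two cases produces the claimed piecewise values $8,6,4,2,0$, with the separate $w_1=w_2-1$ line arising because two thresholds then coincide; the uniqueness statement is then immediate from the piecewise formula.

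The main obstacle will be the boundary analysis when the $w_2$ row is short, that is, when $k$ is close to $w_2$ so that $a\in\{0,1\}$: there the $1/2$-walk's behaviour on the coordinate axes differs from the generic picture, and prime choices that are free generically can become forced or can change whether the walk returns to the $x$-axis. Pinpointing exactly where the value drops from $8$ to $6$ rather than $4$, and the off-by-one in the $w_1=w_2-1$ case, depends entirely on these effects, so the delicate part of the proof is carrying out this short-row case analysis in each of the two cases above without missing or double-counting any family of fillings.
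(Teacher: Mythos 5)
Your plan follows the paper's proof essentially step for step: the same structural reduction (only $1^\ast$'s in the top row by the top-row lemma, a forced $2$ in the frayed box, the case split on whether the single column box holds a $1^\ast$ or a $2'$), the same generic count of $2\times 4=8$ fillings from two independent prime choices in each case, and the same walk-based threshold comparison of $w_1$ against the height reached before the top row. The short-row boundary analysis you defer --- including the forced $2'$ directly above the frayed $2$ when the $w_2$ row contains at most one $1$, which is what produces the $6$ versus $4$ distinction and the special line $w_1=w_2-1$ --- is exactly where the paper's proof spends most of its effort, but your framework is set up correctly to carry it out.
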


\begin{proof}
  In order for a filling with content $(n-k,k)$ to be semistandard and ballot, the upper row must consist of only $1^\ast$ entries (Lemma \ref{lem:top-row}), and the bottommost square must contain a $2$. 

First, suppose that $3\leq k\leq \min(w_1,w_2-1)$.
Since $k\geq 3$, there are $8$ possible fillings: $$ \begin{ytableau} 
\none & \none & \none & \none & \none & \none & 1' & \hdots & 1 \\ 
\none & \none & \none & \none & \none & \none & 1^\ast \\ 
1 & \hdots & 1 & 2^\ast & 2 & \hdots & 2 \\ 
\none & 2 \end{ytableau} \hspace{1cm} 
\begin{ytableau} 
\none & \none & \none & \none & \none & \none & 1^\ast & \hdots & 1 \\ 
\none & \none & \none & \none & \none & \none & 2' \\ 
1 & \hdots & 1 & 1  & 2^\ast & \hdots & 2 \\ 
\none & 2 \end{ytableau} $$
Since $k\leq w_2-1$, the third row contains at least two $1$ entries, so after the string of $2^\ast$ entries in the reading word, the $1/2$-walk is at either $(x,k-1)$ or $(x-1,k-2)$ for $x\geq 1$, according to the two cases shown above. Then, since $k\leq w_1$, there are at at least $k-1$ entries with value $1$ to return the walk to the $x$-axis. Thus each of these fillings is ballot.

Next, suppose that $k=\min(w_1,w_2-1)+1$.  Note that since $w_1\ge 2$ and $w_2\ge 3$ by the definition of the two-turn shape, we have $k\ge 3$ again.  If $w_1\neq w_2-1$, and $k=w_1+1$, we claim  there are only $6$ ballot tableaux, as the type on the left above must have an unprimed $1$: 
$$ \begin{ytableau} 
\none & \none & \none & \none & \none & \none & 1' & \hdots & 1 \\ 
\none & \none & \none & \none & \none & \none & 1 \\ 
1 & \hdots & 1 & 2^\ast & 2 & \hdots & 2 \\ 
\none & 2 \end{ytableau} \hspace{1cm} 
\begin{ytableau} 
\none & \none & \none & \none & \none & \none & 1^\ast & \hdots & 1 \\ 
\none & \none & \none & \none & \none & \none & 2' \\ 
1 & \hdots & 1 & 1  & 2^\ast & \hdots & 2 \\ 
\none & 2 \end{ytableau} $$
Indeed, since $k<w_2$, we get $k\leq w_2-1$, so there are at least two $1$ entries in the third row in the left hand diagram above, and at least three in the right diagram.  Thus the $1/2$-walk is at either $(x,k-1)$ for some $x\ge 1$ for the left hand diagram, or $(x-1,k-2)$ for some $x\ge 2$ respectively after the string of $2^\ast$ entries. Since $k=w_1+1$, then if the string of $2^\ast$ entries ends in the third row, the following $1$ must be unprimed so that there are $k-1$ unprimed $1$ entries to return the subwalk to the $x$-axis. 

If $k=w_2=\min(w_1,w_2-1)+1$ with $w_1\neq w_2-1$, then we claim there are again $6$ ballot tableaux: 
$$ \begin{ytableau} \none  & \none & \none & \none & 1' & \hdots & 1 \\ \none & \none  & \none &\none & 1^\ast \\ 1 & 2' & 2 & \hdots & 2 \\ \none & 2   \end{ytableau} \hspace{1cm} \begin{ytableau} \none & \none & \none & \none &  1^\ast & \hdots & 1 \\  \none & \none & \none &\none & 2' \\ 1 & 1 & 2^\ast & \hdots & 2 \\ \none & 2  \end{ytableau} $$
Indeed, in the two possible forms of tableaux above, after the string of $2^\ast$ entries, the walk is at $(0,k-1)$ or $(0,k-2)$ respectively.  It is followed by a $1^\ast$ entry that moves the walk one step to the right and then at least $w_1-1\ge k-1$ entries with value $1$ to return the walk to the $x$ axis (since $k<w_1+1$).  Furthermore, since $k=w_2$, if the single box in the second row contains a $1^\ast$, then the first $2^\ast$ entry in the third row is directly above the $2$ in the bottommost box, and so it must be primed for the tableau to be semistandard. Hence there are exactly $6$ ballot tableaux.

If $k=w_1+1=w_2$, then we claim the only $4$ ballot tableaux are of the form: 
$$ \begin{ytableau} \none & \none & \none & \none &  1^\ast & \hdots & 1 \\  \none & \none & \none &\none & 2' \\ 1 & 1 & 2^\ast & \hdots & 2 \\ \none & 2  \end{ytableau} $$
We first show there cannot be a $1^\ast$ in the single box in the second row.  If so, since $k=w_2$ the first $2^\ast$ in the third row must be primed and is above the bottommost $2$. Then the walk of the word is at $(0,k-1)$ following the string of $2^\ast$ entries. Since $k=w_1+1$, the remaining string $1^\ast,1',1,\ldots,1$ of length $w_1+1$ does not return the walk to the $x$-axis.  Thus there is a $2'$ in the second row.  In this case, there are two $1$ entries in the third row, so the subwalk is at $(0,k-2)$ after the string of $2^\ast$ entries. It follows that the remaining single $1^\ast$ and $w_1-1 = k-2$ more entries with value $1$ return the walk to the $x$-axis. Hence there are exactly $4$ ballot tableaux in this case.

Now suppose that $k=\min(w_1,w_2-1)+2$. Note that, since $n=w_1+w_2+2$, if $|w_1-(w_2-1)|\leq 1$, the content $(n-k,k)$ is not valid. So $|w_1-(w_2-1)|\geq 2$. 

If $k=w_2+1$, then there are only $2$ possible ballot tableaux: 
$$\begin{ytableau} \none & \none & \none & \none &  1^\ast & \hdots & 1 \\  \none & \none & \none &\none & 2' \\ 1 & 2' & 2 & \hdots & 2 \\ \none & 2  \end{ytableau}$$
To see that these are both ballot, after the string of $2^\ast$ entries, the $1/2$-walk is at $(0,k-1)$, and the next $1^\ast$ moves the walk right one step to $(1,k-1)$. From above, we get $k-1= w_2\leq w_1-1$, so the remaining string of at least $k-1$ entries of value $1$ returns the walk to the $x$-axis. 

If instead $k=w_1+2$, then there are also $2$ ballot tableaux:
$$ \begin{ytableau} \none & \none & \none & \none & \none & 1 & \hdots & 1 \\ \none & \none & \none & \none &\none & 2' \\ 1 & \hdots & 1 & 2^\ast & \hdots & 2 \\ \none & 2   \end{ytableau} $$
In particular, by the inequality $|w_1-(w_2-1)|\geq 2$, since $w_1<w_2-1$ in this case we have $k-1=w_1+1\le w_2-2$, and so there are at least two $1$ entries in the third row. Thus, if the single box in the second row contains a $1^\ast$, then after the string of $2^\ast$ entries, the walk is at $(x,k-1)$ for some $x\geq 1$.  Then there are at most $w_1=k-2$ unprimed $1$ entries remaining, so the walk does not return to the $x$-axis.  It follows that the second row must contain a $2'$. In this case, after the string of $2^\ast$ entries, the walk is at $(x-1,k-2)$. Then the remaining string $1^\ast1\cdots 1$ of length $w_1=k-2$ returns the walk to the $x$-axis only if the first $1$ is unprimed. So there are two ballot tableaux with this content. 

Finally, suppose that $k\geq \min(w_1,w_2-1)+3$. If $k\geq w_1+3$, then the final string of $2^\ast$ entries has length at least $w_1+2$, but there are at most $w_1+1$ entries of value $1^\ast$ remaining, so the walk does not return to the $x$-axis. If $k\geq w_2+2$, then there are no semistandard fillings of $D$, since the top row and first box of the third row must contain $1^\ast$ entries. In either case, there are no ballot tableaux with this content.
\end{proof}

\begin{lemma}\label{lem:value-024-h1}
Let $D=\nu/\mu$ be a two-turn frayed ribbon of size $n$ with parameters $w_1,h,w_2$ where $h=1$.  Then for any $k$ such that $(n-k-2,k,2)$ is a shifted partition, the coefficient  $f^{\nu}_{\mu,(n-k-2,k,2)}$ of $Q_{(n-k-2,k,2)}$ in the expansion of $Q_{D}$ is given by:
$$f^{\nu}_{(n-k-2,k,2),\mu} = \begin{cases} 4 \textrm{ if } k \leq \min(w_1,w_2)-1 \\ 2 \textrm{ if } k = \min(w_1,w_2) \\ 0 \textrm{ if } k\geq\min(w_1,w_2)+1 \end{cases}$$
In particular, when $w_1\neq w_2$ there is a unique value of $k$ for which $f^{\nu}_{\mu,(n-k-2,k,2)}=2$.  When $w_1=w_2$, since $(w_1,w_2,2)$ is not a shifted partition, there is no value of $k$ for which $f^{\nu}_{\mu,(n-k-2,k,2)}=2$.
\end{lemma}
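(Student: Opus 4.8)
The plan is to enumerate the shifted ballot (Littlewood--Richardson) tableaux of shape $D=\nu/\mu$ with content $(n-k-2,k,2)$ directly via the lattice-walk rule of \cite{GLP}, in the same way as Lemmas~\ref{lem:value-012} and~\ref{lem:value-02468}. As elsewhere in this section I take the antipodal representative in which the long row (the $w_2$-row) is the second from the bottom and lies to the left of the height-$1$ vertical column, with the frayed box sitting below the second cell of the long row.

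First I would determine the skeleton of any such tableau $T$. By Lemma~\ref{lem:top-row} the top row consists of $1^\ast$ entries, at most one a $1'$. Now consider the two $3^\ast$ entries: they are not in the top row, so they lie among the frayed box, the long row, and the single cell of the vertical column. In the $2/3$-walk a $3^\ast$ contributes an up step (or, for a $3'$ off the axis, a left step), which can only be compensated by an unprimed $2$ read afterwards while off the $x$-axis; since in reading order the only cells after the long row are the vertical-column cell and the top row (which contains no $2^\ast$), a tableau whose two $3^\ast$'s both come at or after the end of the long row cannot be ballot. Hence one $3^\ast$ must be the frayed box, which --- being first in reading order --- is an unprimed $3$ by canonical form. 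Semistandardness then forces the other $3^\ast$ to be the last cell of the long row (a $3^\ast$ earlier in the long row, or in the vertical cell, would force a third $3^\ast$), equal to $3'$ or $3$; and for the $2/3$-walk to return to the axis the up step from this last $3^\ast$ must be undone by an unprimed $2$ in the vertical cell. Combined with canonical form (the first $1^\ast$ and first $2^\ast$ of the word both occur in the long row, hence are unprimed), this pins $T$ down to the two independent binary choices ``$3'$ vs.\ $3$ at the end of the long row'' and ``$1'$ vs.\ $1$ at the start of the top row'', with the long row reading $1^{\,w_2-k}\,2^{\,k-1}\,3^\ast$.

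It then remains to run the $1/2$-walk on these (at most) four candidates. Its relevant subword is $1^{\,w_2-k}\,2^{\,k}\,a_1\,1^{\,w_1-1}$ (the long row's $2^{\,k-1}$ together with the vertical cell's $2$), which reaches height $k$; the top row must then supply enough unprimed $1$'s to descend. A direct check gives: if $k\le\min(w_1,w_2)-1$ both choices of $a_1$ close the walk, for a count of $4$; if $k=\min(w_1,w_2)$ exactly one choice of $a_1$ survives (or is forced by canonical form when $w_2-k=0$), for a count of $2$; and if $k\ge\min(w_1,w_2)+1$ either $w_2-k<0$, so no tableau of this content exists, or the $1/2$-walk ends above the axis, for a count of $0$. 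The degenerate case $w_1=w_2$, in which $(w_1,w_2,2)$ is not strict and so the value $2$ never occurs, is handled as in Lemma~\ref{lem:value-012}.

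The main obstacle is fixing the skeleton in the second step --- in particular, eliminating the several placements and primings of the two $3^\ast$ entries that are a priori compatible with semistandardness and canonical form; this is precisely what the $2/3$-walk argument above is for. Once the skeleton is forced, the $1/2$-walk count is a routine variant of the arguments in Lemmas~\ref{lem:value-012}, \ref{lem:value-024}, and~\ref{lem:value-02468}.
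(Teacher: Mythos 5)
Your proposal is correct and follows essentially the same route as the paper's proof: both use Lemma \ref{lem:top-row} together with the $2/3$-walk and semistandardness to force the skeleton (a $3$ in the frayed box, a $3^\ast$ at the end of the long row, an unprimed $2$ in the vertical cell, and the long row reading $1^{w_2-k}2^{k-1}3^\ast$), and then count the surviving primings of the final $3^\ast$ and the leading $1^\ast$ of the top row via the $1/2$-walk in the same three regimes of $k$ relative to $\min(w_1,w_2)$. Your write-up is somewhat more explicit than the paper's about why the two $3^\ast$ entries must sit where they do, but the argument and the resulting counts coincide.
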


\begin{proof}
   For a filling with content $(n-k-2,k,2)$ to be semistandard and ballot, the first row must consist only of $1^\ast$ entries (Lemma \ref{lem:top-row}), the single box in the second row must contain a $2$ for the $2/3$-walk to end on the $x$ axis, the third row has reading word $1\cdots 12 \cdots 23^\ast$, and the fourth row must contain a $3$. Further, we must have $k\geq 3$, so that $\lambda$ is a strict partition. 

First, suppose that $k \leq \min(w_1,w_2)-1$. There are $4$ semistandard tableaux satisfying the conditions above: 
$$ \begin{ytableau} \none & \none & \none & \none & \none & \none & 1^\ast & \hdots & 1 \\ \none & \none & \none & \none & \none &\none & 2 \\ 1 & \hdots & 1 & 2 & \hdots & 2 & 3^\ast \\ \none & 3   \end{ytableau} $$
We show that these four tableaux are all ballot.  Since $k\geq 3$, the $2/3$-walk returns to the $x$-axis. Since $k\leq w_2-1$, there is at least one $1$ entry in the third row, so following the final $2$ in the reading word, the $1/2$-walk is at $(x,k)$ for some $x\geq 1$. Since $k\leq w_1-1$, there are at least $k$ unprimed $1$ entries in the first row, so the $1/2$-walk returns to the $x$-axis. Hence there are $4$ ballot tableaux with this content. 

Next, suppose that $3\leq k = \min(w_1,w_2)$. If $w_1=w_2$, then $n-k-2=n-w_1-2=w_2=k$, so $(n-k-2,k,2)$ is not a valid content. If $k=w_1<w_2$, then we get two fillings: 
$$ \begin{ytableau} \none & \none & \none & \none & \none & \none & 1 & \hdots & 1 \\ \none & \none & \none & \none & \none &\none & 2 \\ 1 & \hdots & 1 & 2 & \hdots & 2 & 3^\ast \\ \none & 3   \end{ytableau} $$
Since $k<w_2$, there is at least one $1$ entry in the third row, so after the $2$ in the second row, the $1/2$-walk is at $(x,k)$ for some $x\geq 1$. Then, since $k=w_1$, the top row must consist of only unprimed $1$ entries for the subwalk to return to the $x$-axis. Thus there are $2$ valid fillings.

If instead $k=w_2<w_1$, then there are two such fillings: 
$$ \begin{ytableau}  \none & \none & \none & 1 & \hdots & 1 \\  \none & \none &\none & 2 \\  2 & \hdots & 2 & 3^\ast \\ \none & 3   \end{ytableau} $$
Since $k=w_2$, the third row contains no $1$ entries, so the first $1$ entry in the reading word is in the first row, and must be unprimed for the tableau to be in canonical form. Following the string of $2$ entries, the $1/2$-walk is at $(0,k)$. Since $k<w_1$, there are enough $1$ entries following the $2$ entries to return the walk to the $x$-axis. Therefore there are $2$ valid fillings in this case.

Finally, suppose that $k\geq \min(w_1,w_2)+1$. If $\min(w_1,w_2)=w_1$ so that $k\geq w_1+1$, then after the last $2$, the $1/2$-walk is at $(x,k)$ for some $x\geq 0$. Then the remaining $w_1<k$ entries of $1$ do not return the walk to the $x$-axis. If $\min(w_2,w_2)=w_2$ so that $k\geq w_2+1$, then since the $3$ entries must be in the third and fourth rows, there is a $2$ entry in the top row, which cannot happen by Lemma \ref{lem:top-row}.  So there are no valid fillings with this content.
\end{proof}

We can now prove distinctness for height $1$ frayed ribbons with two turns. 

\begin{theorem}\label{thm:h2_t2_flap}
If $D$ and $E$ are frayed ribbon shapes of size $n$ with $2$ turns, column height $1$, and different first row width $w_1$, then $Q_D\neq Q_E$.
\end{theorem}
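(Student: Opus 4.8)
The plan is to mirror the proof of Theorem~\ref{thm:h1_t2_flap}, replacing the two height-$0$ coefficient computations by the two height-$1$ computations in Lemmas~\ref{lem:value-02468} and~\ref{lem:value-024-h1}. Since $h = h' = 1$ we have $n = w_1 + w_2 + 2 = w_1' + w_2' + 2$, so the hypothesis $w_1 \neq w_1'$ forces $w_2 \neq w_2'$ as well; as usual we may assume $w_2, w_2' \geq 3$ by Proposition~\ref{prop:antipodal}. Suppose for contradiction that $Q_D = Q_E$, so that all the coefficients $f^{\nu}_{\mu,\cdot}$ agree.

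First I would extract two invariants from $Q_D$. From Lemma~\ref{lem:value-024-h1}, the unique value of $k$ at which the coefficient of $Q_{(n-k-2,k,2)}$ equals $2$ recovers $\min(w_1, w_2)$ when $w_1 \neq w_2$, while the absence of such a $k$ signals $w_1 = w_2$. From Lemma~\ref{lem:value-02468}, the unique value of $k$ at which the coefficient of $Q_{(n-k,k)}$ lies in $\{4,6\}$ recovers $\min(w_1, w_2 - 1) + 1$, and the value $4$ versus $6$ records whether or not $w_1 = w_2 - 1$. Both invariants are therefore determined by $Q_D$, hence equal for $D$ and $E$.

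Then I would split into cases according to which of $w_1, w_2$ realizes $\min(w_1,w_2)$, exactly as in the three cases of Theorem~\ref{thm:h1_t2_flap}. If $\min(w_1,w_2) = w_1$ and $\min(w_1',w_2') = w_1'$, the first invariant forces $w_1 = w_1'$ (handling separately the degenerate possibility $w_1 = w_2$ and $w_1' = w_2'$, which would give $w_1 = (n-2)/2 = w_1'$), a contradiction. The case $\min(w_1,w_2) = w_2$ and $\min(w_1',w_2') = w_2'$ is symmetric, using $w_2 \neq w_2'$. In the mixed case, say $\min(w_1,w_2) = w_1$ and $\min(w_1',w_2') = w_2'$ with strict inequalities: if $w_1 \neq w_2'$ the first invariant again yields a contradiction; and if $w_1 = w_2'$, then since $w_2 \neq w_2'$ we get $w_1 < w_2$, so the second invariant equals $w_1 + 1$ for $D$ but $w_2' = w_1$ for $E$, a contradiction.

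The substantive content is entirely in the two lemmas; the theorem itself is a bookkeeping argument layered on top of them. The one point that needs care — and the reason two coefficient families are required rather than one — is precisely the mixed subcase $w_1 = w_2'$: there the $Q_{(n-k-2,k,2)}$ coefficients coincide for $D$ and $E$, and only the $Q_{(n-k,k)}$ family separates them. Beyond that, the only routine checks are that in each subcase the cited $k$-value actually indexes a valid shifted partition (using $w_1 \geq 2$, $w_2 \geq 3$, and $w_1 < w_2$ where that was claimed), so that the coefficient in question is genuinely read off by the relevant lemma rather than falling outside its stated range.
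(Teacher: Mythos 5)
Your proposal is correct and follows essentially the same route as the paper: both extract $\min(w_1,w_2)$ from the unique coefficient $2$ among the $Q_{(n-k-2,k,2)}$ terms (Lemma~\ref{lem:value-024-h1}) and then use the unique coefficient in $\{4,6\}$ among the $Q_{(n-k,k)}$ terms (Lemma~\ref{lem:value-02468}) to rule out the remaining possibility $w_1=w_2'$. The only difference is organizational --- the paper collapses your three cases by noting immediately that $\min(w_1,w_2)=\min(w_1',w_2')$ together with $w_1+w_2=w_1'+w_2'$ forces $w_1=w_2'$ and $w_2=w_1'$ --- so no further comparison is needed.
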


\begin{proof}
Suppose that $Q_D=Q_E$.  Let $w_1,h=1,w_2$ be $D$'s parameters and $w_1',h=1,w_2'$ be $E$'s parameters.  Then by Lemma \ref{lem:value-024-h1}, among the terms of the form $Q_{(n-k-2,k,2)}$ in $Q_D$'s expansion, there is either a unique coefficient of $2$ or none.  If there is a unique one then it occurs at $k=\min(w_1,w_2)$ and similarly for $Q_E$, so since $w_1\neq w_1'$ we must have $w_1=w_2'$ (and $w_2=w_1'$).

But then, by Lemma \ref{lem:value-02468}, there is a unique value of $k$ for which the coefficient of $Q_{(n-k,k)}$ in $Q_D$ is either $4$ or $6$, namely, $k=\min(w_1,w_2-1)+1$, and similarly for $E$.  Thus $w_1=w_2'-1$, a contradiction.
\end{proof}

\subsection{Distinguishing height $0$ from height $1$}

In the previous two sections we showed that all height $0$ two-turn frayed ribbons have distinct Schur $Q$ functions, and that all height $1$ such shapes also have distinct Schur $Q$ functions.  We now show that these two classes are also all distinct from each other.  To do so, we in fact prove a more general statement - that any shape with height $0$ actually has a Schur $Q$ function that is distinct from all other frayed ribbons.

\begin{prop}
If $D$ is a frayed ribbon shape of size $n$ with two turns and column height $0$, then $Q_D\neq Q_E$ for any frayed ribbon shape $E\neq D$, other than its antipodal $D^a$.
\end{prop}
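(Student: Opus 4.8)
The plan is to play two facts against each other: by Lemmas~\ref{lem:value-024} and~\ref{lem:value-012} the $Q$-expansion of a height-$0$ two-turn frayed ribbon is very ``flat,'' whereas every two-turn frayed ribbon of positive column height has something ``taller'' in its expansion. By Corollary~\ref{cor:turns} any $E$ with $Q_E=Q_D$ is itself a two-turn frayed ribbon; replacing $E$ by $E^a$ if necessary, write its parameters as $w_1',h',w_2'$ with $w_2'\ge 3$. If $h'=0$, then Theorem~\ref{thm:h1_t2_flap} forces $w_1=w_1'$, hence $w_2=w_2'$ since both equal $n-1$ minus the corresponding width, so $D=E$. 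It therefore remains to rule out $Q_D=Q_E$ when $h'\ge 1$.

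The first step is to pin down $Q_D$ exactly: I claim every $Q_\nu$ appearing in it has $\ell(\nu)\le 3$ with $\nu_3\le 1$, and every coefficient is at most $4$. The coefficients of $Q_{(n-k,k)}$ and $Q_{(n-k-1,k,1)}$ are given by Lemmas~\ref{lem:value-024} and~\ref{lem:value-012} (and those of $Q_{(n)}$ and $Q_{(n-1,1)}$, not covered there, are small by an easy direct count). To exclude any $\nu$ with $\nu_3\ge 2$ or $\ell(\nu)\ge 4$, observe that a height-$0$ shape consists only of the top row, the single long row, and the frayed box: Lemma~\ref{lem:top-row} makes the top row all $1^\ast$, so any $3$ or $4$ must occur at the right end of the long row or in the frayed box, and tracking the relevant $2/3$- and $3/4$-subwords through the lattice walk shows that in every such case the reading word fails to be ballot. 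Hence $Q_D$ has all coefficients $\le 4$ and support concentrated on partitions of length at most $3$ with third part at most $1$.

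The second step is to show that any two-turn frayed ribbon $E$ with $h'\ge 1$ violates one of these restrictions. When $h'=1$ this is visible from Lemma~\ref{lem:value-02468}: the coefficient of $Q_{(n-k,k)}$ equals $6$ or $8$ for a suitable $k$ unless $(w_1',w_2')=(2,3)$, and the one exceptional shape has $n=7$ and is handled by the computer verification for $n\le 11$ (alternatively, Lemma~\ref{lem:value-024-h1} supplies a term $Q_{(n-k-2,k,2)}$ with third part $2$ whenever $\min(w_1',w_2')\ge 3$). For $h'\ge 2$ I would establish the analogue by a new lattice-walk argument: put a $3'$ in the bottom box of the column and an unprimed $3$ at the right end of the long row, fill the column immediately above the $3'$ with a short block of $2$'s, and fill the long row compatibly; the $2/3$-walk then returns to the $x$-axis precisely because the two ``up'' steps coming from $3,3'$ are cancelled by the ``down'' steps of the column's $2$'s, so the resulting content $\nu$ has $\nu_3=2$. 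This construction needs about one (when $w_2'\ge 4$) or two (when $w_2'=3$) free column boxes above the bottom one, so it covers every $h'\ge 2$ except the family $w_2'=3,\ h'=2$, which I would treat by hand --- either with a different placement of the two $3$'s using a long row $1,2,3$ together with the frayed box, or by exhibiting a two-part coefficient $\ge 5$ there. In every case $Q_E$ contains some $Q_\nu$ that is forbidden in $Q_D$ (either $\nu_3\ge 2$, or $\ell(\nu)\ge 4$, or a coefficient exceeding $4$), so $Q_D\ne Q_E$.

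The real obstacle is this last step. The explicit coefficient formulas in the paper stop at column height $1$, so the $h'\ge 2$ case demands genuinely new lattice-walk bookkeeping, and the boundary family $w_2'=3,\ h'=2$ contains arbitrarily large shapes, so the computer check does not reach it and it must be analyzed separately. A cleaner route --- which I would attempt first --- is to prove the single uniform statement that $\max_\nu f^\nu_{\mu\nu}=4$ for a height-$0$ two-turn frayed ribbon while $\max_\nu f^\nu_{\mu\nu}\ge 5$ for every two-turn frayed ribbon of positive column height; combined with the height-$0$ flatness from the first step, this would give the proposition at once.
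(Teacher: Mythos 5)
Your reduction to the case of a two-turn $E$ with $h'\ge 1$ is correct and matches the paper (via Corollary~\ref{cor:turns} and Theorem~\ref{thm:h1_t2_flap}), but the core of the argument --- ruling out $h'\ge 1$ --- is not complete as written. For $h'\ge 2$ you explicitly defer to ``genuinely new lattice-walk bookkeeping,'' your construction placing a $3'$ in the column and a $3$ in the long row is only sketched (you do not verify semistandardness, canonical form, or the $1/2$-walk, only gesture at the $2/3$-walk), and the boundary family $w_2'=3,\ h'=2$ is an infinite family that you leave ``to be treated by hand.'' Likewise your first step, that every $\nu$ in the support of $Q_D$ has $\ell(\nu)\le 3$ and $\nu_3\le 1$ with all coefficients at most $4$, is plausible but asserted rather than proved (Lemmas~\ref{lem:value-024} and~\ref{lem:value-012} only cover contents of the form $(n-k,k)$ and $(n-k-1,k,1)$; excluding everything else is an additional argument you only outline). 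So the proposal is a plan with a genuine hole rather than a proof.

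The paper's proof is considerably more economical and avoids all of this: it compares the single coefficient of $Q_{(n-3,2,1)}$. For $D$ of height $0$ this coefficient is the $k=2$ value in Lemma~\ref{lem:value-012}, hence at most $2$ (it is $1$ or $2$ since $\min(w_1,w_2)\ge 2$). For any two-turn $E$ of positive column height the paper exhibits three explicit ballot tableaux of content $(n-3,2,1)$ --- two with the $3$ in the corner of the upper turn, a $2$ above it, a $2$ in the frayed box, and the $1^\ast$ at the top of the column either primed or unprimed; and one with the $3$ in the frayed box and both $2$'s in the long row --- so the coefficient there is at least $3$. This works uniformly for every $h'\ge 1$, with no case split on $h'$ and no appeal to computer verification. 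If you want to salvage your approach, the lesson is that you do not need the full ``flatness'' of $Q_D$ or a maximal-coefficient dichotomy; one well-chosen coefficient for which the height-$0$ lemmas already give an upper bound, paired with an explicit lower bound for positive height, suffices.
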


\begin{proof}
  We already know that if $E$ does not have two turns or if it has two turns and column height $0$, $Q_D\neq Q_E$ from Theorem \ref{thm:h1_t2_flap} and Corollary \ref{cor:turns}.  Thus it suffices to consider the case in which $E$ has $2$ turns and height greater than $0$.  We may assume without loss of generality that $E$'s second-to-bottom row has more than $2$ entries by Proposition \ref{prop:antipodal}, and simply show that $Q_D\neq Q_E$ since $E$ will not be the antipodal shape $D^a$ under this assumption.
  
  Consider the coefficient of $Q_{(n-3,2,1)}$ in the straight shape Schur $Q$ expansion of both $Q_D$ and $Q_E$. For $D$, this coefficient is either $1$ or $2$ by Lemma \ref{lem:value-012}.
  
  For $E$, on the other hand, we show that this coefficient is at least $3$.  Indeed, there are always two Littlewood-Richardson tableaux of the form shown below at left, in which the $3$ is in the corner, one $2$ is in the bottommost square, and the other $2$ is above the $3$ (and unprimed so that the $2/3$ word is ballot).  The $1$ at the top of the column can be either primed or unprimed and the $1/2$ word will still be ballot, so this gives two possibilities.  Then, there is also always at least one of the form shown below at right, in which a $3$ is in the bottommost square and the $2$'s are in the next to last row; if the $1$ in the column is unprimed then this guarantees that the $1/2$ word is ballot since there is at least one $1$ before the $2$s and at least two $1$s after them to bring the walk back down to the $x$-axis.
  $$\begin{ytableau}
    \none & \none & \none & 1^\ast & 1 & 1 \\
    \none & \none & \none & 2 \\
    1 & 1 & 1 & 3 \\
    \none & 2
  \end{ytableau}\hspace{2cm}
  \begin{ytableau}
    \none & \none & \none & 1' & 1 & 1 \\
    \none & \none & \none & 1 \\
    1 & 1 & 2 & 2 \\
    \none & 3
  \end{ytableau}
  $$
  
  Thus the coefficient of $Q_{(n-3),2,1}$ in $Q_D$ is at most $2$, and its coefficient in $Q_E$ is at least $3$, and therefore $Q_D\neq Q_E$ as desired.
\end{proof}

From this proof and Theorems \ref{thm:h1_t2_flap} and \ref{thm:h2_t2_flap}, we can conclude that the frayed ribbon shapes with two turns and height either $1$ or $2$ give another collection of distinct skew Schur $Q$-functions:

\begin{corollary}
 The skew Schur $Q$ functions $Q_D$, where $D$ ranges over all two-turn frayed ribbon shapes of height $1$ or $2$, are all distinct.
\end{corollary}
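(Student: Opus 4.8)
The plan is to assemble this corollary from the three preceding subsections. Suppose $D$ and $E$ are two-turn frayed ribbons of column height $1$ or $2$ with $Q_D=Q_E$; we must show they agree, equivalently that their canonical representatives agree. Since $Q_D$ records the degree $n$ and, by Corollary~\ref{cor:turns}, the number of turns, we have $|D|=|E|=n$ and both shapes have two turns; and by Proposition~\ref{prop:antipodal} we may replace either shape by its antipodal reflection whenever convenient. Thus it suffices to work with the canonical representatives, writing $(w_1,h,w_2)$ and $(w_1',h',w_2')$ for their parameters as in Section~\ref{sec:height0} (so $w_2,w_2'\ge 3$ and $h,h'\in\{0,1,2\}$), and to deduce from $Q_D=Q_E$ that $(w_1,h,w_2)=(w_1',h',w_2')$, which pins down the shape.

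First I would show $h=h'$. The proof of the proposition immediately preceding this corollary shows that the coefficient of $Q_{(n-3,2,1)}$ in $Q_D$ lies in $\{1,2\}$ when $h=0$ and is at least $3$ when $h\ge 1$, so that coefficient already detects column height $0$. A parallel ballot analysis separates $h=1$ from $h=2$: arguing as in the proof of Lemma~\ref{lem:where-3}, a ballot tableau of a column-height-$1$ two-turn frayed ribbon admits only a bounded number of large entries, so the straight shapes $\nu$ for which $Q_\nu$ occurs in the expansion of $Q_D$ are more restricted than in the column-height-$2$ case; one pins this down to a single straight-shape coefficient whose value in $Q_D$ distinguishes $h=1$ from $h=2$. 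Hence $Q_D=Q_E$ forces $h=h'$, and as the shapes have column height $1$ or $2$ we may assume $h=h'\in\{1,2\}$.

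It then remains to recover $w_1$ from $Q_D$. When $h=h'=1$ this is precisely Theorem~\ref{thm:h2_t2_flap}: a column-height-$1$ two-turn frayed ribbon of size $n$ is determined by $w_1$ (since $w_2=n-2-w_1$), and distinct $w_1$ give distinct $Q$, so $w_1=w_1'$ and $D=E$. When $h=h'=2$ I would run the same strategy once the column-height-$2$ analogue of Theorem~\ref{thm:h2_t2_flap} is available; I would obtain it by extending the explicit Littlewood-Richardson computations of Lemmas~\ref{lem:value-02468} and~\ref{lem:value-024-h1} (and their height-$0$ counterparts Lemmas~\ref{lem:value-024} and~\ref{lem:value-012}) to column height $2$: the ballot analysis is structurally the same, now with one extra row in the long column contributing one further family of straight-shape contents, and from the resulting coefficient patterns one recovers $w_1$ exactly as in the proofs of Theorems~\ref{thm:h1_t2_flap} and~\ref{thm:h2_t2_flap}. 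A possible shortcut is instead to verify that the antipodal reflection carries every column-height-$2$ two-turn frayed ribbon into the already-settled column-height-$0$ or $1$ family and then quote the preceding proposition together with Theorems~\ref{thm:h1_t2_flap} and~\ref{thm:h2_t2_flap}. I expect the column-height-$2$ step to be the main obstacle: the extended Littlewood-Richardson bookkeeping is longer and more delicate than the height-$0$ and height-$1$ analyses, since straight shapes with a third part as large as $3$ now enter, and the shortcut would require a precise description of how the antipodal map acts on the parameters $(w_1,h,w_2)$, which is not transparent from the definitions.
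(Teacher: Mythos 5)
There is a genuine gap here, and it stems from a (quite understandable) misreading of what the corollary asserts. The phrase ``height $1$ or $2$'' in the corollary refers to the two height classes actually analyzed in Theorems \ref{thm:h1_t2_flap} and \ref{thm:h2_t2_flap} --- namely column height $0$ and column height $1$ in the notation of the subsection titles (the theorem labels \texttt{h1} and \texttt{h2} betray an older off-by-one indexing). Under that reading the corollary requires no new computation at all: the unlabeled proposition immediately preceding it shows that every two-turn height-$0$ shape has a Schur $Q$ function distinct from that of \emph{every} other frayed ribbon (in particular from every height-$1$ shape and from every other height-$0$ shape, the latter also by Theorem \ref{thm:h1_t2_flap}), and Theorem \ref{thm:h2_t2_flap} shows the height-$1$ shapes are pairwise distinct; assembling these three facts is the entire proof. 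Your literal reading --- column heights $1$ and $2$ --- commits you to two statements the paper never establishes: pairwise distinctness within the height-$2$ family, and separation of height $1$ from height $2$. You only sketch both (``extend the computations of Lemmas \ref{lem:value-02468} and \ref{lem:value-024-h1}''; ``one pins this down to a single straight-shape coefficient''), and you correctly flag them as the main obstacle; but without carrying them out the argument is not a proof. Note also that the coefficient of $Q_{(n-3,2,1)}$, which you borrow from the preceding proposition, only separates $h=0$ from $h\ge 1$; it says nothing about $h=1$ versus $h=2$, and no substitute coefficient is exhibited.

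Separately, the proposed ``shortcut'' is false. The antipodal reflection of a two-turn frayed ribbon with $h$ boxes strictly between the turns is the shape obtained by exchanging rows and columns: the two long rows become two long columns and the vertical middle segment of length $h$ becomes a horizontal middle segment of the same length $h$. It is therefore never a shape of the form parametrized in Section \ref{sec:height0} (its second-to-bottom row has length at most $2$), and in particular the antipodal map does not carry column-height-$2$ shapes into the already-settled height-$0$ or height-$1$ families. Proposition \ref{prop:antipodal} lets you pass to the canonical representative of each antipodal pair, as the paper does, but it cannot be used to change the number of squares between the two turns.
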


\section{Further Observations}\label{sec:conclusion}

We conclude here with several examples and observations pertaining to natural generalizations of Conjecture \ref{conj:frayed} and Theorem \ref{thm:main}.  All of the examples in this section were found using Sage \cite{sagemath}.

\subsection{Equality with two boxes on the staircase}

It is natural to ask whether the ``frayed'' aspect of frayed ribbons is in fact enough to distinguish any Schur $Q$ functions that are not antipodal.  More specifically, perhaps any two distinct non-antipodal connected skew shifted shapes having at least two boxes on the staircase diagonal have distinct Schur $Q$ functions.  The following counterexample shows that this generalization does not hold.

\begin{example}
 There exists a non-antipodal pair of shapes having at least two boxes on the staircase which have equal Schur $Q$ functions (but are not frayed ribbons).   The smallest such pair has size $8$ and is shown below.
$$\begin{ytableau}
 \none & \none & \empty \\
  \none & \none & \empty \\
  \empty & \empty & \empty \\
 \empty & \empty \\
 \none & \empty 
\end{ytableau}\hspace{2cm} \hspace{0.5cm}
\begin{ytableau}
  \none & \none & \none & \empty \\
  \empty & \empty & \empty & \empty \\
 \empty & \empty \\
 \none & \empty 
\end{ytableau}$$
Their Schur-$Q$ expansions  are $$Q_{(6, 5, 4, 2, 1)/(5, 4, 1)}=Q_{(6, 5, 2, 1)/(5, 1)}=Q_{(6,2)}+2Q_{(5,3)}+2Q_{(5,2,1)}+2Q_{(4,3,1)}.$$
\end{example}

\subsection{A frayed ribbon and a non-frayed near-ribbon}

The following example shows that the Schur $Q$ functions of frayed ribbons are not necessarily distinct from those of other near-ribbons that are not frayed.

\begin{example}
We have $$Q_{(4,3,1)/(3)}=Q_{(4,3)/(2)}=Q_{(4,1)}+Q_{(3,2)},$$ and the shapes $(4,3,1)/(3)$ and $(4,3)/(2)$ are a frayed ribbon and near-ribbon respectively:

$$\begin{ytableau}
 \none & \none & \none & \empty \\
  \none & \empty & \empty & \empty \\
  \none & \none & \empty & \none 
\end{ytableau}\hspace{2cm} \hspace{0.5cm}
\begin{ytableau}
 \none & \none & \empty & \empty \\
  \none & \empty & \empty & \empty 
\end{ytableau}$$
\end{example}

\subsection{Equality of non-frayed near-ribbons}

There exist pairs of near-ribbons that are \textit{both} not frayed in which equality holds, and their Schur $Q$ functions are not trivially equal by being antipodal, transposed, or antipodal transposed shapes.  An example is given below.
\begin{example}
  We have $$Q_{(7,6,5,3)/(6,5,2)}=Q_{(7,6,5,1)/(6,4,1)}=3Q_{(4, 3, 1)} + 3Q_{(5, 2, 1)} + 5Q_{(5,3)} + 4Q_{(6,2)} + Q_{(7, 1)},$$ and the shapes $(7,6,5,3)/(6,5,2)$ and $(7,6,5,1)/(6,4,1)$ are non-frayed near-ribbons that are not equivalent under any combination of the antipodal and transpose operations:
$$\begin{ytableau}
  \none & \none & \none & \empty \\
  \none & \none & \none & \empty \\
  \none & \empty & \empty & \empty \\
  \empty & \empty & \empty
\end{ytableau}\hspace{2.5cm}
\begin{ytableau}
  \none & \none & \none & \empty \\
  \none & \none & \empty & \empty \\
  \empty & \empty & \empty & \empty \\
  \empty & \none & \none & \none  
\end{ytableau}$$
\end{example}

\subsection{Schur $Q$-positive differences with ribbons}

Our results also shed light on the more general problem of determining when the difference of two skew Schur $Q$ functions is Schur $Q$-positive.  In particular, we have the following observation.

\begin{lemma}\label{lem:pos-diff}
Let $D$ be a shifted skew shape, and suppose $E$ is formed from $D$ by moving the topmost $k$ rows each one unit to the right for some $k$.  Then $Q_E-Q_D$ is Schur $Q$-positive.
\end{lemma}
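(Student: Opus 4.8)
The plan is to combine the shifted Littlewood--Richardson rule of \cite{GLP} with a direct injection on ballot tableaux. Since $Q_{\lambda/\mu}=\sum_\nu f^\lambda_{\mu\nu}Q_\nu$ and $f^\lambda_{\mu\nu}$ counts shifted ballot tableaux of shape $\lambda/\mu$ and content $\nu$, it is enough to exhibit, for each strict partition $\nu$, an injection from the shifted ballot tableaux of shape $D$ and content $\nu$ to those of shape $E$ and content $\nu$ (we assume, as is implicit in the hypothesis, that $E$ is again a shifted skew shape). Write $D=\lambda/\mu$ and let $k$ be the number of top rows being shifted; if $k$ equals the number of rows of $D$, then $E$ is merely a horizontal translate of $D$, so $Q_E=Q_D$ and there is nothing to prove. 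Hence we may assume $1\le k<\ell(\lambda)$, so that row $k+1$ of $D$ is nonempty.

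The injection is the ``copy'' map: given a shifted ballot tableau $T$ of $D$, let $T'$ be the filling of $E$ obtained by writing, in each row of $E$, the same left-to-right sequence of entries that appears in the corresponding row of $T$. This is well defined because shifting rows does not change the number of boxes in any row, and the map $T\mapsto T'$ is plainly injective. The key observation is that $T'$ has the \emph{same reading word} as $T$: the reading word concatenates the rows from bottom to top and reads each from left to right, and the per-row sequences are unchanged. Consequently $T'$ is automatically in canonical form and its reading word is ballot, so it only remains to verify that $T'$ is semistandard.

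The row conditions hold for $T'$ since each row still carries the same weakly increasing sequence, with primes in the same positions, as in $T$. For the column conditions, observe that for $i<k$ rows $i$ and $i+1$ of $E$ are both translated one step to the right, and for $i>k$ neither is moved, so the only vertical adjacencies that change are those between rows $k$ and $k+1$. There, row $k$ has moved one step right relative to row $k+1$, so the entry of row $k$ that $T'$ places directly above a box $b$ of row $k+1$ lies one position to the left, within row $k$, of the entry $T$ placed above $b$ --- or, when $T$ placed no entry above $b$ because row $k$ ended there, it equals the entry $T$ placed above the left neighbour of $b$. In the first situation weak left-to-right monotonicity of row $k$ makes the new entry $\le$ the old one, which already obeyed the column condition against $b$; in the second situation the new entry is $\le$ the entry of $b$ by the row condition. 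A short check using the primed/unprimed repetition rules upgrades these weak inequalities to the strict separations required by semistandardness, and the one configuration that could escape the analysis --- a genuinely new constraint forced onto the leftmost box of row $k+1$ --- would require $\mu_k=\mu_{k+1}$ and $\lambda_k=1$, hence, by strictness of $\lambda$ and $\mu$, an empty row $k+1$, contrary to assumption. Therefore $T'$ is a shifted ballot tableau of shape $E$ and content $\nu$, which proves the lemma.

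The main obstacle is precisely this last verification: ensuring that shifting row $k$ to the right never produces a vertical adjacency in $E$ that is not dominated by an adjacency already present in $D$ together with the weak monotonicity of row $k$, and in particular disposing of the left-end boundary case by invoking the strictness of the two partitions.
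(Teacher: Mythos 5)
Your proof is correct and follows essentially the same route as the paper's: the row-shift map on ballot tableaux preserves the reading word (hence ballotness and canonical form) and content, so one only needs to verify semistandardness at the interface between rows $k$ and $k+1$, which follows by the same transitivity argument through the entry formerly below-and-left. Your treatment of the left-end boundary case via strictness of $\lambda$ and $\mu$ is a detail the paper's proof glosses over, but the argument is the same injection.
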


\begin{proof}
  Let $T$ be a ballot tableau of shape $D$.  Then if we shift the topmost $k$ rows each one unit to the right in $T$, we obtain a tableau $T'$ of shape $E$.  We claim $T'$ is ballot.  Its reading word is the same as that of $T$, so it still has the ballot reading word and canonial form conditions.  
  
  To see that it is semistandard, certainly it is still semistandard across each row, and for semistandardness in columns we only need to check the adjacent squares between the $k$-th row from the top and the $k+1$st.  If $i^\ast$ is a label in the $k$-th row in $T$, then it is less than (or possibly equal to, if primed) the entry $j^\ast$ below it in $T$, which in turn is less than (or equal to, if unprimed) the entry $r^\ast$ to the right of $j$ in $T$, if such an entry exists.  In $T'$, $i^\ast$ is now above $r^\ast$, and by transitivity, the semistandard condition is still satisfied.
  
  It follows that the Schur $Q$ expansion of $Q_D$ has all strictly smaller coefficients than those of $Q_E$, and so $Q_E-Q_D$ is Schur $Q$-positive.
\end{proof}

With Lemma \ref{lem:pos-diff} in mind, we can compare the Schur $Q$ expansions of frayed ribbons to those of ordinary ribbons, by moving all but the bottommost row of a frayed ribbon $D$ one step to the right to form a ribbon $R$.  (We can alternatively think of this as moving the bottommost box of $D$ one step to the left.)  Then we have that $Q_R-Q_D$ is Schur $Q$ positive by Lemma \ref{lem:pos-diff}.  

In \cite{BarekatVanWilligenburg}, Barekat and van Willigenburg conjecture a necessary and sufficient condition for when two ribbon Schur $Q$ functions $Q_R$ and $Q_{R'}$ are equal.  Given two such equal functions $Q_R=Q_{R'}$, if Conjecture \ref{conj:frayed} holds, we would then be able to conclude that the differences $Q_R-Q_D$ and $Q_{R'}-Q_{D'}$ are \textit{distinct} Schur $Q$-positive symmetric functions, where $D,D'$ are the frayed ribbons that map to $R,R'$ respectively under shifting the bottom box to the left.

We give an example of this phenomenon below.

\begin{example}
Using the tools of \cite{BarekatVanWilligenburg}, the pair of ribbons $R,R'$ shown below have equal Schur $Q$ functions:
$$\begin{ytableau}
  \none & \none & \none & \none & \empty & \empty \\
  \none & \none & \none & \none & \empty \\
  \none & \none & \empty & \empty & \empty \\
  \empty & \empty & \empty \\
   \empty & \none
\end{ytableau}\hspace{2.5cm}
\begin{ytableau}
   \none & \none & \none & \empty & \empty \\
   \none & \none & \none & \empty \\
   \none & \none & \empty & \empty \\
   \none & \none & \empty \\
  \empty & \empty & \empty \\
   \empty & \none
\end{ytableau}$$
(These may be generated as the compositions $(1,1)\bullet (1,3,1)$ and $(2)\bullet (1,3,1)$ in the notation of \cite{BarekatVanWilligenburg}).  
However, the corresponding frayed ribbons $D,D'$ are shown below:
$$\begin{ytableau}
  \none & \none & \none & \none & \empty & \empty \\
  \none & \none & \none & \none & \empty \\
  \none & \none & \empty & \empty & \empty \\
  \empty & \empty & \empty \\
  \none & \empty 
\end{ytableau}\hspace{2.5cm}
\begin{ytableau}
   \none & \none & \none & \empty & \empty \\
   \none & \none & \none & \empty \\
   \none & \none & \empty & \empty \\
   \none & \none & \empty \\
  \empty & \empty & \empty \\
  \none & \empty 
\end{ytableau}$$
and do not have equal Schur $Q$ functions.  In this example, we have \begin{align*}
    Q_R-Q_D & =Q_{(4, 3, 2, 1)} + 8\cdot Q_{(5, 3, 2)} + 8\cdot Q_{(5, 4, 1)} + 18\cdot Q_{(6, 3, 1)} + 15\cdot Q_{(6, 4)} \\ & \qquad + 11\cdot Q_{(7, 2, 1)} + 21\cdot Q_{(7, 3)} + 13\cdot Q_{(8, 2)} + 4\cdot Q_{(9, 1)} + Q_{(10)}
\end{align*} and \begin{align*}
    Q_{R'}-Q_{D'} & =Q_{(4, 3, 2, 1)} + 10\cdot Q_{(5, 3, 2)} + 11\cdot Q_{(5, 4, 1)} + 20\cdot Q_{(6, 3, 1)} + 17\cdot Q_{(6, 4)} \\ & \qquad + 11\cdot Q_{(7, 2, 1)} + 21\cdot Q_{(7, 3)} + 13\cdot Q_{(8, 2)} + 4\cdot Q_{(9, 1)} + Q_{(10)}
\end{align*} where \begin{align*}
Q_R=Q_{R'}&=4\cdot Q_{(4, 3, 2, 1)} + 34\cdot Q_{(5, 3, 2)} + 34\cdot Q_{(5, 4, 1)} + 56\cdot Q_{(6, 3, 1)} + 45\cdot Q_{(6, 4)}\\ &\qquad + 24\cdot Q_{(7, 2, 1)} + 45\cdot Q_{(7, 3)} + 21\cdot Q_{(8, 2)} + 5\cdot Q_{(9, 1)} + Q_{(10)}.
\end{align*}
\end{example}

Notice that, in the examples above, the coefficients in the differences $Q_R-Q_D$ are significantly smaller, and potentially easier to get a handle on combinatorially, than those of $Q_R$ itself.  These observations give rise to the following natural problem.

\begin{question}
 Let $R$ be a ribbon having exactly one square in the bottom row and more than one in the second to bottom row.  Let $D$ be the frayed ribbons formed by moving their bottom square one unit to the right.  What is the (positive) Schur $Q$ expansion $Q_D-Q_R$?
\end{question}

 Such an understanding of these expansions would also give a partial resolution to Conjecture \ref{conj:frayed} when the corresponding ribbons have equal Schur $Q$ functions.

\subsection{More Schur $Q$-positive differences and an inductive approach}

Finally, an understanding of the differences that arise when shifting rows by one step could also lead to an inductive approach for proving Conjecture \ref{conj:frayed}.  In particular, given a frayed ribbon $D$ with its last turn being an outer turn, define $D\cdot (r)$ to be the shape formed by adding a row of length $r$ to the upper right of shape $D$.  (See Figure \ref{fig:add-row}.)
\begin{figure}
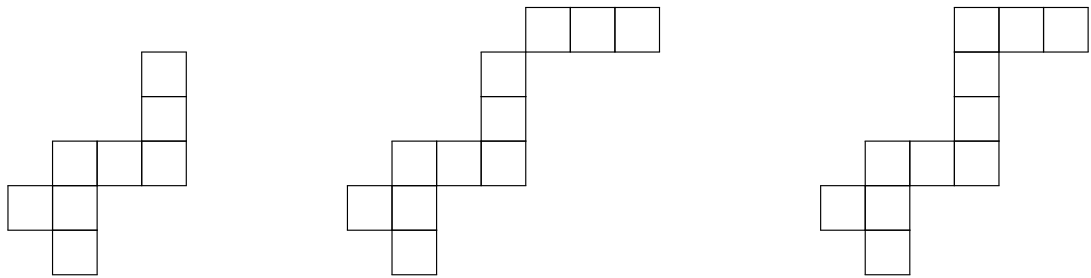

    \centering
    \begin{ytableau}
       \none & \none & \none & \none  \\
       \none & \none & \none & \empty \\
       \none & \none & \none & \empty \\
       \none & \empty & \empty & \empty \\
       \empty & \empty \\
       \none & \empty 
    \end{ytableau}\hspace{2cm}
    \begin{ytableau}
       \none & \none & \none & \none & \empty & \empty & \empty \\
       \none & \none & \none & \empty \\
       \none & \none & \none & \empty \\
       \none & \empty & \empty & \empty \\
       \empty & \empty \\
       \none & \empty 
    \end{ytableau}\hspace{2cm}
    \begin{ytableau}
       \none & \none & \none & \empty & \empty & \empty \\
       \none & \none & \none & \empty \\
       \none & \none & \none & \empty \\
       \none & \empty & \empty & \empty \\
       \empty & \empty \\
       \none & \empty 
    \end{ytableau}
    \caption{If $D$ is the frayed ribbon at left, then $D\cdot (3)$ is pictured at middle, and $\overline{D\cdot (3)}$ is shown at right.}
    \label{fig:add-row}
\end{figure}
Then since the shape is disconnected, $$Q_{D\cdot (r)}=Q_D\cdot Q_{(r)}.$$  Given any other frayed ribbon $E\neq D,D^a$, we similarly have $Q_{E\cdot (r)}=Q_E\cdot Q_r$.   Thus $Q_D\neq Q_E$ if and only if \begin{equation}\label{eq:dot}
    Q_{D\cdot (r)}\neq Q_{E\cdot(r)}
\end{equation} for any $r$.  

Now, define $\overline{D\cdot(r)}$ to be the shape formed by shifting the top row of $D\cdot(r)$ one step to the left (see Figure \ref{fig:add-row}.  By Lemma \ref{lem:pos-diff} we have that $Q_{D\cdot (r)}-Q_{\overline{D\cdot(r)}}$ is Schur $Q$-positive.  A sufficient understanding of these differences could then lead to a proof that $Q_{\overline{E\cdot(r)}}\neq Q_{\overline{D\cdot(r)}}$ starting from the inequality (\ref{eq:dot}), if we know that $Q_D\neq Q_E$.    We therefore state our final question as follows.

\begin{question}
  What is the (positive) Schur $Q$ expansion of $Q_{D\cdot (r)}-Q_{\overline{D\cdot(r)}}$ for a frayed ribbon $D$ and positive integer $r$?
\end{question}

Such an understanding may aid in a proof of Conjecture \ref{conj:frayed} by induction on the number of turns.
\bibliography{refs}
\bibliographystyle{plain}

\end{document}